\setlist[itemize]{leftmargin=2\parindent}
\setlist{topsep=0ex, itemsep=0.4ex, parsep=0ex}
\Crefname{equation}{}{}
\Crefname{enumi}{}{}
\Crefname{diagram}{Diagram}{Diagrams}
\theoremstyle{plain}
\newtheorem{theorem}                {Theorem}     [section]
\newtheorem{lemma}        [theorem] {Lemma}
\newtheorem{proposition}  [theorem] {Proposition}
\newtheorem{corollary}    [theorem] {Corollary}
\newtheorem*{theorem*}              {Theorem}
\theoremstyle{definition}
\newtheorem{example}      [theorem] {Example}
\newtheorem{definition}   [theorem] {Definition}
\newtheorem*{definition*} {Definition}
\theoremstyle{remark}
\newtheorem*{remark*}               {Remark}
\renewcommand{\@mkboth}[2]{}
\tikzset{
    pullback/.style={
        label={[inner sep=0pt, anchor=center]south east:{\lrcorner}}
    }
}
\tikzset{curve/.style={settings={#1},to path={(\tikztostart)
    .. controls ($(\tikztostart)!\pv{pos}!(\tikztotarget)!\pv{height}!270:(\tikztotarget)$)
    and ($(\tikztostart)!1-\pv{pos}!(\tikztotarget)!\pv{height}!270:(\tikztotarget)$)
    .. (\tikztotarget)\tikztonodes}},
    settings/.code={\tikzset{quiver/.cd,#1}
        \def\pv##1{\pgfkeysvalueof{/tikz/quiver/##1}}},
    quiver/.cd,pos/.initial=0.35,height/.initial=0}
\tikzset{tail reversed/.code={\pgfsetarrowsstart{tikzcd to}}}
\tikzset{2tail/.code={\pgfsetarrowsstart{Implies[reversed]}}}
\tikzset{2tail reversed/.code={\pgfsetarrowsstart{Implies}}}
\tikzset{no body/.style={/tikz/dash pattern=on 0 off 1mm}}
\newcommand{\fps@diagram}{tbp}
\newcounter{diagram}
\def\ftype@diagram{1}
\def\ext@diagram{lof}
\def\fnum@diagram{\diagramname\ \thediagram}
\def\diagramname{Diagram}
\newenvironment{diagram}{%
\@float{diagram}%
}{%
\end@float
}
\newenvironment{diagram*}{%
\@dblfloat{diagram}%
}{%
\end@dblfloat
}
\renewcommand{\leq}{\leqslant}
\DeclarePairedDelimiter{\paren}{(}{)}
\DeclarePairedDelimiter{\norm}{\|}{\|}
\DeclarePairedDelimiter{\ang}{\langle}{\rangle}
\DeclarePairedDelimiterXPP{\innerProd}[2]{\bgroup}{\langle}{\rangle}{\egroup}{#1 \delimsize\vert\mathopen{} #2}
\DeclarePairedDelimiterX{\setb}[2]{\lbrace}{\rbrace}{#1 \,\delimsize\vert\,\mathopen{} #2}
\DeclarePairedDelimiter{\set}{\lbrace}{\rbrace}
\newcommand{\cat}{\mathscr}
\newcommand{\C}{\cat{C}}
\newcommand{\D}{\cat{D}}
\newcommand{\pob}{\phantom{\cdot}}
\DeclareMathOperator{\Isom}{\cat{I}som}
\DeclareMathOperator{\Coisom}{\cat{C}oisom}
\DeclareMathOperator{\Map}{\cat{M}ap}
\DeclareMathOperator{\Rel}{\cat{R}el}
\DeclareMathOperator{\Set}{\cat{S}et}
\DeclareMathOperator{\Cat}{\cat{C}at}
\DeclareMathOperator{\Hilb}{\cat{H}ilb}
\DeclareMathOperator{\Mat}{\cat{M}at}
\newcommand{\FinProb}{\mathrm{\cat{F}in\cat{P}rob}_{\mathrm{sto}}}
\newcommand{\SBProb}{\mathrm{\cat{S}td\cat{B}or\cat{P}rob}_{\mathrm{sto}}}
\newcommand{\Prob}{\mathop{\mathrm{\cat{P}rob}_{\mathrm{sto}}}}
\newcommand{\FinProbDet}{\mathop{\mathrm{\cat{F}in\cat{P}rob}_{\mathrm{det}}}}
\newcommand{\SBProbDet}{\mathop{\mathrm{\cat{S}td\cat{B}or\cat{P}rob}_{\mathrm{det}}}}
\newcommand{\SBMeas}{\mathrm{\cat{S}td\cat{B}or\cat{M}eas}_{\mathrm{sto}}}
\DeclareMathOperator{\Inj}{\cat{I}nj}
\DeclareMathOperator{\Surj}{\cat{S}urj}
\DeclareMathOperator{\FinSet}{\cat{F}in\cat{S}et}
\DeclareMathOperator{\RegEpi}{\cat{R}eg\cat{E}pi}
\DeclareMathOperator{\RegMono}{\cat{R}eg\cat{M}ono}
\DeclareMathOperator{\MSurj}{\cat{MS}urj}
\DeclareMathOperator{\RegInd}{\cat{E}pi\cat{R}eg}
\DeclareMathOperator{\LexReg}{\cat{L}ex\cat{R}eg}
\DeclareMathOperator{\DilDag}{\cat{D}il}
\DeclareMathOperator{\TabAll}{\cat{T}ab\cat{A}ll}
\DeclareMathOperator{\Dag}{\cat{D}ag}
\DeclareMathOperator{\Ind}{\cat{I}nd}
\DeclareMathOperator{\Lex}{\cat{L}ex}
\DeclareMathOperator{\Ran}{Ran}
\DeclareMathOperator{\IndSq}{\mathbb{I}nd\mathbb{S}q}
\DeclareMathOperator{\Sq}{\mathbb{S}q}
\DeclareMathOperator{\ResIso}{\cat{R}es\cat{I}so}
\DeclareMathOperator{\Par}{\cat{P}ar}
\DeclareMathOperator{\PInj}{\cat{PI}nj}
\DeclareMathOperator{\FinPInj}{\cat{F}in\cat{PI}nj}
\DeclareMathOperator{\CRing}{\cat{CR}ing}
\DeclareMathOperator{\Heap}{\cat{H}eap}
\DeclareMathOperator{\Nom}{\cat{N}om}
\DeclareMathOperator{\Perm}{\mathrm{Sym}}
\DeclareMathOperator{\supp}{\mathrm{supp}}
\newcommand{\A}{\mathbb A}
\newcommand{\Reals}{\mathbb{R}}
\renewcommand{\Pr}{P}
\newcommand{\ind}{\mathbbm{1}}
\newcommand{\CPr}{\mathbb{P}}
\newcommand{\op}{\mathrm{op}}
\DeclareMathOperator{\Col}{Col}
\newcommand{\overbar}[1]{\,\overline{\!{#1}}}
\begin{document}

\title[Dagger categories of relations]{Dagger categories of relations\\[2ex]\scriptsize
The equivalence of dilatory dagger categories and\\epi-regular independence categories}
\date{\today}


\author{Matthew Di Meglio}
\address{University of Edinburgh}
\email{m.dimeglio@ed.ac.uk}

\author{Chris Heunen}
\email{chris.heunen@ed.ac.uk}

\author{Jean-Simon Pacaud Lemay}
\address{Macquarie University}
\email{js.lemay@mq.edu.au}

\author{Paolo Perrone}
\address{University of Oxford}
\email{paolo.perrone@cs.ox.ac.uk}

\author{Dario Stein}
\address{Radboud University Nijmegen}
\email{dario.stein@ru.nl}

\subjclass[2020]{18E08, 18M40, 60A05, 46M15, 18B10}

\begin{abstract}
    Several categories look like categories of relations, but do not fit the established theory of relations in regular categories. They include the category of surjective multivalued functions, the category of injective partial functions, the category of finite probability spaces and stochastic matrices, and the category of Hilbert spaces and linear contractions. To explain these anomalous examples, we develop a parallel theory of relations in \emph{epi-regular independence categories}. Just as regular categories correspond to tabular allegories, epi-regular independence categories correspond to \emph{dilatory dagger categories}. The equivalence maps epi-regular independence categories to their associated dagger category of relations, and dilatory dagger categories to their wide subcategory of coisometries.
\end{abstract}

\maketitle

\setcounter{tocdepth}{1}
\tableofcontents

\section{Introduction}

Many concepts in category theory are abstracted from categories of structure-preserving \textit{functions}, and so are not useful for describing categories of structure-preserving \textit{relations}. Several interrelated~\cite{knijnenburg:two-categories-relations} frameworks purportedly capture the essence of categories of relations: \textit{allegories}~\cite{freyd:allegories} (see also \cite[\S~A3]{johnstone:elephant}), \textit{bicategories of relations}~\cite{carboni:cartesian-bicategories-I}, and \textit{double categories of relations}~\cite{lambert:relations}. For all three kinds of categories, there is a simple characterisation of the \textit{maps} (the function-like morphisms), and an instance \(\D\) is canonically isomorphic to the category \(\Rel(\Map(\D))\) of \textit{relations} (isomorphism classes of jointly monic spans) in its wide subcategory \(\Map(\D)\) of maps exactly when every morphism in \(\D\) has a \textit{tabulator}\footnote{In the literature, the name \textit{tabulation} is also used for this concept. As this concept is analogous to our notion of \textit{dilator} (rather than our notion of \textit{dilation}), we will only use the term \textit{tabulator}.} (a suitably well-behaved representation as a span of maps). The tabulators in \(\D\) play two roles: they ensure that every morphism in \(\D\) is represented by a relation in \(\Map(\D)\), and they ensure that \(\Map(\D)\) is \textit{regular} so that composition of relations in \(\Map(\D)\) is defined and is associative~\cite{klein:relations-categories,kawahara:relations-categories-pullbacks,meisen:bicategories-relations-pullback,carboni:relazioni,cruciani:teoria-relazioni}. In fact, \(\Map\) and \(\Rel\) extend to an adjoint equivalence
\begin{equation}
    \label{e:equivalence-old}
    \begin{tikzcd}[column sep=large]
    \LexReg
        \ar[r, "\Rel", shift left=2]
        \ar[r, left adjoint]
        \&
    \TabAll
        \ar[l, "\Map", shift left=2]
    \end{tikzcd}
\end{equation}
between the categories \(\LexReg\)\footnote{\textit{Lex} is short for \textit{left exact}, which is synonymous with \textit{finitely complete}. The category of finitely complete categories is usually denoted \(\Lex\).} of finitely complete regular categories\footnote{\label{f:carboni}We adopt \citeauthor{carboni:bicategories-spans-relations}'s definition of regular category (See \cref{s:epi-regular}). In this definition, the only limits that are assumed to exist are pullbacks.} and \(\TabAll\) of \textit{unitary tabular} allegories~\cite[2.154]{freyd:allegories} (see also \cite[Thm.~A3.2.10]{johnstone:elephant}). There are similar adjoint equivalences for bicategories of relations~\cite[Cor.~3.6]{carboni:cartesian-bicategories-I} and double categories of relations~\cite[Thm.~10.2]{lambert:relations}.

There are, however, several categories that look like categories of relations (or corelations), but do not fit any of the axiomatic frameworks listed above.
\begin{itemize}
    \item Surjective multivalued functions (see \cref{x:bitotal:dagger}) can be represented as jointly monic spans of surjective functions. Their generalisation to regular categories (see \cref{s:bitotal-relations}) are the relations that are represented by spans of regular epimorphisms.
   
    \item Injective partial functions (see \cref{x:partial-injections:dagger}) can be represented as jointly epic cospans of injective functions.
    More generally, in rm-adhesive (or quasiadhesive) categories~\cite{garner:adhesive,lack:adhesive} (see~\cref{s:adhesive}), relations that are represented by spans of regular monomorphisms can also be represented as jointly epic cospans of regular monomorphisms. Even more generally, partial isomorphisms in well-behaved restriction categories~\cite{cockett2002restriction} (see \cref{s:partial-isomorphisms}) can be represented as jointly epic cospans of restriction monomorphisms.
    
    \item Stochastic maps between finite probability spaces (see \cref{x:finprob:dagger}) can be represented by jointly monic spans of surjective functions. More generally, measure-preserving Markov kernels between standard Borel probability spaces (see \cref{s:standard-borel}) can be represented by jointly monic spans of measure-preserving measurable functions, i.e., \textit{couplings} of random variables. Even more generally, in a Markov category with conditionals~\cite{fritz:synthetic} (see also~\cite{stein:RV}), measure-preserving morphisms of probability spaces (see \cref{s:general-probability-spaces}) can be represented by jointly monic spans of deterministic morphisms~\cite[Sec.~13]{fritz:synthetic}.

    \item Contractive real matrices (see \cref{x:mat:dagger}) can be represented as jointly epic cospans of matrices with orthonormal columns. More generally, contractions between Hilbert spaces (see \cref{s:contractions}) can be represented as jointly epic cospans of isometries~\cite[Prop.~4.24]{hadzihasanovic}. This is related to the theory of \emph{minimal unitary dilations} of contractions~\cite[Sec.~I.4]{nagyfoias}.
\end{itemize}
In all four examples, there is no obvious way to realise the legs of the representing spans (or cospans) as maps (or comaps) in any of the listed axiomatic frameworks. Our goal is to capture this different kind of category of relations axiomatically.

Each of these categories has a canonical \textit{dagger} (see~\cite{heunenvicary}), that is, a functorial and involutive choice of morphism \(r^\dagger \colon B \to A\) for each morphism \(r \colon A \to B\). Also the legs of the representing spans (or cospans) of their morphisms are \emph{coisometries} (or \textit{isometries}) with respect to these daggers. For each morphism in these dagger categories, not only is there a representing span of coisometries, but there is one that is terminal among all such representing spans. Such representing spans are called \textit{dilations}, and such terminal ones are called \textit{dilators}~\cite[Def.~7.12]{dimeglio:rcategories}. Dagger categories in which every morphism has a dilator will be called \textit{dilatory dagger categories}; they are the topic of \cref{s:dilatorydaggercats}.

The universal property of dilators is very similar to the universal property of tabulators in allegories~\cite[2.143]{freyd:allegories} (see also \cite[Lem.~A3.2.4]{johnstone:elephant}) and in bicategories of relations~\cite[Cor.~3.4]{carboni:cartesian-bicategories-I}. It is natural, then, to expect that a dilatory dagger category \(\D\) is canonically isomorphic to the category \(\Rel(\Coisom(\D))\) of relations in its wide subcategory \(\Coisom(\D)\) of coisometries. The complication is that \(\Coisom(\D)\) is \textit{not} a regular category—it does not have all pullbacks. This raises the question: what structure and properties of \(\Coisom(\D)\) ensure that composition of relations in \(\Coisom(\D)\) is defined and associative? The answer is that \(\Coisom(\D)\) is an \textit{epi-regular independence category}. Let us first discuss independence categories.

Although a cospan $(u,v)$ in \(\Coisom(\D)\) need not have a pullback, the dilator in \(\D\) of $v^\dag u$ looks like a pullback. While it is not terminal among \textit{all} spans that form commutative squares with the cospan, it \textit{is} terminal among certain distinguished such spans. For example, products of probability spaces are not categorical products, but they are terminal among the spans whose legs are conditionally independent. An \textit{independence category} is a category equipped with a well-behaved distinguished class of commutative squares called \textit{independent squares}. An \textit{independent pullback} of a cospan in an independence category is a span that is terminal among all spans that form an independent square with the cospan. These concepts, which are introduced in \cref{s:independence}, are an evolution of Simpson's \textit{systems of independent pullbacks}~\cite{simpson:equivalence-conditional-independence,simpson:category-theoretic-structure-independence} (see also \cite{bohm:crossedmodules,kenneypronk:span}).

Each span $(f,g)$ in $\Coisom(\D)$ factors through the dilator of $g f^\dag$ in \(\D\). Dilators are jointly monic (see~\cref{p:jointly-monic-is-dilator}). Also, all morphisms in $\Coisom(\D)$ are strong epimorphisms (see~\cref{p:coisometry-strong-epic}).
Hence $\Coisom(\D)$ satisfies the definition\cref{f:carboni} of regular category, but with pullbacks replaced by independent pullbacks. An independence category will be called \emph{epi-regular} if it satisfies these properties (see \cref{s:epi-regular}). The \textit{regular} part of the name \textit{epi-regular} alludes to the similarity with regular categories, while the \textit{epi} part of the name \textit{epi-regular} refers to the assumption that \textit{all} morphisms are strong epic, which is not necessarily the case in regular categories. Composition of relations in epi-regular independence categories is defined similarly to composition of relations in regular categories. In fact, if $\C$ is an epi-regular independence category, then $\Rel(\C)$ is a dilatory dagger category (see~\Cref{p:rel-dilatory}).

Our main result is that \(\Rel\) and \(\Coisom\) extend to a strict 2-adjoint 2-equivalence 
\begin{equation}\label{e:equivalence-new}
    \begin{tikzcd}[column sep=large]
    \RegInd
        \ar[r, "\Rel", shift left=2]
        \ar[r, left adjoint]
        \&
    \DilDag
        \ar[l, "\Coisom", shift left=2]
    \end{tikzcd}
\end{equation}
between the 2-category \(\RegInd\) of epi-regular independence categories and the 2\nobreakdash-category \(\DilDag\) of dilatory dagger categories. This mirrors the equivalence~\cref{e:equivalence-old} for allegories, and the analogous ones for bicategories and double categories of relations. We establish the adjoint equivalence of the underlying (1-)categories first, in \cref{s:one-equivalence}, deferring 2-categorical details to \cref{s:two-equivalence}.

The theory of dilatory dagger categories and epi-regular independence categories parallels the theory of allegories and regular categories so well that it is natural to wonder whether there is a more formal connection. 

\subsection*{Acknowledgements}
We thank Richard Garner for discussions about examples of dilators and the connection with tabulators. We also thank Marino Gran, Steve Lack, Tom Leinster, and Tim Van der Linden for their helpful feedback.

\section{Dilatory dagger categories}
\label{s:dilatorydaggercats}

This section recalls the definitions of dagger category, coisometry, and dilator. It also introduces all of the running examples, and uses them to illustrate the definitions.

\subsection{Dagger categories}
\label{s:dag-cats}

A \textit{dagger category} is a category equipped with a chosen morphism \(r^\dagger \colon B \to A\) for each morphism \(r \colon A \to B\) such that
\[1^\dagger = 1, \qquad (sr)^\dagger = r^\dagger s^\dagger, \qquad\text{and}\qquad r^{\dagger\dagger} = r\]
whenever these equations make sense. The operation $r \mapsto r^\dag$ is called the \emph{dagger}.

\begin{example}\label{x:bitotal:dagger}
    A \textit{multivalued function} \(r \colon A \to B\) between sets \(A\) and \(B\) is a function \(r\) from \(A\) to the power set of \(B\) such that, for each \(a \in A\), the set \(ra\) has at least one element. A multivalued function \(r \colon A \to B\) is \textit{surjective} if, for all \(b \in B\), there exists \(a \in A\) such that \(b \in ra\). Sets and surjective multivalued functions form a dagger category \(\MSurj\). The composite of morphisms \(r \colon A \to B\) and \(s \colon B \to C\) is their composite as relations:
    \[
        (sr)a = \bigcup_{b \in ra} sb.
    \]
    The dagger of a morphism \(r \colon A \to B\) is its converse as a relation:
    \[
        r^\dagger b = \setb[\big]{a \in A}{b \in ra}.
    \]
\end{example}

\begin{example}\label{x:partial-injections:dagger}
    A \textit{partial function} \(r \colon A \to B\) between sets \(A\) and \(B\) is a function \(r\) to~\(B\) from a subset \(\supp r\) of \(A\), called the \textit{support} of \(r\). We say that \(r\) is \textit{defined} at an element \(a\) of \(A\) if \(a \in \supp r\), and that \(r\) is \textit{undefined} at \(a\) otherwise. Sets and partial functions form a category \(\Par\). The composite of partial functions \(r \colon A \to B\) and \(s \colon B \to C\) is the composite of the function \(s \colon \supp s \to C\) with the restriction of the function \(r \colon \supp r \to B\) to the subset \(r^{-1} (\supp s)\) of its domain. In other words, \((sr) a = s(ra)\) whenever \(r\) is defined at \(a\) and \(s\) is defined at \(ra\).

    A partial function \(r \colon A \to B\) is \textit{injective} if \(ra = ra'\) implies \(a = a'\). Injective partial functions form a wide subcategory \(\PInj\) of \(\Par\). Each injective partial function \(r \colon A \to B\) corestricts to a bijection between its support and its range
    \[
        \Ran r = \,\bigcup_{\mathclap{a \in \supp r}}\, \set{ra}.
    \]
    Let \(\supp r^\dagger = \Ran r\), and let \(r^\dagger\) be the composite of the inverse of the bijection with the subset inclusion \(\supp r \subseteq A\). In other words, for all \(a \in A\) and \(b \in B\), the partial function \(r\) is defined at \(a\) and \(b = ra\) if and only if the partial function \(r^\dagger\) is defined at \(b\) and \(a = r^\dagger b\). This definition makes \(\PInj\) into a dagger category. Let \(\FinPInj\) denote the full (dagger) subcategory of \(\PInj\) spanned by the sets \([n]= \set{1, 2, \dots, n}\) for each natural number \(n\), where \([0] = \emptyset\).
\end{example}

\begin{example}\label{x:finprob:dagger}
    A (\textit{fully supported}) \textit{probability measure} on a finite set~\(A\) is 
    a function \(\Pr_A \colon A \to (0, 1]\) such that
	\[\sum_{a \in A} \Pr_A(a) \,=\, 1.\]
    For each \(U \subseteq A\), let
    \[\Pr_A(U) = \sum_{a \in U} \Pr_A(a).\]
    A (\textit{fully supported}) \textit{finite probability space} is a pairing \((A, \Pr_A)\) of a finite set~\(A\) with a probability measure \(\Pr_A\) on \(A\). A (\textit{measure-preserving}) \textit{stochastic matrix} \(r \colon (A, \Pr_A) \to (B, \Pr_B)\) is a function \(r(-|-) \colon B \times A \to [0, 1]\) such that
	\[\sum_{b \in B} r(b|a) \,=\, 1\]
	for all \(a \in A\), and
	\[\sum_{a \in A} r(b|a)\,\Pr_A(a) \,=\, \Pr_B(b)\]
	for all \(b \in B\).
	Probability spaces and stochastic matrices from a category \(\FinProb\). The composite of morphisms \(r \colon (A, \Pr_A) \to (B, \Pr_B)\) and \(s \colon (B, \Pr_B) \to (C, \Pr_C)\) is defined, for all \(a \in A\) and \(c \in C\), by
	\[(sr)(c|a) \,=\, \sum_{b \in B} s(c|b)\,r(b|a).\]
	The \textit{Bayesian inverse} of \(r\) is the stochastic matrix \(r^\dagger \colon (B, \Pr_B) \to (A, \Pr_A)\) defined by
	\[r^\dagger(a|b) \,=\, \frac{r(b|a)\,\Pr_A(a)}{\Pr_{B}(b)}.\]
	Bayesian inverses make \(\FinProb\) into a dagger category.
\end{example}

\begin{example}\label{x:mat:dagger}
	Natural numbers and real matrices form a dagger category \(\Mat\). The morphisms \(M \colon m \to n\) are the real matrices \(M\) with \(m\) columns and \(n\) rows. We write \(M_{ji}\) for the entry of \(M\) in column \(i \in [m]\) and row \(j \in [n]\). Composition is matrix multiplication, and the dagger is matrix transposition. A matrix \(M \colon m \to n\) is \textit{contractive} if $\norm{Mx} \leq \norm{x}$ for all $x \in \Reals^m$, where \(\norm{x}\) denotes the Euclidean norm of~\(x\). Equivalently, \(M\) is contractive if its operator norm \(\norm{M}_{\op}\) is at most \(1\). Contractive matrices form a wide subcategory $\Mat_{\leq 1}$ of $\Mat$. To prove that it is actually a \textit{dagger} subcategory, recall~\cite[eq.~(1.18)]{bhatia:positivedefinitematrices} that \(\norm{M^\dagger}_{\op} = \norm{M}_{\op}\).
\end{example}

A \emph{dagger functor} is a functor between dagger categories that preserves daggers. The next example is a finite analogue of the dagger functor \(\ell^2\)~\cite{heunen:ltwo}.

\begin{example}
\label{x:l2-functor}
The dagger functor \(\ell^2 \colon \FinPInj \to \Mat_{\leq 1}\) is defined as follows:
\begin{itemize}
    \item for each natural number \(n\),
    \[\ell^2 [n] = n;\]
    \item for each injective partial function \(r \colon [m] \to [n]\), each \(i \in [m]\), and each \(j \in [n]\),
    \[
        (\ell^2 r)_{ji} =
        \begin{cases}
            1 &\text{if \(i \in \supp r\) and \(j = ri\), and}\\
            0 &\text{otherwise.}
        \end{cases}
    \]
\end{itemize}
\end{example}

\noindent Dagger categories and dagger functors form a category \(\Dag\).

\subsection{Coisometries}
\label{s:coisom}

A morphism \(f\) in a dagger category is \textit{isometric} if \(f^\dagger f=1\), \textit{coisometric} if \(ff^\dagger = 1\), and \textit{unitary} if it is isometric and coisometric (in which case \(f^{-1} = f^\dagger\)). Isometric morphisms are also called \textit{isometries}, coisometric morphisms are also called \textit{coisometries}, and unitary morphisms are isometric isomorphisms. The isometries and coisometries in a dagger category~\(\D\) form wide subcategories \(\Isom(\D)\) and \(\Coisom(\D)\), respectively. Every dagger functor $G \colon \D \to \D'$ restricts to functors $\Isom(G) \colon \Isom(\D) \to \Isom(\D')$ and $\Coisom(G) \colon\Coisom(\D) \to \Coisom(\D')$. 

\begin{example}\label{x:bitotal:coisom}
	Continuing \cref{x:bitotal:dagger}, a morphism $f \colon A \to B$ in $\MSurj$ is coisometric exactly when it is \textit{single valued}, that is, when, for each \(a \in A\), the set \(fa\) has at most one element (see~\cite[Ex.~3.2.2]{heunen:thesis}). Hence the category \(\Coisom(\MSurj)\) is canonically isomorphic to the category \(\Surj\) of sets and surjective functions.
\end{example}

\begin{example}\label{x:partial-injections:isom}
	Continuing \cref{x:partial-injections:dagger}, a morphism \(f \colon A \to B\) in \(\PInj\) is isometric exactly when \(\supp f = A\), that is, when \(f\) is \textit{total}. Hence \(\Isom(\PInj)\) is isomorphic to the category \(\Inj\) of sets and injective functions.
\end{example}

\begin{example}\label{x:finprob:coisom}
	Continuing \cref{x:finprob:dagger}, a morphism \(f \colon (\Omega, \Pr) \to (A, \Pr_A)\) in \(\FinProb\) is coisometric if and only if it is \textit{deterministic}, that is, if and only if \(f(a|\omega) \in \set{0, 1}\) for all \(\omega \in \Omega\) and \(a \in A\). Let \(\FinProbDet = \Coisom(\FinProb)\).

    A \textit{random variable} on a finite probability space \((\Omega, \Pr)\) valued in a finite set \(A\) is (without loss of generality) a surjective function \(X \colon \Omega \to A\). The \textit{pushforward} of \(\Pr\) along \(X\) is the probability measure \(X_*\Pr\) on \(A\) defined by
    \[(X_*\Pr)(a) = \Pr(X^{-1}\set{a}).\]
    The \textit{delta matrix} of \(X\) is the morphism \(\delta_X \colon (\Omega, \Pr) \to (A, X_*\Pr)\) in \(\FinProbDet\) defined, for all \(\omega \in \Omega\) and \(a \in A\), by
    \[
        \delta_X(a|\omega) = \begin{cases}
            1 &\text{if \(a = X(\omega)\), and}\\
            0 &\text{otherwise.}
        \end{cases}
    \]
    For each morphism \(f \colon (\Omega, \Pr) \to (A, \Pr_A)\) in \(\FinProbDet\), there is a random variable \(X\) on \((\Omega, \Pr)\) valued in \(A\) such that \(\Pr_A = X_*\Pr\) and \(f = \delta_X\).

    Given random variables \(X\) and \(Y\) on \((\Omega, \Pr)\) valued, respectively, in finite sets \(A\) and \(B\), the stochastic matrix \(\delta_Y {\delta_X}^\dagger\) encodes the conditional distribution of \(Y\) given~\(X\). Indeed, for all \(a \in A\) and \(b \in B\),
    \begin{multline*}
        (\delta_Y {\delta_X}^\dagger)(b|a)
        \,=\, \sum_{\omega \in \Omega} \delta_Y(b|\omega) \, {\delta_{X}}^\dagger(\omega|a)
        \,=\, \sum_{\omega \in \Omega} \delta_Y(b|\omega) \, \frac{\delta_X(a|\omega)\, \Pr(\omega)}{(X_*\Pr)(a)}
        \\=\, \frac{\Pr(X^{-1}\set{a} \mathrel{\cap} Y^{-1} \set{b})}{\Pr(X^{-1}\set{a})}
        \,=\, \CPr[Y \mathbin{=} b|X \mathbin{=} a].
    \end{multline*}
\end{example}

\begin{example}\label{x:mat:isom}
	Continuing \cref{x:mat:dagger}, a matrix $A \colon m\to n$ is isometric exactly when one of the following equivalent conditions holds:
    \begin{itemize}
        \item \(\norm{Ax} = \norm{x}\) for all \(x \in \Reals^m\), where \(\norm{x}\) is the Euclidean norm of \(x\);
        \item \(Ax \cdot Ay = x \cdot y\) for all \(x, y \in \Reals^m\), where \(x \cdot y\) is the dot product of \(x\) and \(y\); and,
        \item the columns of \(A\) are \textit{orthonormal}, that is, they are pairwise orthogonal and have Euclidean norm \(1\).
    \end{itemize}
    The matrices
	\[
	\begin{bmatrix}
		1 & 0 \\
		0 & 1 \\
		0 & 0
	\end{bmatrix},
	\qquad
	\begin{bmatrix}
		1/\sqrt{2} & -1/\sqrt{2} \\
		1/\sqrt{2} & \phantom{-}1/\sqrt{2}
	\end{bmatrix},
	\qquad\text{and}\qquad
	\begin{bmatrix}
		1/2 & -1/2 & -1/2\\
		1/2 & -1/2 & \phantom{-}1/2 \\
		1/2 & \phantom{-}1/2 & -1/2 \\
		1/2 & \phantom{-}1/2 & \phantom{-}1/2
	\end{bmatrix}
	\]
    are isometric. The number of columns of an isometric matrix is no more than the number of rows of the matrix. As coisometric matrices are transposes of isometric matrices, they are the matrices whose row vectors are orthonormal. Geometrically, they correspond to the surjective linear maps that restrict to surjective maps between the corresponding Euclidean unit balls. Every isometric matrix is contractive, and the same is true of every coisometric matrix. Let \(\Mat_1 = \Isom(\Mat) = \Isom(\Mat_{\leq 1})\).
\end{example}

\subsection{Dilators}
\label{s:dilators}

Before recalling the definition of dilators~\cite[Def.~7.12]{dimeglio:rcategories}, let us establish notation for spans and cospans. A \textit{span} \((X, f, g)\) in a category from an object \(A\) to an object \(B\) consists of an object \(X\) and morphisms \(f \colon X \to A\) and \(g \colon X \to B\). Dually a \textit{cospan} \((X, f, g)\) from \(A\) to \(B\) consists of an object \(X\) and morphisms \(f \colon A \to X\) and \(g \colon B \to X\). When the object \(X\) is obvious from context, or is irrelevant to our argument, we will omit it and merely write \((f, g)\). Similarly, we will often omit objects from displayed diagrams.

\begin{definition}\label{d:dilation}
    Let \(r \colon A \to B\) be a morphism in a dagger category. A \textit{dilation} of \(r\) is a span \((f, g)\) from \(A\) to \(B\) such that \(f\) and \(g\) are coisometries and \(g {f}^\dagger = r\). A \textit{dilator} of \(r\) is a terminal dilation of \(r\), that is, it is a dilation \((R, r_1, r_2)\) of \(r\), such that for all dilations \((X, f, g)\) of \(r\), there is a unique coisometry \(e\) that makes the diagram
	\[
	    \begin{tikzcd}
	        \&
	    X
	        \arrow[dl, "f" swap]
	        \arrow[dr, "g"]
	        \arrow[d, "e"]
	        \&
	    \\
	    A
	        \&
	    R
	        \arrow[l, "r_1"]
	        \arrow[r, "r_2" swap]
	        \&
	    B
	    \end{tikzcd}
	\]
    commutative. Dually, a \textit{codilation} of \(r\) is a cospan \((f, g)\) from \(A\) to \(B\) such that \(f\) and \(g\) are isometries and \({g}^\dagger f = r\), and a \textit{codilator} of \(r\) is an initial codilation of \(r\). Dilators and codilators are unique up to isometric isomorphism.

    A span \((r_1, r_2)\) is a dilator of \(r\) if and only if the cospan \(({r_1}^\dagger, {r_2}^\dagger)\) is a codilator of \(r\). Write \((A \boxplus_r B, p_1, p_2)\) for a chosen dilator of \(r\), and let \(i_1 = {p_1}^\dagger\) and \(i_2 = {p_2}^\dagger\), so that \((A \boxplus_r B, i_1, i_2)\) is a chosen codilator of \(r\).

    A dagger category is \textit{dilatory} if every morphism has a dilator, or, equivalently, if every morphism has a codilator. A dagger functor between dilatory dagger categories is \emph{dilatory} is if it preserves dilators, or, equivalently, if it preserves codilators. Write $\DilDag$ for the category of dilatory dagger categories and dilatory dagger functors.
\end{definition}

\begin{example}\label{x:bitotal:dilator}
	Continuing \cref{x:bitotal:coisom}, the dagger category $\MSurj$ is dilatory. The canonical dilator of a morphism \(r \colon A \to B\) is its \textit{graph}, which is defined by
    \begin{gather*}
        A \boxplus_r B = \setb[\big]{(a, b) \in A \times B}{b \in ra}, \\
        p_1(a, b) = a, \qquad\text{and}\qquad
        p_2(a, b) = b.
    \end{gather*}
\end{example}

\begin{example}\label{x:partial-injections:dilator}
	Continuing \cref{x:partial-injections:isom}, the dagger category $\PInj$ is dilatory. The canonical codilator of a morphism \(r \colon A \to B\) is defined by the equations
    \begin{gather*}
        A \boxplus_r B = (A \backslash \supp r) \sqcup B,\\
        i_1 a = \begin{cases}
            ra  &\text{if \(a \in \supp r\),}\\
            a   &\text{otherwise,}
        \end{cases}
        \qquad\text{and}\qquad
        i_2 b = b.
    \end{gather*}
    This cospan is the pushout in \(\Set\) of the function \(r \colon \supp r \to B\) along the inclusion \(\supp r \subseteq A\). It is also the pushout in \(\Par\) of the partial function \(r \colon A \to B\) along the partial identity function \(A \to A\) that is only defined on \(\supp r\).
\end{example}

\begin{example}\label{x:finprob:dilator}
	Continuing \cref{x:finprob:coisom}, the dagger category \(\FinProb\) is dilatory.
    
    Consider a morphism \(r \colon (A, \Pr_A) \to (B, \Pr_B)\). Let
    \[
        \supp r = \setb[\big]{(a, b) \in A \times B}{r(b|a) \neq 0},
    \]
    and define \(p_1 \colon \supp r \to A\) and \(p_2 \colon \supp r \to B\) by
    \[
        p_1(a, b) = a \qquad\text{and}\qquad p_2(a, b) = b
    \]
    for all \((a, b) \in \supp r\). Let \(\Pr_A \otimes r\) be the probability measure on \(\supp r\) defined by
    \[
        (\Pr_A \otimes r)(a,b) \,=\, r(b|a)\,\Pr_A(a)
    \]
    for all \((a, b) \in \supp r\). Then
    \[
        (p_1)_*(\Pr_A \otimes r) = \Pr_A
        \qquad\text{and}\qquad
        (p_2)_*(\Pr_A \otimes r) = \Pr_B.
    \]
    
    The span
    \[
        \begin{tikzcd}[cramped]
            (A, \Pr_A)
                \&
            (\supp r, \Pr_A \otimes r)
                \arrow[l, "\delta_{p_1}" swap]
                \arrow[r, "\delta_{p_2}"]
                \&
            (B, \Pr_B)
        \end{tikzcd}
    \]
    is a dilator of \(r\). Indeed, it is a dilation of \(r\) because
    \begin{align*}
        \delta_{p_2} {\delta_{p_1}}^\dagger (b|a)
        \,&=\, \,\sum_{(a',b')\in \supp r} \,\delta_{p_2}(b|a',b') \, {\delta_{p_1}}^\dagger(a',b'|a)
        \\&=\,\, \sum_{(a',b')\in \supp r}\, \delta_{p_2}(b|a',b')\,\frac{{\delta_{p_1}}(a|a',b') \, (\Pr_A \otimes r)(a', b')}{\Pr_A(a)}
        \\&=\, \frac{(\Pr_A \otimes r)(a, b)}{\Pr_A(a)}
        \\&=\, r(b|a)
    \end{align*}
    for all \(a \in A\) and \(b \in B\).

    For universality, consider another dilation of \(r\). It is necessarily of the form
    \[
        \begin{tikzcd}[cramped]
            (A, \Pr_A)
                \&
            (\Omega, \Pr)
                \arrow[l, "\delta_{X}" swap]
                \arrow[r, "\delta_{Y}"]
                \&
            (B, \Pr_B)
        \end{tikzcd}
    \]
    for some random variables \(X\) and \(Y\) on \((\Omega, \Pr)\) valued, respectively, in \(A\) and \(B\), such that \(X_*\Pr = \Pr_A\) and \(Y_*\Pr = \Pr_B\). Define \(Z \colon \Omega \to \supp r\) by
    \[
        Z(\omega) = \paren[\big]{X(\omega), Y(\omega)}.
    \]
    Then \(\delta_Z\) is the unique coisometry \(h \colon (\Omega, \Pr) \to (\supp r, \Pr_A \otimes r)\) such that
    \[
        \begin{tikzcd}[row sep=large]
            \&
            (\Omega, \Pr)
                \arrow[dl, "\delta_{X}" swap]
                \arrow[dr, "\delta_{Y}"]
                \arrow[d, "h"]
            \&
        \\
            (A, \Pr_A) \&
            (\supp r, \Pr_A \otimes r)
                \arrow[l, "\delta_{p_1}"]
                \arrow[r, "\delta_{p_2}" swap]
                \&
            (B, \Pr_B)
        \end{tikzcd}
    \]
    is commutative.
\end{example}

\begin{example}\label{x:mat:dilator}
    Continuing \cref{x:mat:isom}, the category $\Mat_{\leq 1}$ is dilatory. Let us construct the codilator of a morphism \(R \colon m \to n\). The matrix \(1 - R^\dagger R\) is positive semidefinite~\cite[Prop.~1.3.2 and Thm.~1.3.3]{bhatia:positivedefinitematrices}, so it has a full-rank Cholesky factorisation~\cite{canto:cholesky-factorization}. Letting \(d\) be the rank of \(1 - R^\dagger R\), this means that there is an upper row-echelon matrix \(E \colon m \to d\) whose rows all have strictly positive leading entries such that \(E^\dagger E = 1 - R^\dagger R\). By back substitution, there is a matrix $M \colon d \to m$ such that $EM=1$.
    The cospan
	\[
	\begin{tikzcd}[cramped, sep=large]
		m
		\&
		d + n
		\arrow[from=l, "\begin{bsmallmatrix}
            E \\
            R
        \end{bsmallmatrix}"]
		\arrow[from=r, "\begin{bsmallmatrix}
            0\vphantom{E}\\
            1\vphantom{R}
        \end{bsmallmatrix}" swap]
		\&
		n
	\end{tikzcd}
	\]
    is a codilator of \(R\). Indeed, it is a codilation of \(R\), and, if
    \[
    \begin{tikzcd}[cramped, sep=large]
        m
        \&
        p
        \arrow[from=l, "A"]
        \arrow[from=r, "B" swap]
        \&
        n
    \end{tikzcd}
    \]
    is another codilation of \(R\), then
    \[
        \begin{bmatrix} (A-BR)M & B\end{bmatrix}
    \]
    is the unique matrix \(C \colon d + n \to p\) such that
    \[
        C \begin{bmatrix} E \\ R \end{bmatrix} = A,\qquad
        C \begin{bmatrix} 0 \\ 1 \end{bmatrix} = B,\qquad\text{and}\qquad
        C^\dag C=1.
    \]
    To prove the first equality, observe that
    \[
        \begin{bmatrix} (A-BR)M & B \end{bmatrix}\begin{bmatrix} E \\ R \end{bmatrix} - A = (BR - A)(1 - ME),
    \]
    and show instead (see~\cite[eq.~(1.19)]{bhatia:positivedefinitematrices}) that
    \[
        (1 - ME)^\dagger (BR - A)^\dagger (BR - A)(1 - ME) = (1 - ME)^\dagger E^\dagger E (1 - ME) = 0.
    \]
\end{example}

\begin{example}
    Continuing \Cref{x:l2-functor}, the dagger functor \(\ell^2 \colon \FinPInj \to \Mat_{\leq 1}\) is dilatory. Consider an injective partial function \(r \colon [m] \to [n]\). Let \(k\) be the cardinality of \(\supp r\). By reindexing the domain and codomain of \(r\) if necessary, we may assume, without loss of generality, that \(\supp r = [k]\) for some \(k \leq m\) and \(ri = i\) for all \(i \in [k]\). From here, simply compare the dilator of \(r\) described in \cref{x:partial-injections:dilator} with the dilator of \(R = \ell^2(r)\) described in \cref{x:mat:dilator}. Observe that
    \[1 - R^\dagger R = \begin{bmatrix}0 & 0 \\ 0 & 1\end{bmatrix} \colon k + (m - k) \to k + (m - k),\]
    so its rank \(d\) is \(m - k\), and the matrices
    \[
        E = \begin{bmatrix}0 & 1\end{bmatrix} \colon k + d \to d
    \]
    and \(M = E^\dagger\) satisfy \(E^\dagger E = 1 - R^\dagger R\) and \(EM = 1\).
\end{example}

We will repeatedly use the following characterisation of dilators of coisometries.

\begin{lemma}
\label{p:dilator-coisometry}
For each coisometry \(f\) in a dagger category, if \(f\) has a dilator then the span \((1, f)\) is a dilator of \(f\).
\end{lemma}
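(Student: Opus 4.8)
The plan is to check first that $(1, f)$ is a dilation of $f$, then to use the assumed dilator to produce a comparison coisometry, show that it is unitary, and transport the universal property across it. For the dilation check, write $f \colon A \to B$. The identity $1_A$ is unitary and hence a coisometry, $f$ is a coisometry by hypothesis, and $f \, {1_A}^\dagger = f \, 1_A = f$, so $(A, 1_A, f)$ is a dilation of $f$ in the sense of \Cref{d:dilation}.

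Now let $(R, r_1, r_2)$ be a dilator of $f$, which exists by hypothesis. Its universal property, applied to the dilation $(1, f)$, yields a unique coisometry $e \colon A \to R$ with $r_1 e = 1_A$ and $r_2 e = f$. The crucial point is that $e$ is unitary. Since $e$ is a coisometry, $e e^\dagger = 1_R$, and so
\[
    e r_1 = (e r_1)(e e^\dagger) = e (r_1 e) e^\dagger = e \, e^\dagger = 1_R.
\]
Together with $r_1 e = 1_A$, this shows that $e$ is invertible with inverse $r_1$; a coisometric isomorphism is unitary, so in particular $r_1 = e^{-1} = e^\dagger$.

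It remains to verify the universal property of $(1, f)$. Given any dilation $(X, p, q)$ of $f$, the equation $1_A \, d = p$ forces the mediating morphism to be $d = p$, so uniqueness is automatic, and it suffices to show that $p$ works, i.e.\ that $f p = q$. Let $c \colon X \to R$ be the unique coisometry with $r_1 c = p$ and $r_2 c = q$ supplied by the dilator. Since $r_1 = e^\dagger$, we have $p = r_1 c = e^\dagger c$, and multiplying on the left by $e$ gives $c = e \, e^\dagger c = e p$; hence $q = r_2 c = r_2 e p = f p$, as needed. The only mildly delicate step is recognising that the comparison coisometry $e$ must be unitary; once the identity $e r_1 = 1_R$ is established, the rest is a short computation with the coisometry conditions, so I anticipate no further obstacle.
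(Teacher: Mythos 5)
Your proof is correct and takes essentially the same route as the paper's: both apply the universal property of the assumed dilator to the dilation $(1,f)$, obtain the comparison coisometry $e$, and show it is unitary (the paper via $f_1 = f_1 e e^\dagger = e^\dagger$, you via $e r_1 = 1_R$). The only difference is in the finish—the paper invokes that the dilator property transfers along the unitary $e$, while you verify the universal property of $(1,f)$ explicitly—which is a slightly longer but equally valid conclusion of the same argument.
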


\begin{proof}
Let \((f_1, f_2)\) be a dilator of \(f\). As \((1, f)\) is a dilation of \(f\), there is a unique coisometry \(e\) such that the diagram
\[
    \begin{tikzcd}[cramped, sep=large]
            \&
        \pob
            \arrow[dl, "1" swap]
            \arrow[dr, "f"]
            \arrow[d, "e"]
            \&
        \\
        \pob
            \&
        \pob
            \arrow[l, "f_1"]
            \arrow[r, "f_2" swap]
            \&
        \pob
    \end{tikzcd}
\]
is commutative. Then
\(f_1 = f_1 ee^\dagger = e^\dagger\).
As \(e\) is both isometric and coisometric, it is unitary, and so \((1, f)\) is another dilator of \(f\).
\end{proof}

\section{Epi-regular independence categories}
\label{s:epiregularindependencecats}

Independence categories and independent pullbacks are respectively introduced in \cref{s:independence,s:ind-pull}. \cref{s:factorisation} is about factorisations of spans. Epi-regularity is the topic of \cref{s:epi-regular}.

\subsection{Independence categories}
\label{s:independence}

The following definitions are inspired by \citeauthor{simpson:equivalence-conditional-independence}'s \textit{systems of independent pullbacks}~\cite[Def.~6.1]{simpson:equivalence-conditional-independence}.

\begin{definition}\label{d:independence}
An \textit{independence category} is a category equipped with a predicate~\(\perp\), pronounced ``is independent'', on its squares such that
\begin{enumerate}[label={(I\arabic*)}]
    \item \label{a:independence:1}
    \begin{tikzcd}[cramped]
        \pob
            \arrow[r, "g"]
            \arrow[d, "f" swap]
            \arrow[dr, independent]
            \&
        \pob
            \arrow[d, "v"]
        \\
        \pob
            \arrow[r, "u" swap]
            \&
        \pob
    \end{tikzcd} implies \(uf = vg\),

    \item\label{a:independence:2}
    \begin{tikzcd}[cramped]
        \pob
            \arrow[r, "1"]
            \arrow[d, "f" swap]
            \arrow[dr, independent]
            \&
        \pob
            \arrow[d, "f"]
        \\
        \pob
            \arrow[r, "1" swap]
            \&
        \pob
    \end{tikzcd},

    \item\label{a:independence:3}
    \begin{tikzcd}[cramped]
        \pob
            \arrow[r, "a"]
            \arrow[d, "f" swap]
            \arrow[dr, independent]
            \&
        \pob
            \arrow[d, "g"]
        \\
        \pob
            \arrow[r, "u" swap]
            \&
        \pob
    \end{tikzcd}
    and
    \begin{tikzcd}[cramped]
        \pob
            \arrow[r, "b"]
            \arrow[d, "g" swap]
            \arrow[dr, independent]
            \&
        \pob
            \arrow[d, "h"]
        \\
        \pob
            \arrow[r, "v" swap]
            \&
        \pob
    \end{tikzcd}
    imply
    \begin{tikzcd}[cramped]
        \pob
            \arrow[r, "a"]
            \arrow[d, "f" swap]
            \arrow[drr, independent]
            \&
        \pob
            \arrow[r, "b"]
            \&
        \pob
            \arrow[d, "h"]
        \\
        \pob
            \arrow[r, "u" swap]
            \&
        \pob
            \arrow[r, "v" swap]
            \&
        \pob
    \end{tikzcd},
    
    \item\label{a:independence:4}
    \begin{tikzcd}[cramped]
        \pob
            \arrow[r, "g"]
            \arrow[d, "f" swap]
            \arrow[dr, independent]
            \&
        \pob
            \arrow[d, "v"]
        \\
        \pob
            \arrow[r, "u" swap]
            \&
        \pob
    \end{tikzcd}
    implies
    \begin{tikzcd}[cramped]
        \pob
            \arrow[r, "f"]
            \arrow[d, "g" swap]
            \arrow[dr, independent]
            \&
        \pob
            \arrow[d, "u"]
        \\
        \pob
            \arrow[r, "v" swap]
            \&
        \pob
    \end{tikzcd},

    \item\label{a:independence:5}
    \begin{tikzcd}[cramped]
        \pob
            \arrow[r, "f"]
            \arrow[d, "f" swap]
            \arrow[dr, independent]
            \&
        \pob
            \arrow[d, "1"]
        \\
        \pob
            \arrow[r, "1" swap]
            \&
        \pob
    \end{tikzcd}.
\end{enumerate} 

A \textit{co-independence category}\todo{Maybe we could use the terms orthocategory, orthogonal square and orthopullback instead of co-independence category, etc.? If so, then we should swap the symbols for independence and orthogonality} is the opposite of an independence category, that is, it is a category equipped with a predicate \(\perp\), pronounced ``is co-independent'', on its squares that satisfies the self-dual axioms \cref*{a:independence:1} to \cref*{a:independence:4}, as well as the reflection
\[
    \begin{tikzcd}[cramped, sep=large]
	\pob
	\arrow[r, "1"]
	\arrow[d, "1" swap]
	\arrow[dr, coindependent]
	\&
	\pob
	\arrow[d, "f"]
	\\
	\pob
	\arrow[r, "f" swap]
	\&
	\pob
	\end{tikzcd}
\]
of axiom \cref*{a:independence:5}.

An \emph{independence functor} is a functor between independence categories that preserves independent squares. Write $\Ind$ for the category of independence categories and independence functors.
\end{definition}

Axiom \cref*{a:independence:3} and \cref*{a:independence:4} are the same as Simpson's axioms (IP3) and (IP2) respectively. Axiom \cref*{a:independence:2} and \cref*{a:independence:5} are, given the other axioms, equivalent to Simpson's axiom (IP1). 

For the reader familiar with double categories, the independence category axioms ensure that the independent squares of \(\C\) form a wide double subcategory \(\IndSq(\C)\) of the double category \(\Sq(\C)\) of commutative squares in \(\C\).

\begin{proposition}\label{p:dildag-indep}
	If \(\D\) is a dagger category, then \(\Coisom(\D)\) is an independence category when equipped with the predicate \(\perp\) on its squares defined by
    \begin{equation}\label{e:ind}
        \begin{tikzcd}[cramped, sep=large]
            \pob
            \arrow[r, "g"]
            \arrow[d, "f" swap]
            \arrow[dr, independent]
            \&
            \pob
            \arrow[d, "v"]
            \\
            \pob
            \arrow[r, "u" swap]
            \&
            \pob
        \end{tikzcd}
        \qquad
        \iff
        \qquad
        uf = vg \quad\text{and}\quad
        gf^\dagger = v^\dagger u.
    \end{equation}
	Also, if $G \colon \D \to \D'$ is a dagger functor, then $\Coisom(G) \colon \Coisom(\D) \to \Coisom(\D')$ is an independence functor.
	In fact, $\Coisom$ is a functor $\Dag \to \Ind$.
\end{proposition}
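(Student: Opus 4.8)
The plan is to verify the five independence-category axioms \cref{a:independence:1,a:independence:2,a:independence:3,a:independence:4,a:independence:5} for the predicate \cref{e:ind} directly, tracking at each step which feature of the dagger is used, and then to check the two functoriality claims by transporting the defining equations through $G$.

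Since $\Coisom(\D)$ is already known to be a wide subcategory of $\D$, the only task for the object part is the five axioms. Axiom \cref*{a:independence:1} is immediate, as $uf = vg$ is literally one of the two conjuncts of \cref{e:ind}. Axioms \cref*{a:independence:2} and \cref*{a:independence:5} are single-square assertions that collapse to identities: for \cref*{a:independence:2} both conditions read $f = f$ and $f^\dagger = f^\dagger$; for \cref*{a:independence:5} the commutativity condition is trivial and the second condition becomes $f f^\dagger = 1$, which holds \emph{precisely because $f$ is a coisometry}. This is the one place where coisometricity, rather than mere dagger structure, is essential, so I expect \cref*{a:independence:5} to be the conceptually load-bearing check even though its computation is a single line.

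Next I would treat \cref*{a:independence:4}, which transposes a square across its diagonal. Its two required conditions are $vg = uf$, which is unchanged, and $f g^\dagger = u^\dagger v$, which follows from the hypothesis $g f^\dagger = v^\dagger u$ by applying the dagger and invoking $r^{\dagger\dagger} = r$ together with the contravariance of the dagger. For the pasting axiom \cref*{a:independence:3}, the composite square has top leg $ba$ and bottom leg $vu$; its commutativity condition $v u f = h b a$ follows by chaining $uf = ga$ with $vg = hb$, and its dagger condition $b a f^\dagger = h^\dagger v u$ follows by chaining $a f^\dagger = g^\dagger u$ with $b g^\dagger = h^\dagger v$. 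Both are routine substitutions, so I anticipate no obstacle here.

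Finally, for the functor claims I would use that $\Coisom(G)$ is the restriction of the dagger functor $G$ to coisometries, which is already established. To see that it preserves independent squares, apply $G$ to the defining equations: functoriality turns $uf = vg$ into $G(u)G(f) = G(v)G(g)$, and the dagger-functor property turns $g f^\dagger = v^\dagger u$ into $G(g)G(f)^\dagger = G(g f^\dagger) = G(v^\dagger u) = G(v)^\dagger G(u)$, which is exactly independence of the image square. Preservation of identities and composition is inherited from $\Coisom$ being a restriction, so $\Coisom \colon \Dag \to \Ind$ is a functor. The entire argument is a sequence of short verifications; the only genuinely substantive point is the appeal to $f f^\dagger = 1$ in \cref*{a:independence:5}.
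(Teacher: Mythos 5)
Your proof is correct and takes exactly the route the paper intends: the paper's own "proof" is the single sentence "The proof is straightforward; simply unfold the relevant definitions," and your verification of axioms (I1)--(I5) together with the transport of the two defining equations through a dagger functor is precisely that unfolding. Your observation that coisometricity is needed only for axiom (I5), where the condition becomes $ff^\dagger = 1$, is a correct and worthwhile point that the paper leaves implicit.
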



The proof is straightforward; simply unfold the relevant definitions.

For dagger categories \(\D\) that are dilatory (like all of our running examples), the definition of the independent squares in \(\Coisom(\D)\) can be simplified.

\begin{lemma}
\label{p:coisom-ind-commute}
In a dilatory dagger category, if a square
\[
    \begin{tikzcd}[cramped, sep=large]
        \pob
            \arrow[r, "g"]
            \arrow[d, "f" swap]
            \&
        \pob
            \arrow[d, "v"]
        \\
        \pob
            \arrow[r, "u" swap]
            \&
        \pob
    \end{tikzcd}
\]
of coisometries satisfies \(gf^\dagger = v^\dagger u\), then it is commutative (and thus independent).
\end{lemma}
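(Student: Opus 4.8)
The plan is to establish the commutativity $uf = vg$; the parenthetical assertion is then immediate, because by \cref{p:dildag-indep} the square is independent in $\Coisom(\D)$ precisely when $uf = vg$ holds together with the hypothesis $gf^\dagger = v^\dagger u$ (see \cref{e:ind}). So everything reduces to proving the single equation $uf = vg$, and I would not touch the dilator of $v^\dagger u$ at all.

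The heart of the argument is the observation that the span $(vg, uf)$, whose two legs both land in the common codomain $Z$ of $u$ and $v$, is a dilation of $1_Z$. To see this, first note that $uf$ and $vg$ are coisometries, being composites of coisometries. Taking the dagger of the hypothesis gives $fg^\dagger = u^\dagger v$, and multiplying on the left by $u$ and using $uu^\dagger = 1$ yields $ufg^\dagger = v$. Therefore
\[
    (uf)(vg)^\dagger = uf g^\dagger v^\dagger = v v^\dagger = 1_Z,
\]
so $(vg, uf)$ is indeed a dilation of $1_Z$.

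Now I would invoke dilatoriness: the identity $1_Z$ has a dilator, and by \cref{p:dilator-coisometry} applied to the coisometry $1_Z$ this dilator is the span $(1_Z, 1_Z)$. Feeding the dilation $(vg, uf)$ into the universal property of this dilator produces a unique coisometry $e$ satisfying $1_Z e = vg$ and $1_Z e = uf$; comparing the two equations forces $vg = e = uf$, which is exactly the commutativity sought. The only nonroutine step is recognising that the pertinent dilation is one of an identity morphism: once that is spotted, \cref{p:dilator-coisometry} collapses the comparison map to an equality of the two legs, and the remaining manipulations are elementary dagger algebra.
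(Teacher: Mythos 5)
Your proof is correct and follows essentially the same route as the paper's: both recognise the composite span (in your case $(vg, uf)$, in the paper's $(uf, vg)$) as a dilation of the identity, invoke \cref{p:dilator-coisometry} to identify $(1,1)$ as a dilator of $1$, and let the uniqueness clause of the universal property force $uf = vg$. The only differences are cosmetic — the order of the span's legs and the dagger manipulation used to verify the dilation condition.
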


\begin{proof}
The span \((uf, vg)\) is a dilation of \(1\) because
\[vg(uf)^\dagger = vgf^\dagger u^\dagger = vv^\dagger uu^\dagger = 1.\]
By \cref{p:dilator-coisometry}, the span \((1, 1)\) is a dilator of \(1\). Hence there is a unique morphism \(h\) such that \(uf = 1h\) and \(vg = 1h\). This means that \(uf = h = vg\).
\end{proof}

\begin{example}
\label{x:msurj-indep}
Recall from \cref{x:bitotal:coisom} that the coisometries in \(\MSurj\) are the surjective (single-valued) functions. A commutative square
\[
    \begin{tikzcd}
        X
            \arrow[r, "g"]
            \arrow[d, "f" swap]
            \&
        B
            \arrow[d, "v"]
        \\
        A
            \arrow[r, "u" swap]
            \&
        C
    \end{tikzcd}
\]
in the category \(\Surj \cong \Coisom(\MSurj)\) is independent exactly when
\[
    f^{-1}\set{a} \cap g^{-1}\set{b} \neq \emptyset
\]
for all \(a \in A\) and \(b \in B\) such that \(ua = vb\), or, equivalently, when the comparison function from the span \((f, g)\) to the pullback in \(\Set\) of the cospan \((u, v)\) is surjective.
\end{example}

\begin{example}\label{f:partial-injections:ind}
Recall from \cref{x:partial-injections:isom} that the isometries in \(\PInj\) are precisely the injective (total) functions. Consider a commutative diagram
\[
\begin{tikzcd}[sep=large]
	C
	\ar[r, "v"]
	\ar[d, "u" swap]
    \ar[dr, "h"]
	\&
	B
	\ar[d, "g"]
	\\
	A
	\ar[r, "f" swap]
	\&
	X
\end{tikzcd}
\]
in the category \(\Inj \cong \Isom(\PInj)\) of sets and injective functions. The outer square is co-independent exactly when $\Ran f \cap \Ran g = \Ran h$, or, equivalently, when it is a pullback. As \(\Inj\) has pullbacks, every cospan in \(\Inj\) is part of a co-independent square.
\end{example}

\begin{example}\label{x:finprob:ind}
	Recall from \cref{x:finprob:coisom} that the coisometries in \(\FinProb\) are the deterministic maps, and they can be regarded as random variables. Independence of squares in \(\FinProbDet\) is connected to independence of random variables.
    
    Consider two random variables \(X\) and \(Y\) on a finite probability space \((\Omega, \Pr)\) valued, respectively, in finite sets \(A\) and \(B\). The one-point probability space \(1\) is terminal in \(\FinProbDet\), so
    \begin{equation}
        \label{e:finprob:ind-rvs-sq}
        \begin{tikzcd}
            (\Omega, \Pr)
                \ar[r, "\delta_Y"]
                \ar[d, "\delta_X" swap]
                \&
            (B, Y_*\Pr)
                \ar[d, "!"]
            \\
            (A, X_*\Pr)
                \ar[r, "!" swap]
                \&
            1
        \end{tikzcd}
    \end{equation}
    is a commutative square in \(\FinProbDet\). The probability space \(1\) is also a zero object in \(\FinProb\); the zero morphism \((A, X_*\Pr) \to (B, Y_*\Pr)\) is defined by
    \[(a, b) \mapsto (Y_*\Pr)(b)\]
    for all \(a \in A\) and \(b \in B\). Hence the square \cref{e:finprob:ind-rvs-sq} is independent exactly when
    \[
        \delta_Y {\delta_X}^\dagger (b|a) \,=\, (Y_*\Pr)(b)
    \]
    for all \(a \in A\) and all \(b \in B\).
    By the discussion at the end of \cref{x:finprob:coisom}, this equation can be rewritten as
    \[
        \CPr[Y \mathbin{=} b|X \mathbin{=} a]\,=\,\CPr[Y \mathbin{=} b].
    \]
    This is one way to say that the random variables \(X\) and \(Y\) are independent.
            
    More generally, suppose that \(fX = gY\) for some surjective functions \(f \colon A \to C\) and \(g \colon B \to C\) to a finite set \(C\), and let \(Z = fX = gY\). Then the diagram
    \begin{equation}
        \label{x:fin-prob:ind-square}
        \begin{tikzcd}[sep=large]
            (\Omega, \Pr)
                \ar[r, "\delta_Y"]
                \ar[d, "\delta_X" swap]
                \ar[dr, "\delta_Z"]
                \&
            (B, Y_*\Pr)
                \ar[d, "\delta_g"]
            \\
            (A, X_*\Pr)
                \ar[r, "\delta_f" swap]
                \&
            (C, Z_*\Pr)
        \end{tikzcd}
    \end{equation}
    in \(\FinProbDet\) is commutative. Now, the outer square is independent exactly when
    \begin{equation}
        \label{e:fin-prob:ind}
        (\delta_Y {\delta_X}^\dagger)(b|a)
        \,=\, ({\delta_g}^\dagger \delta_f)(b|a)
    \end{equation}
    for all \(a \in A\) and \(b \in B\). However
    \[
        ({\delta_g}^\dagger \delta_f)(b|a)
        \,=\, \sum_{c \in C} {\delta_g}^\dagger(b|c)\,\delta_f(c|a)
        \,=\, {\delta_g}^\dagger(b|fa)
    \]
    and \({\delta_g}^\dagger = \delta_Y {\delta_Z}^\dagger\), so equation \cref{e:fin-prob:ind} can be rewritten as
    \begin{equation}
        \label{e:fin-prob:ind-1}
        \CPr[Y \mathbin{=} b|X \mathbin{=} a] \,=\, \CPr[Y \mathbin{=} b|Z \mathbin{=} fa],
    \end{equation}
    This is one way to say that \(X\) and~\(Y\) are conditionally independent given \(Z\). 
    
    If the outer square in \cref{x:fin-prob:ind-square} is independent, then \(\delta_Z\) is, by \cref{p:ind-pushout}, necessarily a pushout in \(\FinProbDet\) of \(\delta_X\) and \(\delta_Y\). However, every span in \(\FinProbDet\) has a pushout: take the pushout of the underlying random variables in \(\FinSet\) and equip its apex with the pushforward measure (see \cref{x:finprob:coisom}). We propose the name \textit{relatively independent} for two random variables \(X\) and \(Y\) that are conditionally independent given their pushout. 
    
    Note that equation \cref{e:fin-prob:ind-1} can also be simplified to
    \[
        \CPr[X \mathbin{=} a, Y \mathbin{=} b]
        \,=\,
        \begin{cases}
            \dfrac{\CPr[X \mathbin{=} a]\,\CPr[Y \mathbin{=} b]}{\CPr[Z \mathbin{=} fa]} &\text{if \(fa = gb\), and}\\
            0 &\text{otherwise,}
        \end{cases}
    \]
    highlighting the symmetry in \(X\) and \(Y\).
\end{example}

\begin{example}\label{x:mat:ind}
	Recall from \cref{x:mat:isom} that the isometries in $\Mat_{\leq 1}$, which are also the isometries in \(\Mat\), are the matrices whose columns are orthonormal. Co-independence of squares in \(\Isom(\Mat_{\leq 1}) = \Isom(\Mat)\) is related to orthogonality.
		
    First, consider a (necessarily commutative) square
    \[
        \begin{tikzcd}
            0
                \ar[r, "!"]
                \ar[d, "!" swap]
                \&
            n
                \ar[d, "B"]
            \\
            m
                \ar[r, "A" swap]
                \&
            p
        \end{tikzcd}
    \]
    of isometric matrices. Co-independence of the square means that \(B^\dagger A = 0\). This is equivalent to the columns of $A$ being orthogonal to the columns of $B$. It is also equivalent to the \textit{column space}
    \[
        \Col A = \setb[\big]{Au}{u \in \Reals^m}
    \]
    of \(A\) being orthogonal to \(\Col B\).

    More generally, consider a commutative diagram
    \begin{equation}
        \label{e:mat-ind}
        \begin{tikzcd}[sep=large]
            k
                \ar[r, "V"]
                \ar[d, "U" swap]
                \ar[dr, "C"]
                \&
            n
                \ar[d, "B"]
            \\
            m
                \ar[r, "A" swap]
                \&
            p
        \end{tikzcd}
    \end{equation}
    of isometric matrices. Co-independence of the outer square means that \(A^\dagger B = UV^\dagger\).
    
    Observe that \(\Col C\) is a subspace of both \(\Col A\) and \(\Col B\). Let \(P = CC^\dagger\); multiplying by \(P\) orthogonally projects vectors in \(\Reals^p\) onto \(\Col C\). The subspaces \(\Col A\) and \(\Col B\) are said to be \textit{relatively orthogonal}~\cite{jamneshanpan:syndeticity} with respect to the common subspace \(\Col C\) if \(a \cdot b = a \cdot Pb\) for all \(a \in \Col A\) and all \(b \in \Col B\). We will show that the outer square in \cref{e:mat-ind} is co-independent if and only if \(\Col A\) and \(\Col B\) are relatively orthogonal with respect to \(\Col C\). Both directions use the equation
    \[
        UV^\dagger = A^\dagger A UV^\dagger B^\dagger B
        = A^\dagger (AU)(BV)^\dagger B
        = A^\dagger CC^\dagger B
        = A^\dagger PB.
    \]
    For the \textit{only if} direction, if \(a \in \Col A\) and \(b \in \Col B\), then \(a = Ax\) for some \(x \in \Reals^m\) and \(b = By\) for some \(y \in \Reals^n\), so
    \[
        a \cdot b = Ax \cdot By = x \cdot A^\dagger B y = x \cdot UV^\dagger y = x \cdot A^\dagger PBy = Ax \cdot PBy = a \cdot Pb.
    \]
    For the \textit{if} direction, for all \(i \in \set{1, 2, \dots, m}\) and \(j \in \set{1, 2, \dots, n}\),
    \[
        e_i \cdot A^\dagger(1 - P)Be_j = Ae_i \cdot (1 - P)Be_j = A e_i \cdot Be_j - Ae_i \cdot PBe_j = 0,
    \]
    so \(A^\dagger(1 - P)B = 0\), and thus \(A^\dagger B = A^\dagger PB = UV^\dagger\).

    To give better intuition for relative orthogonality, we now prove an alternative characterisation\todo{Cite e.g. \href{https://terrytao.wordpress.com/tag/ergodicity/}{Tao} (better to cite a paper though)} of it in terms of orthogonal complements: the subspaces \(\Col A\) and \(\Col B\) are relatively orthogonal with respect to \(\Col C\) if and only if \(\Col A\) is orthogonal to the \textit{orthogonal complement}
    \[
        \Col B \ominus \Col C = \setb[\big]{b \in \Col B}{b \cdot c = 0 \text{ for all } c \in \Col C}
    \]
    of \(\Col C\) in \(\Col B\). Note that \(b \in \Col B \ominus \Col C\) if and only if \(Pb = 0\). For the \textit{only if} direction, if \(a \in \Col A\) and \(b \in \Col B \ominus \Col C\), then \(Pb = 0\), so
    \[
        a \cdot b = a \cdot Pb = a \cdot 0 = 0.
    \]
    For the \textit{if} direction, if \(a \in \Col A\) and \(b \in \Col B\), then
    \[
        a \cdot b = a \cdot Pb + a \cdot (1 - P)b = a \cdot Pb
    \]
    because \((1 - P)b \in \Col B \ominus \Col C\).

    By the dual of \cref{p:ind-pushout}, if the outer square in \cref{e:mat-ind} is co-independent, then the span \((U, V)\) is necessarily the pullback of the cospan \((A, B)\). This corresponds to the fact that if \(\Col A\) and \(\Col B\) are relatively orthogonal with respect to \(\Col C\), then \(\Col C = \Col A \cap \Col B\). Hence we may leave \(\Col C\) implicit and simply talk about relative orthogonality of the subspaces \(\Col A\) and \(\Col B\).
    
    For further intuition behind relative orthogonality, see \cref{f:rel-orth}.
    \begin{figure}
        \centering
        \begin{tikzpicture}[x=2em, y=1.5em]
        
        \filldraw[
            fill=black!25
        ] (0,0) -- (0,-3) -- (2,-1) -- (2,0) -- cycle;

        \begin{scope}[
            fill=black!5
        ]
            \filldraw (3,0) -- (5,2) -- (2,2) -- (0,0) -- cycle;
            \filldraw (0,0) -- (0,2) -- (-1,2) -- (-3,0) -- cycle;
        \end{scope}

        \filldraw[
            fill=black!25
        ] (0,0) -- (2,2) -- (2,5) -- (0,3) -- cycle;

        \draw[line width=1pt] (0,0) -- (2,2);

        \draw[line width=1pt] (1,1) -- (4,1);
        \draw[line width=1pt] (0,1) -- (-2,1);
        \draw[line width=1pt] (1,1) -- (1,4);
        \draw[line width=1pt] (1,0) -- (1,-2);

        \end{tikzpicture}
        \caption{Two planes in \(\Reals^3\) that are relatively orthogonal.}
        \label{f:rel-orth}
    \end{figure}
\end{example}

It is clear from \cref{x:finprob:ind,x:mat:ind} that the co-independent squares in \(\PInj\) and \(\Mat_{\leq 1}\) are always pullback squares. Less obviously, the independent squares in \(\MSurj\) and \(\FinProb\) are always pushout squares. This is no coincidence. 

\begin{proposition}
\label{p:ind-pushout}
If \(\D\) is a dagger category, then every independent square in \(\Coisom(\D)\) is a pushout square in \(\D\) and also in \(\Coisom(\D)\).
\end{proposition}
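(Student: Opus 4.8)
The plan is to verify the pushout universal property directly, exhibiting the mediating morphism explicitly from the dagger. Fix an independent square of coisometries, with \(f \colon X \to A\) and \(g \colon X \to B\) across the top and \(u \colon A \to C\) and \(v \colon B \to C\) across the bottom, so that \(uf = vg\) and \(gf^\dagger = v^\dagger u\); daggering the second relation also gives \(u^\dagger v = fg^\dagger\). Throughout I would freely use \(ff^\dagger = gg^\dagger = uu^\dagger = vv^\dagger = 1\). Given a cocone \(a \colon A \to D\) and \(b \colon B \to D\) in \(\D\) with \(af = bg\), the first observation is that the mediator is \emph{forced}: if \(c \colon C \to D\) satisfies \(cu = a\), then \(c = c\,uu^\dagger = au^\dagger\), and symmetrically \(cv = b\) forces \(c = bv^\dagger\). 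This immediately settles uniqueness and pins down the only possible candidate.

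For existence I would set \(c = au^\dagger\) and check the two triangles. The well-definedness \(au^\dagger = bv^\dagger\) is the small crux: inserting \(vv^\dagger = 1\) gives \(au^\dagger = a\,u^\dagger v\,v^\dagger = a\,fg^\dagger\,v^\dagger = (af)g^\dagger v^\dagger = (bg)g^\dagger v^\dagger = bv^\dagger\), using the daggered independence relation and the cocone equation. With \(c\) well defined, the triangles read \(cv = au^\dagger v = a\,fg^\dagger = (af)g^\dagger = (bg)g^\dagger = b\) and \(cu = bv^\dagger u = b\,gf^\dagger = (bg)f^\dagger = (af)f^\dagger = a\). Hence the square is a pushout in \(\D\).

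For the pushout in \(\Coisom(\D)\), note that \(\Coisom(\D)\) is a wide subcategory of \(\D\), that the base square already lives in it (its four legs are coisometries and it commutes), and that uniqueness is therefore inherited from \(\D\). So the only remaining point is that the mediator stays inside \(\Coisom(\D)\) whenever the cocone does: if \(a\) and \(b\) are coisometries, then so is \(c\). Here I would play the two expressions for \(c\) against each other, computing \(cc^\dagger = (au^\dagger)(bv^\dagger)^\dagger = a\,u^\dagger v\,b^\dagger = a\,fg^\dagger\,b^\dagger = (af)(bg)^\dagger = (af)(af)^\dagger = a\,ff^\dagger a^\dagger = aa^\dagger = 1\), again using \(af = bg\) together with \(a\) being coisometric.

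The argument is essentially bookkeeping with the defining relations, and nowhere uses that \(\D\) is dilatory. The only genuinely delicate points are recognising that the mediator is forced to equal \(au^\dagger = bv^\dagger\), and that establishing both its defining equations and its coisometry requires using the two independence relations \(u^\dagger v = fg^\dagger\) and \(v^\dagger u = gf^\dagger\) in tandem rather than either one alone. Beyond keeping daggers and composition order straight, I expect no real obstacle.
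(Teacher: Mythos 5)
Your proof is correct and follows essentially the same route as the paper's: both verify the pushout universal property directly, observe that the mediator is forced to be \(c = au^\dagger\) (the paper writes it as \(dh^\dagger = af(uf)^\dagger\), which is the same morphism), establish the two triangles via the independence relation \(gf^\dagger = v^\dagger u\) and its dagger, and check that \(c\) is coisometric when \(a\) and \(b\) are. The only differences are cosmetic (the paper verifies \(cc^\dagger = cuu^\dagger c^\dagger = aa^\dagger = 1\) slightly more compactly than your computation, but both are fine).
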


Of course, dually, every \textit{co-independent} square in \(\Isom(\D)\) is a \textit{pullback} square in both \(\D\) and \(\Isom(\D)\).

\begin{proof}
Consider an independent square
\[
    \begin{tikzcd}[sep=large, cramped]
        \pob
            \arrow[r, "g"]
            \arrow[d, "f" swap]
            \arrow[dr, independent]
            \&
        \pob
            \arrow[d, "v"]
        \\
        \pob
            \arrow[r, "u" swap]
            \&
        \pob
    \end{tikzcd}
\]
in \(\Coisom(\D)\). Let \(a\) and \(b\) be morphisms in \(\D\) making the square
\[
    \begin{tikzcd}[sep=large, cramped]
        \pob
            \arrow[r, "g"]
            \arrow[d, "f" swap]
            \&
        \pob
            \arrow[d, "b"]
        \\
        \pob
            \arrow[r, "a" swap]
            \&
        \pob
    \end{tikzcd}
\]
commutative. We must show that there is a unique morphism \(c\) in \(\D\) such that
\[
    \begin{tikzcd}[sep={0}, cramped]
        \pob
            \arrow[r, "g"]
            \arrow[d, "f" swap]
            \&[4.8em]
        \pob
            \arrow[d, "v" swap, shift right]
            \arrow[ddr, "b", out=-30, in=90, looseness=1, shorten <=-0.25ex, shorten >=0.25ex]
            \&[1.5em]
        \\[3.6em]
        \pob
            \arrow[r, "u", shift left]
            \arrow[rrd, "a" swap, out=-60, in=180, looseness=0.8, shorten <=-1ex, shorten >=0.5ex]
            \&
        \pob
            \arrow[dr, "c"{pos=0.1}, shorten >=0.5ex, shorten <=-1ex]
            \&
        \\[1em]
            \&
            \&
        \pob
    \end{tikzcd}
\]
is commutative. For convenience, let
\[
    h = uf = vg \qquad \text{and} \qquad d = af = bg.
\]
If such a morphism \(c\) exists, then it satisfies the equation
\[
    c = cuu^\dagger = au^\dagger = aff^\dagger u^\dagger = dh^\dagger.
\]
Conversely, if we define \(c\) by the equation \(c = dh^\dagger\),
then
\begin{align*}
    cu
    &= dh^\dagger u
    = aff^\dagger u^\dagger u
    = au^\dagger u
    = au^\dagger vv^\dagger u
    \\&= afg^\dagger g f^\dagger
    = bgg^\dagger g f^\dagger
    = bgf^\dagger
    = aff^\dagger
    = a,
\end{align*}
and similarly \(cv = b\). Hence the independent square is a pushout square in \(D\).

To see that it is also a pushout square in \(\Coisom(\D)\), observe that if \(a\) is coisometric, then \(cc^\dagger = cuu^\dagger c^\dagger = aa^\dagger = 1\), so \(c\) is also coisometric (a similar argument shows that \(b\) is also coisometric).
\end{proof}

In light of \cref{p:ind-pushout}, we may think of independence as really being a property of spans rather than squares. This perspective is particularly suited for independence categories like \(\MSurj\) and \(\FinProb\) in which \textit{every} span has a pushout.

\begin{proposition}\label{p:relative-independence}
In a dagger category, if a diagram
\[
    \begin{tikzcd}[sep=large, cramped]
        \pob
            \arrow[r, "g"]
            \arrow[d, "f" swap]
            \arrow[dr, "h"]
            \&
        \pob
            \arrow[d, "v"]
        \\
        \pob
            \arrow[r, "u" swap]
            \&
        \pob
    \end{tikzcd}
\]
of coisometries is commutative, then the outer square is independent if and only if
\[
    h^\dagger h = f^\dagger f g^\dagger g.
\]
\end{proposition}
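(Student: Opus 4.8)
The plan is to reduce the stated biconditional to a single equation by invoking the explicit description of independence in \(\Coisom(\D)\) from \cref{p:dildag-indep}. By the defining equivalence \cref{e:ind}, the outer square is independent if and only if \(uf = vg\) and \(gf^\dagger = v^\dagger u\). Since the diagram is assumed commutative, \(uf = vg = h\) holds automatically, so the entire content of the proposition is that \(gf^\dagger = v^\dagger u\) if and only if \(h^\dagger h = f^\dagger f g^\dagger g\).

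First I would record the two consequences \(u = hf^\dagger\) and \(v = hg^\dagger\) of commutativity, which follow from \(h = uf\), \(h = vg\) together with the coisometry equations \(ff^\dagger = 1\) and \(gg^\dagger = 1\). Dualising the second gives \(v^\dagger = gh^\dagger\), so that \(v^\dagger u = g h^\dagger h f^\dagger\); this rewrites the quantity of interest entirely in terms of \(h^\dagger h\), \(f\), and \(g\), which is exactly the bridge to the right-hand condition. For the forward direction, assuming \(gf^\dagger = v^\dagger u\), I would expand \(h^\dagger h = (uf)^\dagger(vg) = f^\dagger u^\dagger v g\) and substitute \(u^\dagger v = (v^\dagger u)^\dagger = (gf^\dagger)^\dagger = fg^\dagger\) to land directly on \(h^\dagger h = f^\dagger f g^\dagger g\).

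The reverse direction is where I expect the real work to be, and the key observation is that \(h^\dagger h\) is self-adjoint. Thus the hypothesis \(h^\dagger h = f^\dagger f g^\dagger g\) forces the idempotents \(f^\dagger f\) and \(g^\dagger g\) to commute, since \(f^\dagger f g^\dagger g = (f^\dagger f g^\dagger g)^\dagger = g^\dagger g f^\dagger f\). Feeding this reordered expression into \(v^\dagger u = g h^\dagger h f^\dagger = g\,(g^\dagger g f^\dagger f)\,f^\dagger\) and telescoping via \(gg^\dagger = 1\) and \(ff^\dagger = 1\) collapses the product to \(gf^\dagger\), completing the equivalence. The only subtlety to watch is keeping track of which factors are \emph{co}isometric (so that \(cc^\dagger = 1\), not \(c^\dagger c = 1\)); the whole argument hinges on being able to cancel on the correct side, and the self-adjointness trick is what converts the hypothesis into the precisely-ordered product that telescopes.
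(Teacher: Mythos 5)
Your proof is correct and takes essentially the same route as the paper's: both reduce the statement, via the description of independent squares in \cref{p:dildag-indep}, to the equivalence of \(gf^\dagger = v^\dagger u\) with \(h^\dagger h = f^\dagger f g^\dagger g\), the forward directions are identical, and both converses are telescoping computations using \(ff^\dagger = 1\) and \(gg^\dagger = 1\). The only difference is that the paper computes \(u^\dagger v = ff^\dagger u^\dagger v gg^\dagger = fh^\dagger h g^\dagger = fg^\dagger\), where the hypothesis telescopes in its given order, whereas by targeting \(v^\dagger u\) directly you need the additional (correct) observation that self-adjointness of \(h^\dagger h\) forces \(f^\dagger f\) and \(g^\dagger g\) to commute — a fact the paper instead records separately in its discussion of dagger idempotents immediately after the proposition.
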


\begin{proof}
If the square is independent, then
\[
    h^\dagger h = f^\dagger u^\dagger vg = f^\dagger f g^\dagger g.
\]
Conversely, if \(h^\dagger h = f^\dagger f g^\dagger g\), then
\[
    u^\dagger v = ff^\dagger u^\dagger vgg^\dagger = fh^\dagger h g^\dagger = ff^\dagger f g^\dagger g g^\dagger = fg^\dagger. \qedhere
\]
\end{proof}

\cref{p:ind-pushout,p:relative-independence} are connected to each other. To see this, we must first recall some facts~\cite{selinger:idempotents} about idempotents in dagger categories (which we will not need beyond the present subsection). An endomorphism \(p\) in a dagger category is called \textit{dagger idempotent} if \(pp = p\) and \(p = p^\dagger\), or, equivalently, if \(p^\dagger p = p\). If \(f\) is a coisometry, then \(f^\dagger f\) is dagger idempotent. A dagger idempotent \(p\) is said to \textit{dagger split} if there is a coisometry \(f\) such that \(p = f^\dagger f\). If \(p\) and \(q\) are commuting dagger idempotents on the same object \(A\), then \(pq\) is also dagger idempotent; in fact, it is the meet (greatest lower bound) of \(p\) and \(q\) with respect to the canonical partial ordering of the dagger idempotents on \(A\), which is defined by \(p \leq q\) if and only if \(pq = p\). By \cref{p:relative-independence}, in dagger categories in which every dagger idempotent dagger splits, a span \((f, g)\) of coisometries is part of an independent square
\[
    \begin{tikzcd}[sep=large, cramped]
        \pob
            \arrow[r, "g"]
            \arrow[d, "f" swap]
            \arrow[dr, independent]
            \&
        \pob
            \arrow[d, "v"]
        \\
        \pob
            \arrow[r, "u" swap]
            \&
        \pob
    \end{tikzcd}
\]
of coisometries exactly when the dagger idempotents \(f^\dagger f\) and \(g^\dagger g\) commute; just let \(u = hf^\dagger\) and \(v = hg^\dagger\) where \(h\) is a dagger splitting of the dagger idempotent \(f^\dagger f g^\dagger g\). The fact that \(h^\dagger h\) is the meet of \(f^\dagger f\) and \(g^\dagger g\) with respect to the canonical partial ordering of dagger idempotents corresponds to the fact that \((u, v)\) is the pushout in the category of coisometries of \((f, g)\), which we also know from \cref{p:ind-pushout}.

\cref{p:relative-independence} and the subsequent analysis in the previous paragraph together generalise known facts about the dagger category of relations of a regular category to arbitrary dagger categories (see, e.g., \cite[Thm.~5.2]{carboni:remarks-maltsev-goursat}).\footnote{The following facts will be helpful when translating the cited theorem into the language of the present article. A morphism \(e\) in a regular category \(\C\) is regular epic (called a \textit{surjection} in the cited article) if and only if the relation associated to \(e\) is coisometric in \(\Rel \C\). An equivalence relation in \(\C\) is precisely a dagger idempotent \(R\) in \(\Rel \C\) such that \(1 \leq R\). The embedding of the poset of equivalence relations on an object into the poset of dagger idempotents on the object is order reversing. Indeed, if \(R\) and \(S\) are equivalence relations and \(R \leq S\), then \(RS \leq SS = S\) and \(S = 1S \leq RS\), so \(RS = S\). Conversely, if \(RS = S\), then \(R = R1 \leq RS = S\). The kernel pair of a regular epi \(e\) is the relation \(e^\dagger e\) where \(e\) is regarded as a coisometry in \(\Rel \C\). An equivalence relation \(R\) in \(\C\) is effective (i.e., a kernel pair) precisely when it splits coisometrically, that is, when there is a coisometry \(e\) in \(\Rel \C\) such that \(R = e^\dagger e\).}

\begin{example}
    The dagger idempotents in \(\MSurj\) on a set \(A\) are precisely the equivalence relations on \(A\).
    Each dagger idempotent \({\sim}\) on \(A\) is dagger split, in particular, by the surjective function from \(A\) to the set of equivalence classes of \({\sim}\) that sends each element of \(A\) to its equivalence class. Conversely, for each surjective function \(f \colon A \to B\), the dagger idempotent \(f^\dagger f\) is the equivalence relation on \(A\) whose equivalence classes are the fibres of \(f\); this is the \textit{kernel pair} in \(\Set\) of \(f\). By the discussion before this example, a span \((f, g)\) of surjective functions is part of an independent square in \(\Surj\) exactly when the equivalence relations \(f^\dagger f\) and \(g^\dagger g\) commute. Combining this observation with the characterisation in \cref{x:msurj-indep} of the independent squares in \(\Surj\) yields a known characterisation of the pairs of equivalence relations that commute with each other~\cite{dubreil:relations-dequivilance} (see also \cite[Thm.~2]{britz:operations-family-equivalence}):  they are the pairs of equivalence relations that are \textit{independent} relative to their join.
\end{example}


\begin{lemma}
    \label{p:rev-descent}
    In an epi-regular independence category,
\[
    \begin{tikzcd}[cramped, sep=large]
        \pob
            \arrow[r, "g"]
            \arrow[d, "f" swap]
            \arrow[dr, independent]
            \&
        \pob
            \arrow[d, "v"]
        \\
        \pob
            \arrow[r, "u" swap]
            \&
        \pob
    \end{tikzcd}
    \qquad\text{implies}\qquad
    \begin{tikzcd}[cramped]
        \pob
            \arrow[ddrr, independent]
            \arrow[d, "h" swap]
            \arrow[r, "h"]
            \&
        \pob
            \arrow[r, "g"]
            \&
        \pob
            \arrow[dd, "v"]
        \\
        \pob
            \arrow[d, "f" swap]
            \&
            \&
        \\
        \pob
            \arrow[rr, "u" swap]
            \&
            \&
        \pob
    \end{tikzcd}
    .
\]
\end{lemma}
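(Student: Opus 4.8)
The plan is to recognise the outer square---whose top edge is the composite $gh$, left edge the composite $fh$, right edge $v$, and bottom edge $u$---as a horizontal paste of the given independent square with a single auxiliary square, and then to build that auxiliary square by stacking two of the degenerate independent squares supplied by the axioms. The guiding intuition comes from \cref{p:dildag-indep}: in $\Coisom(\D)$, independence of the outer square asserts $gh(fh)^\dagger = v^\dagger u$, which reduces to the hypothesis $gf^\dagger = v^\dagger u$ precisely because $hh^\dagger = 1$ absorbs the inserted coisometry $h$. The combinatorial argument below performs the same absorption using only \cref*{a:independence:2,a:independence:3,a:independence:4,a:independence:5}.

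First I would derive vertical pasting of independent squares from the axioms: reflecting two vertically stacked squares across the diagonal via \cref*{a:independence:4} turns them into a horizontally composable pair, \cref*{a:independence:3} composes them, and a second application of \cref*{a:independence:4} transposes the result back. Using this, I would assemble the auxiliary square with top edge $h$, left edge $fh$, right edge $f$, and bottom edge $1$ as the vertical composite of the square from \cref*{a:independence:5} (top and left edge $h$, right and bottom edge the identity) stacked over the square from \cref*{a:independence:2} (left and right edge $f$, top and bottom edge the identity). Both are independent by their defining axioms, and they share the identity edge on the common object, so the composite is well defined and independent.

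Finally I would paste this auxiliary square horizontally to the left of the given independent square (top $g$, left $f$, right $v$, bottom $u$); their shared vertical edge is $f$, so \cref*{a:independence:3} applies and produces exactly the desired outer square, with top $gh$, left $fh$, right $v$, and bottom $u$. The only real difficulty is bookkeeping: keeping the four edge-orientations consistent through the two transpositions, and identifying the correct identity-laden squares to insert. I would also note that this argument invokes only the independence-category axioms, and so epi-regularity is not in fact required for this lemma.
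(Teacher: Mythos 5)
Your proof is correct and is essentially the paper's argument: the paper pastes a $2\times 2$ grid built from the axiom (I5) square for $h$, the axiom (I2) square for $f$ (plus a transposed (I2) square for $g$), and the given square, which is just slightly different bookkeeping of your three-square pasting via (I3) and (I4). Your closing remark is also accurate---the paper's proof likewise invokes only the independence-category axioms, so epi-regularity is not actually used.
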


\begin{proof}
The pasting
\[
    \begin{tikzcd}[cramped, sep=large]
        \pob
            \arrow[r, "h"]
            \arrow[d, "h" swap]
            \arrow[dr, independent]
            \&
        \pob
            \arrow[dr, independent]
            \arrow[d, "1" swap]
            \arrow[r, "g"]
            \&
        \pob
            \arrow[d, "1"]
        \\
        \pob
            \arrow[dr, independent]
            \arrow[r, "1"]
            \arrow[d, "f" swap]
            \&
        \pob
            \arrow[dr, independent]
            \arrow[r, "g"]
            \arrow[d, "f" swap]
            \&
        \pob
            \arrow[d, "v"]
        \\
        \pob
            \arrow[r, "1" swap]
            \&
        \pob
            \arrow[r, "u" swap]
            \&
        \pob
    \end{tikzcd}
\]
is independent by the independence category axioms.
\end{proof}

\subsection{Independent pullbacks}
\label{s:ind-pull}

\begin{definition}
In an independence category, an \textit{independent pullback}
of a cospan \((u, v)\) is a span \((p_1, p_2)\) such that
\[
    \begin{tikzcd}[cramped, sep=large]
        \pob
            \arrow[r, "p_2"]
            \arrow[d, "p_1" swap]
            \arrow[dr, independent]
            \&
        \pob
            \arrow[d, "v"]
        \\
        \pob
            \arrow[r, "u" swap]
            \&
        \pob
    \end{tikzcd},
\]
and, if
\[
    \begin{tikzcd}[cramped, sep=large]
        \pob
            \arrow[r, "g"]
            \arrow[d, "f" swap]
            \arrow[dr, independent]
            \&
        \pob
            \arrow[d, "v"]
        \\
        \pob
            \arrow[r, "u" swap]
            \&
        \pob
    \end{tikzcd},
\]
then there is a unique morphism \(e\) such that the diagram
\[
    \begin{tikzcd}[cramped, sep=large]
            \&
        \pob
            \arrow[dl, "f" swap]
            \arrow[dr, "g"]
            \arrow[d, "e"]
            \&
        \\
        \pob
            \&
        \pob
            \arrow[l, "p_1"]
            \arrow[r, "p_2" swap]
            \&
        \pob
    \end{tikzcd}
\]
is commutative. 

An \textit{independent kernel pair} of a morphism \(u\) is an independent pullback of the cospan \((u, u)\). In an independence category with a terminal object~\(1\), an \textit{independent product} of objects \(X\) and \(Y\) is an independent pullback of the cospan formed by the unique maps \(X \to 1\) and \(Y \to 1\). \textit{Co-independent pushout}, \textit{co-independent cokernel pair} and \textit{co-independent coproduct} for co-independence categories are defined dually.
\end{definition}

Write \((A \boxtimes_C B, p_1, p_2)\) for a chosen independent pullback of a cospan \((C, u, v)\) from \(A\) to \(B\) in an independence category, and write \(A \boxtimes B\) instead of \(A \boxtimes_1 B\) for a chosen independent product of \(A\) and \(B\). Similarly, write \((A \boxplus_C B, i_1, i_2)\) for a chosen co-independent pushout of a span \((C, u, v)\) from \(A\) to \(B\) in a co-independence category, and write \(A \boxplus B\) instead of \(A \boxplus_0 B\) for a chosen co-independent pushout of \(A\) and \(B\).

Recall from \cref{p:dildag-indep} that, for each dagger category \(\D\), the category \(\Coisom(\D)\) is canonically an independence category.

\begin{proposition}\label{p:pullbacks-coisom}
	If $\D$ is a dilatory dagger category, then the independence category $\Coisom(\D)$ has independent pullbacks. In particular, for each cospan  \((u, v)\) in $\Coisom(\D)$, the dilator of \(v^\dagger u\) is an independent pullback of \((u, v)\). Also, if $G$ is a dilatory dagger functor, then $\Coisom(G)$ preserves independent pullbacks.
\end{proposition}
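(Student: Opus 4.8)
The plan is to show that the dilator of $v^\dagger u$ \emph{is} the independent pullback, the key observation being that the independence predicate on $\Coisom(\D)$ from \cref{e:ind} packages together exactly commutativity and the dilation condition. Fix a cospan $(u, v)$ in $\Coisom(\D)$ with $u \colon A \to C$ and $v \colon B \to C$, so that $v^\dagger u \colon A \to B$, and let $(A \boxplus_{v^\dagger u} B, p_1, p_2)$ be a chosen dilator of $v^\dagger u$. By definition of a dilator, $p_1$ and $p_2$ are coisometries and $p_2 {p_1}^\dagger = v^\dagger u$.

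First I would check that the square formed by the span $(p_1, p_2)$ and the cospan $(u, v)$ is independent. Its independence equation $p_2 {p_1}^\dagger = v^\dagger u$ is immediate from the dilator being a dilation, and the remaining commutativity $u p_1 = v p_2$ is then free: the square of coisometries satisfies $p_2 {p_1}^\dagger = v^\dagger u$, so \cref{p:coisom-ind-commute} makes it commute, and hence independent.

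Next I would verify the universal property. Suppose a span $(f, g)$ forms an independent square with $(u, v)$; by \cref{e:ind} this means precisely that $uf = vg$ and $g {f}^\dagger = v^\dagger u$. The second equation says exactly that $(f, g)$ is a dilation of $v^\dagger u$. Since $(A \boxplus_{v^\dagger u} B, p_1, p_2)$ is terminal among such dilations, there is a unique coisometry $e$ with $p_1 e = f$ and $p_2 e = g$; as the morphisms of $\Coisom(\D)$ are the coisometries, this $e$ is the required unique mediating morphism. Hence $(p_1, p_2)$ is an independent pullback of $(u, v)$.

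Finally, for the functoriality claim: a dilatory dagger functor $G$ preserves both daggers and dilators, so $\Coisom(G)$ sends the dilator of $v^\dagger u$ to a dilator of $G(v^\dagger u) = (Gv)^\dagger (Gu)$, which by the first part of the proposition is an independent pullback of $(Gu, Gv)$; thus $\Coisom(G)$ preserves independent pullbacks. I expect no serious obstacle here: the only care needed is tracking the directions of $u$, $v$, and $v^\dagger u$ against the conventions for dilators, together with the observation that \cref{p:coisom-ind-commute} supplies commutativity for free, so that the dilation condition alone carries all the content of both the independence of the square and its universal property.
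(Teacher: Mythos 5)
Your proposal is correct and takes essentially the same approach as the paper: the paper's proof likewise rests on the observation (via \cref{p:coisom-ind-commute} and the definition \cref{e:ind} of independence in \(\Coisom(\D)\)) that the universal property of an independent pullback of \((u, v)\) is precisely the universal property of a dilator of \(v^\dagger u\), and derives preservation of independent pullbacks from preservation of dilators in exactly the way you describe. The only difference is expository—the paper compresses your first two steps into a single sentence, and for functoriality starts from an arbitrary independent pullback square rather than the chosen dilator, but your identification of independent pullbacks with dilators makes these interchangeable.
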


\begin{proof}
	By \cref{p:coisom-ind-commute}, the universal property of an independent pullback of \((u, v)\) is precisely the universal property of a dilator of \(v^\dagger u\).

	Let $G \colon \D \to \D'$ be a dilatory dagger functor. Let
	\[
		\begin{tikzcd}[cramped, sep=large]
			\pob
			\arrow[r, "p_2"]
			\arrow[d, "p_1" swap]
			\arrow[dr, independent]
			\&
			\pob
			\arrow[d, "v"]
			\\
			\pob
			\arrow[r, "u" swap]
			\&
			\pob
		\end{tikzcd}
	\]
	be an independent pullback in $\D$. Then $(p_1,p_2)$ is a dilator of $v^\dag u$. Since $G$ preserves dilators, $(Gp_1, Gp_2)$ is a dilator of $G(v^\dagger u)=(Gv)^\dagger (Gu)$ in $\D'$. Hence $G$ sends the square to an independent pullback square in $\Coisom(\D')$.
\end{proof}

\begin{example}\label{x:finprob:pb}
    Let us construct an independent pullback of a cospan
    \[
        \begin{tikzcd}[cramped]
            (A, \Pr_A)
                \&
            (C, \Pr_C)
                \arrow[from=l, "u"]
                \arrow[from=r, "v" swap]
                \&
            (B, \Pr_B)
        \end{tikzcd}
    \]
    in \(\FinProbDet\). From \cref{x:finprob:coisom}, we know that \(u = \delta_f\) and \(v = \delta_g\) for some surjective functions \(f \colon A \to C\) and \(g \colon B \to C\). Let
    \[
        A \times_C B = \setb[\big]{(a, b) \in A \times B}{fa = gb},
    \]
    and let \(p_1 \colon A \times_C B \to A\) and \(p_2 \colon A \times_C B \to B\) be the restrictions of the coordinate projection functions. The equation
    \[
        (\Pr_A \otimes_C \Pr_B)(a,b) \,=\, \frac{\Pr_A(a)\,\Pr_B(b)}{\Pr_C(c)}
    \]
    for all \((a, b) \in A \times_C B\) defines a probability measure \(\Pr_A \otimes_C \Pr_B\) on \(A \times_C B\) such that
    \[
        (p_1)_* (\Pr_A \otimes_C \Pr_B) = \Pr_A
        \qquad\text{and}\qquad
        (p_2)_* (\Pr_A \otimes_C \Pr_B) = \Pr_B.
    \]
    The span
    \[
        \begin{tikzcd}[cramped]
            (A, \Pr_A)
                \&
            (A \times_C B, \Pr_A \otimes_C \Pr_B)
                \arrow[l, "\delta_{p_1}" swap]
                \arrow[r, "\delta_{p_2}"]
                \&
            (B, \Pr_B)
        \end{tikzcd}
    \]
    in \(\FinProbDet\) is an independent pullback of the original cospan. To prove this, simplify the description in \cref{x:finprob:dilator} of the dilator of \(v^\dagger u\). The probability space \((A \times_C B, \Pr_A \otimes_C \Pr_B)\) is known as~\cite{dawid-studeny} the \textit{conditional product} of \(A\) and \(B\) given \(C\).

    Recall that the one-point probability space \(1\) is terminal in \(\Coisom(\FinProb)\). Letting \((C, \Pr_C) = 1\) in the construction above, we see that an independent product in \(\Coisom(\FinProb)\) of finite probability spaces \((A, \Pr_A)\) and \((B, \Pr_B)\) can be formed by equipping the Cartesian product \(A \times B\) of their underlying sets with the probability measure \(\Pr_A \otimes \Pr_B\) defined, for all \((a, b) \in A \times B\), by
    \[
        (\Pr_A \otimes \Pr_B)(a,b) \,=\, \Pr_A(a)\,\Pr_B(b).
    \]
    The probability space \((A \times B, \Pr_A \otimes \Pr_B)\) is known as the \textit{product} of the probability spaces \(A\) and \(B\), despite not actually being a categorical product in any category (see \cite[Ex.~3.2]{simpson:category-theoretic-structure-independence}).
\end{example}

\begin{example}\label{x:mat:pb}
    The co-independence category \(\Mat_1\) of isometric matrices has a zero object, namely \(0\), and it has dilators (see \cref{x:mat:dilator}), so it has co-independent coproducts. The canonical co-independent coproduct of objects \(m\) and \(n\) is the cospan
    \[
        \begin{tikzcd}[cramped]
            m
                \arrow[r, "\begin{bsmallmatrix}1\\0\end{bsmallmatrix}"]
                \&
            m + n
                \&
            n
                \arrow[l, "\begin{bsmallmatrix}0\\1\end{bsmallmatrix}" swap]
        \end{tikzcd},
    \]
    which is a coproduct of \(m\) and \(n\) in \(\Mat\).

    More generally, the co-independence category \(\Mat_1\) has co-independent pushouts. A co-independent square
	\[
        \begin{tikzcd}
            k
                \ar[r, "V"]
                \ar[d, "U" swap]
                \ar[dr, coindependent]
                \&
            n
                \ar[d, "J_2"]
            \\
            m
                \ar[r, "J_1" swap]
                \&
            p
        \end{tikzcd}
    \]
	in \(\Mat_1\) is a co-independent pushout exactly when the internal sum of \(\Col J_1\) and \(\Col J_2\) is all of $\mathbb{R}^p$, or, equivalently, when the cospan \((J_1, J_2)\) is a pushout in the category \(\Mat\) of the span \((U, V)\).
\end{example}

\begin{lemma}
\label{p:ind-pull-is-joint-monic}
In an independence category, if
\[
    \begin{tikzcd}[cramped, sep=large]
        \pob
            \arrow[r, "m_2"]
            \arrow[d, "m_1" swap]
            \arrow[dr, independent]
            \&
        \pob
            \arrow[d, "v"]
        \\
        \pob
            \arrow[r, "u" swap]
            \&
        \pob
    \end{tikzcd}
\]
is an independent pullback square then the span \((m_1, m_2)\) is jointly monic.
\end{lemma}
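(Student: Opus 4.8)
The plan is to derive joint monicity straight from the universal property of the independent pullback. Suppose $x, y \colon W \to P$ are two morphisms into the apex $P$ of the span $(m_1, m_2)$ satisfying $m_1 x = m_1 y$ and $m_2 x = m_2 y$, and write $f = m_1 x = m_1 y$ and $g = m_2 x = m_2 y$. If I can show that the span $(f, g)$ forms an independent square with the cospan $(u, v)$, then both $x$ and $y$ are candidates for the unique mediating morphism $e$ guaranteed by the universal property of the independent pullback, so $m_1 x = f = m_1 y$ and $m_2 x = g = m_2 y$ force $x = y$ by uniqueness. The whole statement thus reduces to a single precomposition fact.

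The crux is therefore the following whiskering lemma: precomposing an independent square at its apex by an arbitrary morphism $x$ again yields an independent square with the same cospan. I would prove this by pasting together degenerate independent squares supplied by the axioms. First, axiom \cref*{a:independence:5} makes the square with top and left legs both equal to $x$ and right and bottom legs identities independent, while axiom \cref*{a:independence:2} makes the square with top and bottom legs identities and left and right legs both equal to $m_1$ independent; since axioms \cref*{a:independence:3} and \cref*{a:independence:4} together close the independent squares under vertical pasting, stacking these two along the shared edge $1_P$ produces an independent square $F$ with top leg $x$, left leg $m_1 x$, right leg $m_1$, and bottom leg $1$. Second, the given independent pullback square has left leg $m_1$, so $F$ can be pasted horizontally to its left along the shared edge $m_1$; by axiom \cref*{a:independence:3} the result is independent, and a short edge computation shows it has legs $(m_1 x, m_2 x) = (f, g)$ over the cospan $(u, v)$, exactly as required.

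Abstractly, this whiskering step is nothing but horizontal composition with an identity-carrying square in the wide double subcategory $\IndSq(\C)$ of independent squares, so no content beyond the axioms is needed. I expect the main obstacle to be purely bookkeeping: orienting each degenerate square so that its boundary legs match those of its neighbour, and tracking which composite appears on each edge of the pasted square. Once the whiskering lemma is established, applying it to $x$ (equivalently to $y$) and invoking the uniqueness clause in the definition of independent pullback closes the argument immediately.
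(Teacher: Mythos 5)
Your proposal is correct and follows essentially the same route as the paper: the paper also reduces joint monicity to the uniqueness clause of the independent pullback, after first observing (via its Lemma~\ref{p:rev-descent}, whose proof is exactly the kind of pasting of degenerate axiom squares you describe) that whiskering the pullback square by $x$ yields an independent square over the same cospan. The only difference is that you inline a direct proof of this whiskering fact from axioms \cref*{a:independence:2}, \cref*{a:independence:3}, \cref*{a:independence:4}, and \cref*{a:independence:5}, rather than citing the lemma.
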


\begin{proof}
Let \(f\) and \(g\) be parallel morphisms such that
\[m_1f = m_1g \qquad\text{and}\qquad m_2f = m_2g.\]
Then
\[
    \begin{tikzcd}[cramped]
        \pob
            \arrow[r, "f"]
            \arrow[d, "f" swap]
            \arrow[ddrr, independent]
            \&
        \pob
            \arrow[r, "m_2"]
            \&
        \pob
            \arrow[dd, "v"]
        \\
        \pob
            \arrow[d, "m_1" swap]
            \&
            \&
        \\
        \pob
            \arrow[rr, "u" swap]
            \&
            \&
        \pob
    \end{tikzcd}
\]
by \cref{p:rev-descent}. As \((m_1, m_2)\) is an independent pullback of \((u, v)\), there is a unique morphism \(e\) such that the diagram
\[
    \begin{tikzcd}[cramped]
        \&
        \&
        \pob
            \arrow[dd, "e"]
            \arrow[dl, "f" swap]
            \arrow[dr, "f"]
        \&
        \&
    \\
        \&
        \pob
            \arrow[dl, "m_1" swap]
        \&
        \&
        \pob
            \arrow[dr, "m_2"]
        \&
    \\
        \pob
        \&
        \&
        \pob
            \arrow[ll, "m_1"]
            \arrow[rr, "m_2" swap]
        \&
        \&
        \pob
    \end{tikzcd}
\]
is commutative. It follows that \(f = e = g\).
\end{proof}

The pasting of two independent pullback squares is not, in general, an independent pullback square. This motivates the following definition.\todo{Maybe this belongs in a discussion section at the end once we add it back in?}

\begin{definition}
In an independence category, a \textit{strong independent pullback} of a cospan \((u, v)\) is a span \((p_1, p_2)\) such that
\[
    \begin{tikzcd}[cramped, sep=large]
        \pob
            \arrow[r, "p_2"]
            \arrow[d, "p_1" swap]
            \arrow[dr, independent]
            \&
        \pob
            \arrow[d, "v"]
        \\
        \pob
            \arrow[r, "u" swap]
            \&
        \pob
    \end{tikzcd},
\]
and, if
\[
    \begin{tikzcd}[cramped, sep=large]
        \pob
            \arrow[rr, "g"]
            \arrow[d, "f" swap]
            \arrow[drr, independent]
            \&
            \&
        \pob
            \arrow[d, "v"]
        \\
        \pob
            \arrow[r, "f'" swap]
            \&
        \pob
            \arrow[r, "u" swap]
            \&
        \pob
    \end{tikzcd},
\]
then there is a unique morphism \(e\) such that
\[
    \begin{tikzcd}[cramped, sep=large]
        \pob
            \arrow[from=r, "f" swap]
            \arrow[d, "f'" swap]
            \arrow[dr, independent]
            \&
        \pob
            \arrow[d, "e"]
        \\
        \pob
            \arrow[from=r, "p_1"]
            \&
        \pob
    \end{tikzcd}
    \qquad\text{and}\qquad
    \begin{tikzcd}[cramped, sep=large]
        \pob
            \arrow[dr, "g"]
            \arrow[d, "e" swap]
            \&
        \\
        \pob
            \arrow[r, "p_2" swap]
            \&
        \pob
    \end{tikzcd}
\]
is commutative.
\end{definition}

Clearly all strong independent pullbacks are independent pullbacks. An independence category satisfies the \textit{independent-pullback lemma}---the pasting of two independent pullbacks is again an independent pullback---if and only if every independent pullback is a strong independent pullback. This is a special case of the fact that a functor is a Grothendieck fibration if and only if it has weakly cartesian lifts and the composite of two weakly cartesian lifts is again weakly cartesian~\cite[Prop.~8.1.7]{borceux:1994:handbook-categorical-algebra-2}. The functor of interest is the codomain functor 
\(\IndSq(\C)_1 \to \IndSq(\C)_0\) of the double category \(\IndSq(\C)\). Its weakly cartesian lifts are precisely the independent pullback squares in \(\C\), while its (strongly) cartesian lifts are precisely the strong independent pullback squares in \(\C\).

In \cref{p:pull-pasting}, we will show that the independent-pullback lemma holds in all \textit{epi-regular} independence categories. From the discussion above, it follows that in epi-regular independence categories, all independent pullbacks are, in fact, strong independent pullbacks.

\begin{lemma}
\label{p:ind-pullback-identity}
For each morphism \(u\) in an independence category, the span \((1, u)\) is a strong independent pullback of the cospan \((u, 1)\).
\end{lemma}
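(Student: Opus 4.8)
The plan is to verify the two requirements in turn: first that the span $(1,u)$ forms an independent square with the cospan $(u,1)$, and then that it enjoys the universal property of a \emph{strong} independent pullback. Throughout I write $u \colon A \to B$, so that the pullback apex is $A$, the left leg is $p_1 = 1_A$ and the top leg is $p_2 = u$, while the right leg of the cospan is $v = 1_B$. For the defining square [top $u$, left $1$, bottom $u$, right $1$] I would start from axiom~\cref*{a:independence:2} applied to $u$, which makes [top $1$, left $u$, bottom $1$, right $u$] independent, and then apply the reflection axiom~\cref*{a:independence:4} (which is an involution on squares, and is therefore reversible) to obtain exactly the defining square.

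For the universal property, suppose given a test configuration whose outer independent square has top leg $g$, left leg the composite $f' f$ (with $f \colon T \to X$ and $f' \colon X \to A$), bottom leg $u$, and right leg $1$. Axiom~\cref*{a:independence:1} forces commutativity, i.e.\ $u f' f = g$. The mediating morphism is then pinned down: commutativity of the first of the two required diagrams reads $f' f = 1 \cdot e$, so necessarily $e = f' f$, which gives uniqueness at once. The required triangle commutes because $u e = u f' f = g$, using the commutativity just observed.

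The crux — and the only step that is not bookkeeping — is showing that the first required diagram, namely the square [top $e = f'f$, left $f$, bottom $f'$, right $1$], is independent. I would obtain it as a horizontal paste, via axiom~\cref*{a:independence:3}, of two independent squares sharing the leg $1_X$: on the left, the square [top $f$, left $f$, bottom $1$, right $1$], independent by axiom~\cref*{a:independence:5}; and on the right, the square [top $f'$, left $1$, bottom $f'$, right $1$], independent because its reflection under axiom~\cref*{a:independence:4} is the instance [top $1$, left $f'$, bottom $1$, right $f'$] of axiom~\cref*{a:independence:2}. The pasted square then has top $f' f$, left $f$, bottom $f'$ and right $1$, which is precisely the square required.

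I expect this pasting to be the main obstacle, in the precise sense that one must spot the right factorisation of the target square into an \cref*{a:independence:5}-type square followed by an \cref*{a:independence:2}-type square; the naive attempts to build it by horizontal pasting are circular, so the key is to split off the \emph{identity} right leg first. Once this factorisation is found, everything else is an application of commutativity (axiom~\cref*{a:independence:1}) together with the reflection and pasting axioms, and the uniqueness of $e$ is automatic.
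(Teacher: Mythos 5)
Your proof is correct and is essentially the paper's own argument, step for step: the defining square from (I2) together with the reflection (I4), the mediating morphism forced to be \(e = f'f\) by (I1), and the required independent square obtained by (I3)-pasting the (I5) square (top \(f\), left \(f\), right \(1\), bottom \(1\)) with the (I4)-reflection of the (I2) square (top \(f'\), left \(1\), right \(1\), bottom \(f'\)) — exactly the paper's pasting. One small slip worth noting: in the definition of strong independent pullback the factorisation lives on the \emph{cospan} (bottom) leg, so the test square is (top \(g\), left \(f\), right \(1\), bottom \(uf'\)) rather than your (top \(g\), left \(f'f\), right \(1\), bottom \(u\)); this is harmless here, because the only thing your argument extracts from that hypothesis is the commutativity \(uf'f = g\), which (I1) yields from either square.
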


\begin{proof}
By axiom \cref{a:independence:2}, it forms an independent square with the cospan. Let
\[
    \begin{tikzcd}[cramped, sep=large]
        \pob
            \arrow[rr, "g"]
            \arrow[d, "f" swap]
            \arrow[drr, independent]
            \&
            \&
        \pob
            \arrow[d, "1"]
        \\
        \pob
            \arrow[r, "f'" swap]
            \&
        \pob
            \arrow[r, "u" swap]
            \&
        \pob
    \end{tikzcd}
\]
be an independent square. Then \(uf'f = g\) by axiom \cref{a:independence:1}, and the pasting
\[
    \begin{tikzcd}[cramped, sep=large]
        \pob
            \arrow[r, "f"]
            \arrow[dr, independent]
            \arrow[d, "f" swap]
            \&
        \pob
            \arrow[r, "f'"]
            \arrow[dr, independent]
            \arrow[d, "1"]
            \&
        \pob
            \arrow[d, "1"]
        \\
        \pob
            \arrow[r, "1" swap]
            \&
        \pob
            \arrow[r, "f'" swap]
            \&
        \pob
    \end{tikzcd}
\]
is independent by axioms \cref{a:independence:2} to \cref{a:independence:5}. Conversely, if the square
\[
    \begin{tikzcd}[cramped, sep=large]
        \pob
            \arrow[r, "e"]
            \arrow[d, "f" swap]
            \arrow[dr, independent]
            \&
        \pob
            \arrow[d, "1"]
        \\
        \pob
            \arrow[r, "f'" swap]
            \&
        \pob
    \end{tikzcd},
\]
is independent, then \(e = f'f\) by axiom \cref{a:independence:1}.
\end{proof}

\subsection{Span factorisations}
\label{s:factorisation}

We use \citeauthor{carboni:bicategories-spans-relations}'s definition of ``strong epic'', see below, which is a slight generalisation of the usual one (and is equivalent to it in the presence of binary products).

\begin{definition}
\label{d:strong-epi}
A morphism \(e\) in a category is \textit{strong epic} if it is left orthogonal to all jointly monic spans, that is, for all commutative diagrams
\[
    \begin{tikzcd}[sep={0}, cramped]
        \pob
            \arrow[r, "h"]
            \arrow[d, "e" swap]
            \&[4.8em]
        \pob
            \arrow[d, "m_1" swap, shift right]
            \arrow[ddr, "m_2"]
            \&[0.3em]
        \\[3.6em]
        \pob
            \arrow[r, "f", shift left]
            \arrow[rrd, "g" swap]
            \arrow[ur, dashed, "d"]
            \&
        \pob
            \&
        \\[0.1em]
            \&
            \&
        \pob
    \end{tikzcd}
\]
where the span \((m_1, m_2)\) is jointly monic, there exists a morphism \(d\) such that \(m_1d = f\) and \(m_2d = g\). A \textit{strong epimorphism} is a morphism that is strong epic.
\end{definition}

As \((m_1, m_2)\) is jointly monic, there is only one such morphism \(d\), and also \(h = de\). The morphism \(d\) is called the \textit{diagonal filler}.

\begin{proposition}\label{p:strong-epic-props}\label{p:reg-is-strong}
	The following facts about strong epimorphisms hold:
	\begin{enumerate}
	    \item every strong epimorphism is epic,
	    \item the composite of two strong epimorphisms is again strong epic,
	    \item a morphism is invertible if and only if it is both monic and strong epic,
	    \item every regular epimorphism is strong epic.
	\end{enumerate}
\end{proposition}
\begin{proof}
For (i), let \(e\) be a strong epimorphism, and suppose that \(fe = ge\). Then the following diagram is commutative.
\[
    \begin{tikzcd}[sep={0}, cramped]
        \pob
            \arrow[r, "fe"]
            \arrow[d, "e" swap]
            \&[4.8em]
        \pob
            \arrow[d, "1" swap, shift right]
            \arrow[ddr, "1"]
            \&[0.3em]
        \\[3.6em]
        \pob
            \arrow[r, "f", shift left]
            \arrow[rrd, "g" swap]
            \&
        \pob
            \&
        \\[0.1em]
            \&
            \&
        \pob
    \end{tikzcd}
\]
Also, the span \((1, 1)\) is jointly monic. Hence there is a unique morphism \(d\) such that \(1d = f\) and \(1d = g\). In particular, \(f = d = g\).

For (ii), (iii), and (iv) adapt the proof of~\cite[Prop.~4.3.6]{borceux:1994:handbook-categorical-algebra-1}.
\end{proof}

\begin{definition}
	A (strong epic, jointly monic) factorisation \((e, m_1, m_2)\) of a span \((f, g)\) consists of a strong epimorphism \(e\) and a jointly monic span \((m_1, m_2)\) such that the following diagram is commutative.
	\[
	    \begin{tikzcd}[cramped, sep=large]
	        \&
	        \pob
	            \arrow[dl, "f" swap]
	            \arrow[dr, "g"]
	            \arrow[d, "e"]
	        \&
	        \\
	        \pob
	        \&
	        \pob
	            \arrow[l, "m_1"]
	            \arrow[r, "m_2" swap]
	        \&
	        \pob
	    \end{tikzcd}
	\]
\end{definition}

It is easy to check that (strong epic, jointly monic) factorisations of spans are unique up to isomorphism.

\begin{lemma}
\label{p:jointly-monic-is-dilator}
Let \(\D\) be a dilatory dagger category. A span \((m_1, m_2)\) of coisometries is jointly monic in \(\Coisom(\D)\) if and only if it is a dilator of \(m_2 {m_1}^\dagger\) in \(\D\).
\end{lemma}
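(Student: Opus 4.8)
The plan is to derive each implication directly from the universal property of dilators in \cref{d:dilation}, after making one preliminary observation that both directions share: writing \(r = m_2 {m_1}^\dagger\), the span \((m_1, m_2)\) is \emph{automatically} a dilation of \(r\), since its legs are coisometries by hypothesis and \(m_2 {m_1}^\dagger = r\) holds by definition. Thus in neither direction is the dilation condition in question; only the \emph{terminal} (universal) part of the dilator property is at stake.

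For the direction ``dilator implies jointly monic'', I would suppose \((m_1, m_2)\) is a dilator of \(r\) and take coisometries \(f, g\) into the apex of the span with \(m_1 f = m_1 g\) and \(m_2 f = m_2 g\). The key point is that \((m_1 f, m_2 f)\) is again a dilation of \(r\): because \(f\) is coisometric, \((m_2 f)(m_1 f)^\dagger = m_2 f f^\dagger {m_1}^\dagger = m_2 {m_1}^\dagger = r\). Applying the universal property of the dilator \((m_1, m_2)\) to this dilation yields a \emph{unique} coisometry filler, and both \(f\) and \(g\) are fillers (the latter by the assumed equalities), so \(f = g\). This establishes joint monicity in \(\Coisom(\D)\) with no real computation.

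For the converse, ``jointly monic implies dilator'', I would use that \(\D\) is dilatory to pick a dilator \((r_1, r_2)\) of \(r\); applying its universal property to the dilation \((m_1, m_2)\) gives a unique coisometry \(e\) with \(r_1 e = m_1\) and \(r_2 e = m_2\). The goal is then to show \(e\) is unitary, for once that is known, precomposing the dilator \((r_1, r_2)\) by the unitary \(e\) again yields a dilator (this is routine, and consistent with dilators being unique up to isometric isomorphism as noted in \cref{d:dilation}), so \((m_1, m_2)\) is a dilator of \(r\). A short argument shows \(e\) is monic in \(\Coisom(\D)\): if \(e p = e q\) for coisometries \(p, q\), then \(m_i p = r_i e p = r_i e q = m_i q\) for \(i = 1, 2\), whence \(p = q\) by joint monicity of \((m_1, m_2)\).

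The step I expect to be the main obstacle is upgrading this monicity to unitarity of \(e\). Monicity in \(\Coisom(\D)\) does \emph{not} directly force \(e^\dagger e = 1\), since \(e^\dagger e\) need not itself be a coisometry and so cannot be cancelled against \(1\) inside \(\Coisom(\D)\). The way around this is to invoke \cref{p:coisometry-strong-epic}, by which every morphism of \(\Coisom(\D)\)---in particular \(e\)---is strong epic, and then \cref{p:strong-epic-props}(iii), which makes a morphism that is both monic and strong epic invertible. Finally, a coisometric isomorphism is automatically unitary: from \(e e^\dagger = 1\) together with a two-sided inverse \(e^{-1}\) one computes \(e^{-1} = e^{-1} e e^\dagger = e^\dagger\). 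This completes the identification of \((m_1, m_2)\) with the chosen dilator up to a unitary, finishing the proof.
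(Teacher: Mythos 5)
Your preliminary observation and the direction ``dilator implies jointly monic'' are correct and coincide exactly with the paper's own argument: the computation \((m_2 f)(m_1 f)^\dagger = m_2 f f^\dagger {m_1}^\dagger = m_2 {m_1}^\dagger\) exhibits \((m_1 f, m_2 f)\) as a dilation, and the uniqueness clause of the dilator's universal property forces \(f = g\).

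The converse direction, however, contains a genuine circularity. You invoke \cref{p:coisometry-strong-epic} to conclude that the comparison coisometry \(e\) is strong epic; but in the paper, \cref{p:coisometry-strong-epic} is proved \emph{using} \cref{p:jointly-monic-is-dilator}---precisely the ``jointly monic implies dilator'' direction you are in the middle of establishing. (Its proof turns the given commutative square into a dilation of \(m_2 {m_1}^\dagger\) and then cites the present lemma to say that the jointly monic span \((m_1, m_2)\) is a dilator, which is what produces the diagonal filler.) Nor is there an independent proof of strong-epicness available at this point in the development: the obvious candidate filler \(h e^\dagger\) satisfies the two required equations but need not be coisometric, so the dilator property cannot be bypassed. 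As a proof to be placed where \cref{p:jointly-monic-is-dilator} sits, your argument therefore assumes what it is proving. The gap is fixable, and your monicity observation is the right ingredient: instead of appealing to strong epicness, take a dilator \((q_1, q_2)\) of \(e^\dagger e\) (which exists since \(\D\) is dilatory; it is an independent kernel pair of \(e\) by \cref{p:pullbacks-coisom}). Then \((e q_1, e q_2)\) is a dilation of \(1\), since \(e q_2 (e q_1)^\dagger = e\, e^\dagger e\, e^\dagger = 1\), so \(e q_1 = e q_2\) by \cref{p:dilator-coisometry}; your monicity of \(e\) in \(\Coisom(\D)\) then gives \(q_1 = q_2\), whence \(e^\dagger e = q_2 {q_1}^\dagger = q_1 {q_1}^\dagger = 1\), so \(e\) is isometric, hence unitary. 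This is exactly the paper's route, which applies joint monicity of \((m_1, m_2)\) to the legs of the independent kernel pair of the comparison morphism directly rather than through monicity of \(e\).
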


\begin{proof}
For the \textit{if} direction, suppose that \((m_1, m_2)\) is a dilator of \(m_2 {m_1}^\dagger\). Let \(f\) and \(g\) be parallel coisometries such that
\(m_1f = m_1g\) and \(m_2f = m_2g\). Then
\[m_2f(m_1f)^\dagger = m_2 ff^\dagger {m_1}^\dagger = m_2 {m_1}^\dagger,\]
so \((m_1f, m_2f)\) is a dilation of \(m_2 {m_1}^\dagger\). As \((m_1, m_2)\) is a dilator of \(m_2 {m_1}^\dagger\), there is a unique coisometry \(h\) such that \(m_1f = m_1h\) and \(m_2f = m_2h\). Hence \(f = g\).

We now prove the \textit{only if} direction. Suppose that \((m_1, m_2)\) is jointly monic in \(\Coisom(\D)\). Let \((p_1, p_2)\) be a dilator of \(m_2 {m_1}^\dagger\). As \((m_1, m_2)\) is a dilation of \(m_2 {m_1}^\dagger\), there is a unique coisometry \(m\) such that the diagram
\[
    \begin{tikzcd}[cramped, sep=large]
            \&
        \pob
            \arrow[dl, "m_1" swap]
            \arrow[dr, "m_2"]
            \arrow[d, "m"]
            \&
        \\
        \pob
            \&
        \pob
            \arrow[l, "p_1"]
            \arrow[r, "p_2" swap]
            \&
        \pob
    \end{tikzcd}
\]
is commutative. Let \((q_1, q_2)\) be an independent kernel pair of \(m\). Then \(mq_1 = mq_2\) by \cref{p:coisom-ind-commute}. Hence
\[m_1 q_1 = p_1 m q_1 = p_1 mq_2 = m_1 q_2,\]
and similarly \(m_2q_1 = m_2q_2\). However \((m_1, m_2)\) is jointly monic, so \(q_1 = q_2\). Hence
\[m^\dagger m = q_2 {q_1}^\dagger = q_1 {q_1}^\dagger = 1,\]
and so \(m\) is isometric. However \(m\) is also coisometric, so it is actually unitary, and thus \((m_1, m_2)\) is another dilator of \(m_2 {m_1}^\dagger\).
\end{proof}

\begin{lemma}
\label{p:coisometry-strong-epic}
If \(\D\) is a dilatory dagger category, then every morphism in \(\Coisom(\D)\) is strong epic.
\end{lemma}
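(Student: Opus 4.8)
The plan is to translate the strong epic condition of \cref{d:strong-epi} into the universal property of a dilator by way of \cref{p:jointly-monic-is-dilator}. Fix a coisometry $e$ in $\Coisom(\D)$, together with a commutative diagram as in \cref{d:strong-epi}: thus $(m_1, m_2)$ is a jointly monic span in $\Coisom(\D)$, every one of the morphisms $e$, $h$, $f$, $g$, $m_1$, $m_2$ is a coisometry, and the outer square commutes, so that $m_1 h = f e$ and $m_2 h = g e$. By \cref{p:jointly-monic-is-dilator}, the jointly monic span $(m_1, m_2)$ is a dilator of the morphism $r = m_2 {m_1}^\dagger$ in $\D$. The required diagonal filler will then be obtained directly from the universal property of this dilator.

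The key step is to verify that the span $(f, g)$ is itself a dilation of $r$, that is, that $g {f}^\dagger = r$. This is where the coisometry hypotheses do the work: since $e {e}^\dagger = 1$ and $h {h}^\dagger = 1$,
\[
    g {f}^\dagger = g e {e}^\dagger {f}^\dagger = (g e)(f e)^\dagger = (m_2 h)(m_1 h)^\dagger = m_2 h {h}^\dagger {m_1}^\dagger = m_2 {m_1}^\dagger = r.
\]
As $f$ and $g$ are coisometries, $(f, g)$ is therefore a dilation of $r$. Applying the universal property of the dilator $(m_1, m_2)$ to this dilation yields a unique coisometry $d$ with $m_1 d = f$ and $m_2 d = g$, which is exactly the diagonal filler required by \cref{d:strong-epi}. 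That $d e = h$ is then automatic from joint monicity of $(m_1, m_2)$, since $m_1 (d e) = f e = m_1 h$ and $m_2 (d e) = g e = m_2 h$.

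I do not anticipate a real obstacle: the argument is a dictionary between the ``left orthogonal to jointly monic spans'' phrasing of strong epic and the terminality packaged into dilators, with \cref{p:jointly-monic-is-dilator} supplying the bridge. The only point needing care is the central computation, where one inserts $e {e}^\dagger = 1$ to collapse $(g e)(f e)^\dagger$ to $g {f}^\dagger$ and uses $h {h}^\dagger = 1$ to rewrite $(m_2 h)(m_1 h)^\dagger$ as $m_2 {m_1}^\dagger$; it is precisely because \emph{both} $e$ and $h$ are coisometric that the two ends of the chain agree.
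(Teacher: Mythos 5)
Your proof is correct and follows essentially the same route as the paper's: both invoke \cref{p:jointly-monic-is-dilator} to recognise the jointly monic span $(m_1,m_2)$ as a dilator of $m_2{m_1}^\dagger$, verify via the identical computation (inserting $ee^\dagger = 1$ and $hh^\dagger = 1$) that $(f,g)$ is a dilation of it, and obtain the diagonal filler from the dilator's universal property. The closing remark that $de = h$ follows from joint monicity matches the observation the paper makes immediately after \cref{d:strong-epi}.
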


\begin{proof}
Consider a commutative diagram
\[
    \begin{tikzcd}[sep={0}, cramped]
        \pob
            \arrow[r, "h"]
            \arrow[d, "e" swap]
            \&[4.8em]
        \pob
            \arrow[d, "m_1" swap, shift right]
            \arrow[ddr, "m_2"]
            \&[0.3em]
        \\[3.6em]
        \pob
            \arrow[r, "f", shift left]
            \arrow[rrd, "g" swap]
            \&
        \pob
            \&
        \\[0.1em]
            \&
            \&
        \pob
    \end{tikzcd}
\]
in \(\Coisom(\D)\) where the span \((m_1, m_2)\) is jointly monic. Then
\[g f^\dagger = g ee^\dagger f^\dagger = g e (f e)^\dagger = m_2 h(m_1 h)^\dagger = m_2 hh^\dagger {m_1}^\dagger = m_2 {m_1}^\dagger,\]
so \((f, g)\) is a dilation of \(m_2{m_1}^\dagger\). However, by \cref{p:jointly-monic-is-dilator}, the span \((m_1, m_2)\) is a dilator of \(m_2{m_1}^\dagger\). Hence there is a unique morphism \(d\) such that the diagram
\[
    \begin{tikzcd}[cramped, sep=large]
            \&
        \pob
            \arrow[dl, "f" swap]
            \arrow[dr, "g"]
            \arrow[d, "d"]
            \&
        \\
        \pob
            \&
        \pob
            \arrow[l, "m_1"]
            \arrow[r, "m_2" swap]
            \&
        \pob
    \end{tikzcd}
\]
is commutative. This morphism \(d\) is the diagonal filler.
\end{proof}

\begin{proposition}\label{p:factorisation}
	If $\D$ is a dilatory dagger category, then every span in $\Coisom(\D)$	has a (strong epic, jointly monic) factorisation in $\Coisom(\D)$.
	If $G$ is a dilatory dagger functor, then $\Coisom(G)$ preserves jointly monic spans, and hence (strong epic, jointly monic) factorisations of spans.
\end{proposition}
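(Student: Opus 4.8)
The plan is to read off the factorisation directly from the dilator of $gf^\dagger$, combining \cref{p:jointly-monic-is-dilator,p:coisometry-strong-epic}. Given a span $(f,g)$ of coisometries in $\Coisom(\D)$, with $f \colon X \to A$ and $g \colon X \to B$, I would consider the morphism $gf^\dagger \colon A \to B$ in $\D$. Since $f$ and $g$ are coisometries, the span $(f,g)$ is tautologically a dilation of $gf^\dagger$. As $\D$ is dilatory, I would choose a dilator $(R, m_1, m_2)$ of $gf^\dagger$; by \cref{p:jointly-monic-is-dilator} the span $(m_1, m_2)$ is jointly monic in $\Coisom(\D)$. The universal property of the dilator then supplies a unique coisometry $e$ with $m_1 e = f$ and $m_2 e = g$, and this $e$ is strong epic by \cref{p:coisometry-strong-epic}. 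Thus $(e, m_1, m_2)$ is the required factorisation.

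For the statement about functors, let $G \colon \D \to \D'$ be a dilatory dagger functor. First I would show that $\Coisom(G)$ preserves jointly monic spans. By \cref{p:jointly-monic-is-dilator}, a span $(m_1, m_2)$ of coisometries is jointly monic exactly when it is a dilator of $m_2 {m_1}^\dagger$. Since $G$ preserves dilators and $G(m_2 {m_1}^\dagger) = (Gm_2)(Gm_1)^\dagger$, the image $(Gm_1, Gm_2)$ is a dilator of $(Gm_2)(Gm_1)^\dagger$, so \cref{p:jointly-monic-is-dilator} applied in $\D'$ shows that it is jointly monic in $\Coisom(\D')$.

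Preservation of factorisations would then follow formally. Given a factorisation $(e, m_1, m_2)$ of a span $(f, g)$ in $\Coisom(\D)$, functoriality gives $Gm_1 \cdot Ge = G(m_1 e) = Gf$ and $Gm_2 \cdot Ge = G(m_2 e) = Gg$; the span $(Gm_1, Gm_2)$ is jointly monic by the previous paragraph; and $Ge$ is automatically strong epic, since it is a morphism of $\Coisom(\D')$ and every such morphism is strong epic by \cref{p:coisometry-strong-epic}. The content here is less an obstacle than the observation that the two preceding lemmas dovetail exactly: the dilator of $gf^\dagger$ furnishes the jointly monic span, its mediating coisometry furnishes the strong epimorphism, and preservation reduces to preservation of dilators together with the blanket fact that every coisometry is strong epic.
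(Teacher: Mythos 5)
Your proposal is correct and follows essentially the same route as the paper's own proof: both construct the factorisation from the dilator of $gf^\dagger$, use \cref{p:jointly-monic-is-dilator} for joint monicity and \cref{p:coisometry-strong-epic} for strong epicness, and prove preservation by transporting dilators along $G$ and applying \cref{p:jointly-monic-is-dilator} in $\D'$. The only difference is that you spell out the final ``hence preservation of factorisations'' step, which the paper leaves implicit.
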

\begin{proof}
	Let $\D$ be a dilatory dagger category. Let \((f, g)\) be a span in \(\Coisom(\D)\), let \((m_1, m_2)\) be the dilator of \(gf^\dagger\) and let \(e\) be the unique coisometry such that \(m_1e = f\) and \(m_2e = g\). Then \((m_1, m_2)\) is jointly monic by \cref{p:jointly-monic-is-dilator}, and \(e\) is strong epic by \cref{p:coisometry-strong-epic}. 	

	Let $G \colon \D \to \D'$ be a dilatory dagger functor.
	Let $(m_1,m_2)$ be a jointly monic span in $\Coisom(\D)$.
	By \Cref{p:jointly-monic-is-dilator}, it is a dilator of $m_2 {m_1}^\dagger$. 
	Since $G$ preserves dilators, the span $(Gm_1,Gm_2)$ is a dilator of $G(m_2 {m_1}^\dagger)=(Fm_2) {(Fm_1)}^\dagger$.
	Hence, by \Cref{p:jointly-monic-is-dilator}, the span $(Gm_1,Gm_2)$ is jointly monic in $\Coisom(\D')$.
\end{proof}

\subsection{Epi-regularity}
\label{s:epi-regular}

To contextualise the definition of epi-regular independence category, let us first recall the definition of regular category. We adopt the one proposed by \textcite[Sec.~3]{carboni:bicategories-spans-relations}.\footnote{There are in fact several non-equivalent definitions in the literature. The one proposed by \citeauthor{carboni:bicategories-spans-relations} is the most inclusive. The usual one additionally assumes finite completeness.}

\begin{definition}
   \label{d:regular-cat} 
    A category is \textit{regular} if
    \begin{enumerate}[label={(R\arabic*)}]
        \item 
        every cospan has a pullback,
        \item
        \label{a:reg:fact}
        every span has a (strong epic, jointly monic) factorisation, and
        \item
        strong epimorphisms are pullback stable.
    \end{enumerate}
\end{definition}

\begin{definition}\label{d:epiregular}
	An independence category is \textit{epi-regular} if
	\begin{enumerate}[label={(E\arabic*)}]
    \item 
    \label{a:pullback}
    every cospan has an independent pullback,
    \item
    \label{a:factorisation}
    every span has a (strong epic, jointly monic) factorisation, and
    \item
    \label{a:epic}
    every morphism is strong epic.
	\end{enumerate}
 	An independence functor between epi-regular independence categories is \emph{epi-regular} if it preserves independent pullbacks and jointly monic spans.
    
    Write $\RegInd$ for the category of epi-regular independence categories and epi-regular independence functors.
\end{definition}

The axioms for an epi-regular independence category are very similar to the assumptions on a small category \(\C\) that~\textcite[Sec.~7]{simpson:equivalence-conditional-independence} used to define his \textit{atomic conditional independence relation} for atomic sheaves on \(\C\). In particular, \citeauthor{simpson:equivalence-conditional-independence} assumes that (i)
\(\C\) comes equipped with a system of independent pullbacks, (ii) each span in \(\C\) has a so-called \textit{pairing}, and (iii) each morphism in \(\C\) is epic.\todo{When we write a conclusion, maybe this better belongs there?} The notion of epi-regular independence category is also reminiscent of \citeauthor{simpson:category-theoretic-structure-independence}'s notion of \textit{conditional independence structure}~\cite{simpson:category-theoretic-structure-independence}.

\begin{proposition}\label{p:coisom}
	The functor $\Coisom$ restricts to a functor $\DilDag\to\RegInd$.
\end{proposition}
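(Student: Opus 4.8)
The plan is to assemble the required structure entirely from results already established in this section and the previous one: essentially all the substance has been proved, so what remains is to recognise that the three defining axioms of an epi-regular independence category, together with the two preservation conditions for an epi-regular independence functor, each correspond directly to an earlier statement. The functoriality of the restriction is inherited from the ambient functor $\Coisom \colon \Dag \to \Ind$ of \cref{p:dildag-indep}, since restricting to $\DilDag$ changes neither the definition of $\Coisom$ on objects nor its definition on morphisms; the only genuine content is that $\Coisom$ sends objects and morphisms of $\DilDag$ into $\RegInd$.

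First I would treat the action on objects. Given a dilatory dagger category $\D$, we already know from \cref{p:dildag-indep} that $\Coisom(\D)$ is an independence category, so it suffices to verify the three axioms of \cref{d:epiregular}. Axiom \cref{a:pullback}, that every cospan has an independent pullback, is precisely the first assertion of \cref{p:pullbacks-coisom}. Axiom \cref{a:factorisation}, that every span has a (strong epic, jointly monic) factorisation, is precisely the first assertion of \cref{p:factorisation}. Axiom \cref{a:epic}, that every morphism is strong epic, is precisely \cref{p:coisometry-strong-epic}. Hence $\Coisom(\D)$ is an epi-regular independence category.

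Next I would treat the action on morphisms. Given a dilatory dagger functor $G \colon \D \to \D'$, we know from \cref{p:dildag-indep} that $\Coisom(G)$ is an independence functor, so it remains only to check that it preserves independent pullbacks and jointly monic spans. Preservation of independent pullbacks is the second assertion of \cref{p:pullbacks-coisom}, and preservation of jointly monic spans is the second assertion of \cref{p:factorisation}. Thus $\Coisom(G)$ is an epi-regular independence functor. There is no real obstacle to overcome: this proposition is a bookkeeping statement whose content was distributed across the lemmas and propositions of the present section, so the proof amounts to citing each of them in turn.
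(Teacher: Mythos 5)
Your proposal is correct and follows exactly the paper's own proof: the object part cites \cref{p:pullbacks-coisom}, \cref{p:factorisation}, and \cref{p:coisometry-strong-epic} for the three axioms of \cref{d:epiregular}, and the morphism part cites \cref{p:dildag-indep}, \cref{p:pullbacks-coisom}, and \cref{p:factorisation} for the preservation conditions. The only difference is that you spell out the bookkeeping in more detail than the paper does.
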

\begin{proof}
	If $\D$ is a dilatory dagger category, then the independence category $\Coisom(\D)$ is epi-regular by \cref{p:pullbacks-coisom,p:coisometry-strong-epic,p:factorisation}.
	If $G$ is a dilatory dagger functor, then the independence functor $\Coisom(G)$ is epi-regular by \cref{p:dildag-indep,p:pullbacks-coisom,p:factorisation}. 
\end{proof}

The following partial converse of \cref{p:rev-descent} has previously~\cite[Def.~6.1]{simpson:equivalence-conditional-independence} been called the \textit{descent} property.

\begin{lemma}
    \label{p:descent}
    In an epi-regular independence category,
\[
    \begin{tikzcd}[cramped]
        \pob
            \arrow[ddrr, independent]
            \arrow[d, "h" swap]
            \arrow[r, "h"]
            \&
        \pob
            \arrow[r, "g"]
            \&
        \pob
            \arrow[dd, "v"]
        \\
        \pob
            \arrow[d, "f" swap]
            \&
            \&
        \\
        \pob
            \arrow[rr, "u" swap]
            \&
            \&
        \pob
    \end{tikzcd}
    \qquad\text{implies}\qquad
    \begin{tikzcd}[cramped, sep=large]
        \pob
            \arrow[r, "g"]
            \arrow[d, "f" swap]
            \arrow[dr, independent]
            \&
        \pob
            \arrow[d, "v"]
        \\
        \pob
            \arrow[r, "u" swap]
            \&
        \pob
    \end{tikzcd}
    .
\]
\end{lemma}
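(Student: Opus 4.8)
The plan is to route the argument through the independent pullback of the cospan $(u, v)$ and then appeal to the reverse-descent property \cref{p:rev-descent}. The crux is to produce a morphism $d$ out of the common domain $X$ of $f$ and $g$ that witnesses the factorisation of the span $(f, g)$ through this independent pullback; once $d$ is available, independence of the inner square follows formally.

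First I would invoke axiom \cref{a:pullback} to choose an independent pullback $(P, p_1, p_2)$ of $(u, v)$, whose legs are jointly monic by \cref{p:ind-pull-is-joint-monic}. Applying the universal property of this independent pullback to the hypothesised outer independent square, whose legs are the composites $fh$ and $gh$, yields a unique morphism $e$ with $p_1 e = fh$ and $p_2 e = gh$. Since every morphism is strong epic in an epi-regular independence category (axiom \cref{a:epic}), the morphism $h$ is strong epic; the equations $p_1 e = fh$ and $p_2 e = gh$ present a commutative square against the jointly monic span $(p_1, p_2)$, so the orthogonality in \cref{d:strong-epi} supplies a diagonal filler $d$ satisfying $p_1 d = f$ and $p_2 d = g$. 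Finally, the independent pullback square on $(p_1, p_2)$ over $(u, v)$ together with $d$ feeds into \cref{p:rev-descent}: precomposing that independent square by $d$ shows that $(p_1 d, p_2 d) = (f, g)$ forms an independent square with $(u, v)$, which is the desired conclusion.

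I expect the construction of the diagonal filler $d$ to be the main obstacle, the key realisation being that strong epicness of $h$ is exactly the lifting property needed against the jointly monic legs of the independent pullback. The remaining ingredients are routine: existence and joint monicity of the independent pullback, its universal property producing $e$, and the reverse-descent lemma converting the factorisation $f = p_1 d$, $g = p_2 d$ back into an independent square. In particular, no separate check that the inner square commutes is needed, since \cref{p:rev-descent} directly outputs a genuine independent square.
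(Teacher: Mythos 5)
Your proposal is correct and follows exactly the paper's own argument: take the independent pullback $(p_1,p_2)$ of $(u,v)$, use its universal property on the outer independent square to get $e$, use strong epicness of $h$ (axiom \labelcref{a:epic}) against the jointly monic span $(p_1,p_2)$ (\cref{p:ind-pull-is-joint-monic}) to obtain the diagonal filler $d$ with $p_1d=f$, $p_2d=g$, and conclude by \cref{p:rev-descent}. Nothing to add.
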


\begin{proof}
Let \((p_1, p_2)\) be an independent pullback of \((u, v)\). Let \(e\) be the unique morphism such that the diagram
\[
    \begin{tikzcd}[cramped]
        \&
        \&
        \pob
            \arrow[dd, "e"]
            \arrow[dl, "h" swap]
            \arrow[dr, "h"]
        \&
        \&
    \\
        \&
        \pob
            \arrow[dl, "f" swap]
        \&
        \&
        \pob
            \arrow[dr, "g"]
        \&
    \\
        \pob
        \&
        \&
        \pob
            \arrow[ll, "p_1"]
            \arrow[rr, "p_2" swap]
        \&
        \&
        \pob
    \end{tikzcd}
\]
is commutative. Now \(h\) is strong epic by axiom \cref{a:epic}, the span \((p_1, p_2)\) is jointly monic by \cref{p:ind-pull-is-joint-monic}, and the diagram
\[
    \begin{tikzcd}[sep={0}, cramped]
        \pob
            \arrow[r, "e"]
            \arrow[d, "h" swap]
            \&[4.8em]
        \pob
            \arrow[d, "p_1" swap, shift right]
            \arrow[ddr, "p_2"]
            \&[0.3em]
        \\[3.6em]
        \pob
            \arrow[r, "f", shift left]
            \arrow[rrd, "g" swap]
            \arrow[ur, dashed, "d"]
            \&
        \pob
            \&
        \\[0.1em]
            \&
            \&
        \pob
    \end{tikzcd}
\]
is commutative, so it has a diagonal filler \(d\). The result now follows from \cref{p:rev-descent}.
\end{proof}

The following partial converse to \cref{p:ind-pull-is-joint-monic} is an analogue of \cref{p:jointly-monic-is-dilator} for epi-regular independence categories.

\begin{lemma}
\label{p:joint-monic-is-ind-pull}
In an epi-regular independence category, an independent square
\[
    \begin{tikzcd}[cramped, sep=large]
        \pob
            \arrow[r, "m_2"]
            \arrow[d, "m_1" swap]
            \arrow[dr, independent]
            \&
        \pob
            \arrow[d, "v"]
        \\
        \pob
            \arrow[r, "u" swap]
            \&
        \pob
    \end{tikzcd}
\]
is an independent pullback if the span \((m_1, m_2)\) is jointly monic.
\end{lemma}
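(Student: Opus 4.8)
The plan is to compare the given independent square against a genuine independent pullback of the cospan $(u, v)$ and show that the comparison map is an isomorphism. Since every cospan in an epi-regular independence category has an independent pullback by axiom \cref{a:pullback}, I would first fix an independent pullback $(p_1, p_2)$ of $(u, v)$. The given square exhibits $(m_1, m_2)$ as forming an independent square with the same cospan, so the universal property of $(p_1, p_2)$ supplies a unique morphism $e$ with $p_1 e = m_1$ and $p_2 e = m_2$. Everything then reduces to showing that $e$ is invertible, because an invertible comparison map transfers the universal property of $(p_1, p_2)$ to $(m_1, m_2)$.

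To prove that $e$ is invertible, I would appeal to the characterisation in \cref{p:strong-epic-props} that a morphism is invertible exactly when it is both monic and strong epic. The morphism $e$ is strong epic at once by axiom \cref{a:epic}, since \emph{every} morphism in an epi-regular independence category is strong epic. For monicity, I would argue directly: if $ea = eb$ for parallel morphisms $a$ and $b$, then $m_1 a = p_1 e a = p_1 e b = m_1 b$ and, likewise, $m_2 a = m_2 b$, whence joint monicity of $(m_1, m_2)$ gives $a = b$. Thus $e$ is monic and strong epic, so it is an isomorphism.

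It remains to transport the universal property along $e$. Given any independent square with the same cospan $(u, v)$ and some span $(f, g)$, the independent pullback $(p_1, p_2)$ yields a unique $k$ with $p_1 k = f$ and $p_2 k = g$; then $e^{-1} k$ satisfies $m_1 (e^{-1} k) = p_1 k = f$ and $m_2 (e^{-1} k) = g$, and it is the unique such morphism because $e$ is invertible and $k$ is unique. This exhibits $(m_1, m_2)$ as an independent pullback of $(u, v)$.

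I do not expect a serious obstacle: the whole argument hinges on the characterisation of isomorphisms as the monic strong epimorphisms (\cref{p:strong-epic-props}) combined with the fact that every morphism is strong epic (axiom \cref{a:epic}), so that checking monicity of $e$ via joint monicity of $(m_1, m_2)$ is the only piece of real content. The one point warranting a little care is the final transfer of the universal property across the isomorphism, which is otherwise routine.
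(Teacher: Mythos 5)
Your proof is correct, and it follows the same overall strategy as the paper's: fix an independent pullback \((p_1, p_2)\) of \((u, v)\), form the comparison morphism \(e\) with \(p_1 e = m_1\) and \(p_2 e = m_2\), and show that \(e\) is invertible. Where you genuinely diverge is in how invertibility is established. The paper runs two diagonal-filler arguments: since \(e\) is strong epic by axiom \cref{a:epic} and \((m_1, m_2)\) is jointly monic, there is a filler \(d\) with \(m_1 d = p_1\) and \(m_2 d = p_2\); a second filler in the transposed square, which needs \((p_1, p_2)\) to be jointly monic (\cref{p:ind-pull-is-joint-monic}), then yields a two-sided inverse, with both composites identified via joint monicity. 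You instead observe that \(e\) is monic outright---\(ea = eb\) forces \(m_1 a = m_1 b\) and \(m_2 a = m_2 b\), so \(a = b\) by the joint monicity hypothesis---and strong epic by axiom \cref{a:epic}, and then invoke \cref{p:strong-epic-props}~(iii). Your route is slightly leaner: it uses only the hypothesis on \((m_1, m_2)\) and never needs joint monicity of independent pullbacks, and it replaces two explicit lifting arguments by one citation. The trade-off is that the real content is outsourced to \cref{p:strong-epic-props}~(iii), whose own proof is a lifting argument of exactly the kind the paper performs inline, so the two proofs are close cousins rather than fundamentally different. Your explicit transfer of the universal property across the isomorphism in the final paragraph is also correct and is a step the paper leaves implicit.
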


\begin{proof}
Let \((p_1, p_2)\) be an independent pullback of \((u, v)\), and let \(m\) be the unique morphism such that
\[
    \begin{tikzcd}[cramped, sep=large]
            \&
        \pob
            \arrow[dl, "m_1" swap]
            \arrow[dr, "m_2"]
            \arrow[d, "m"]
            \&
        \\
        \pob
            \&
        \pob
            \arrow[l, "p_1"]
            \arrow[r, "p_2" swap]
            \&
        \pob
    \end{tikzcd}
\]
is commutative. As \(m\) is strong epic and \((m_1, m_2)\) is jointly monic, the diagonal filler \(d\) in the diagram
\[
    \begin{tikzcd}[sep={0}, cramped]
        \pob
            \arrow[r, "1"]
            \arrow[d, "m" swap]
            \&[4.8em]
        \pob
            \arrow[d, "m_1" swap, shift right]
            \arrow[ddr, "m_2"]
            \&[0.3em]
        \\[3.6em]
        \pob
            \arrow[r, "p_1", shift left]
            \arrow[rrd, "p_2" swap]
            \arrow[ur, dashed, "d"]
            \&
        \pob
            \&
        \\[0.1em]
            \&
            \&
        \pob
    \end{tikzcd}
\]
exists. Also \((p_1, p_2)\) is jointly monic by \cref{p:ind-pullback-identity}, and \(1\) is strong epic, so the diagonal filler \(d'\) in the diagram
\[
    \begin{tikzcd}[sep={0}, cramped]
        \pob
            \arrow[r, "1"]
            \arrow[d, "m" swap]
            \&[4.8em]
        \pob
            \arrow[d, "m_1" swap, shift right]
            \arrow[ddr, "m_2"]
            \&[0.3em]
        \\[3.6em]
        \pob
            \arrow[r, "p_1", shift left]
            \arrow[rrd, "p_2" swap]
            \arrow[from=ur, dashed, "d'" swap]
            \&
        \pob
            \&
        \\[0.1em]
            \&
            \&
        \pob
    \end{tikzcd}
\]
also exists. Now \(d'd = 1\) because \((m_1, m_2)\) is jointly monic, and \(dd' = 1\) because \((p_1, p_2)\) is jointly monic, so \(d\) is actually invertible with \(d^{-1} = d'\). Hence \((m_1, m_2)\) is another independent pullback of \((u, v)\).
\end{proof}

We are now ready to prove that epi-regular independence categories satisfy an independent-pullback analogue of the well-known pullback lemma.

\begin{lemma}[Independent-pullback lemma]
\label{p:pull-pasting}
Consider a diagram
\[
    \begin{tikzcd}[cramped, sep=large]
        \pob
            \arrow[r, "a"]
            \arrow[d, "f" swap]
            \arrow[dr, independent]
            \&
        \pob
            \arrow[r, "b"]
            \arrow[d, "g"]
            \arrow[dr, independent]
            \&
        \pob
            \arrow[d, "h"]
        \\
        \pob
            \arrow[r, "s" swap]
            \&
        \pob
            \arrow[r, "t" swap]
            \&
        \pob
    \end{tikzcd}
\]
in an epi-regular independence category. If the right-hand square is an independent pullback, then the left-hand square is an independent pullback if and only if the outer rectangle is an independent pullback.
\end{lemma}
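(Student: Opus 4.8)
The plan is to sidestep a direct chase of the universal property and instead reduce everything to joint monicity. The key leverage is that, in an epi-regular independence category, \cref{p:ind-pull-is-joint-monic,p:joint-monic-is-ind-pull} together say that an \emph{independent} square is an independent pullback if and only if its span is jointly monic. So I would first record that all three relevant squares are independent: the left-hand square by hypothesis, the right-hand square because it is an independent pullback, and hence the outer rectangle by the pasting axiom~\cref{a:independence:3}. Writing $a,b$ for the horizontal maps and $f,g,h$ for the vertical maps as in the statement, the left-hand square (as a candidate independent pullback of $(s,g)$) has span $(f,a)$, and the outer rectangle (as a candidate independent pullback of $(ts,h)$) has span $(f,ba)$. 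The lemma then reduces to the purely elementary equivalence that $(f,a)$ is jointly monic if and only if $(f,ba)$ is jointly monic, under the standing fact—obtained by applying \cref{p:ind-pull-is-joint-monic} to the right-hand independent pullback—that $(g,b)$ is jointly monic.

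The joint-monicity equivalence is routine. For the forward direction, assuming $(f,a)$ jointly monic, take parallel $x,y$ with $fx=fy$ and $bax=bay$; commutativity of the left-hand square gives $gax = sfx = sfy = gay$, so joint monicity of $(g,b)$ forces $ax=ay$, and then joint monicity of $(f,a)$ gives $x=y$. For the reverse direction, assuming $(f,ba)$ jointly monic, parallel $x,y$ with $fx=fy$ and $ax=ay$ satisfy $bax=bay$, whence $x=y$ immediately; note this direction uses neither the right-hand pullback nor commutativity, matching the fact that \enquote{outer implies left} should be the cheaper half. Translating back through the characterisation, the left-hand square is an independent pullback exactly when $(f,a)$ is jointly monic, and the outer rectangle is an independent pullback exactly when $(f,ba)$ is jointly monic, so the result follows.

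The main thing to get right is not any computation but the bookkeeping that makes the characterisation applicable at both ends: one must confirm that the outer rectangle is genuinely independent (via~\cref{a:independence:3}) so that \cref{p:joint-monic-is-ind-pull} applies to it, and one must locate precisely where the hypothesis that the right-hand square is an independent pullback—rather than merely independent—is used, namely in extracting joint monicity of $(g,b)$ through \cref{p:ind-pull-is-joint-monic}. I would flag that the alternative, a direct verification of the universal property, is the genuinely harder route, since it would require showing that each intermediate cone factoring through the right-hand pullback is itself independent, forcing appeals to \cref{p:descent,p:rev-descent}; the joint-monicity argument avoids this entirely.
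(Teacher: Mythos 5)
Your proposal is correct and follows essentially the same route as the paper's own proof: reduce via \cref{p:ind-pull-is-joint-monic,p:joint-monic-is-ind-pull} to the equivalence that \((f,a)\) is jointly monic if and only if \((f,ba)\) is, using joint monicity of \((g,b)\) (from the right-hand independent pullback) in exactly the same place. Your explicit remark that the outer rectangle is independent by axiom~\cref{a:independence:3} makes precise a step the paper leaves implicit, but the argument is otherwise identical.
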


\begin{proof}
By \cref{p:ind-pull-is-joint-monic,p:joint-monic-is-ind-pull}, it suffices to show that \((f, a)\) is jointly monic if and only if \((f, ba)\) is jointly monic.

For the \textit{if} direction, suppose that \((f, ba)\) is jointly monic. Let \(u\) and \(v\) be parallel morphisms such that
\[fu = fv \qquad\text{and}\qquad au = av.\]
Then \(bau = bav\) and thus \(u = v\) because \((f, ba)\) is jointly monic.

For the \textit{only if} direction, suppose that \((f, a)\) is jointly monic. Let \(u\) and \(v\) be parallel morphisms such that
\[fu = fv \qquad\text{and}\qquad bau = bav.\]
As independent squares are commutative,
\[gau = sfu = sfv = gav.\]
By \cref{p:ind-pull-is-joint-monic}, the span \((g, b)\) is jointly monic, so it follows that \(au = av\). As such, \(u = v\) since the span \((f, a)\) is jointly monic.
\end{proof}

\begin{lemma}
\label{p:monos}
The following properties of a morphism \(m\) in an epi-regular independence category are equivalent:
\begin{enumerate}
    \item
    \(m\) is monic,
    \item there exists a morphism \(f\) such that
    \[
    \begin{tikzcd}[cramped, sep=large]
        \pob
            \arrow[r, "f"]\arrow[d, "f" swap]\arrow[dr, independent]
            \&
        \pob
            \arrow[d, "m"]
            \\
        \pob
            \arrow[r, "m" swap]
            \&
        \pob
        \end{tikzcd},
    \]
    \item the span \((1, 1)\) is an independent kernel pair of \(m\), and
    \item \(m\) is invertible.
\end{enumerate}
\end{lemma}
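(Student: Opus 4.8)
The plan is to establish the cycle $(1)\Rightarrow(4)\Rightarrow(3)\Rightarrow(2)\Rightarrow(1)$. Two of the links are immediate. For $(1)\Rightarrow(4)$, every morphism is strong epic by \cref{a:epic}, so a monic $m$ is at once monic and strong epic, hence invertible by \cref{p:strong-epic-props}. For $(3)\Rightarrow(2)$, if $(1,1)$ is an independent kernel pair of $m$ then in particular $(1,1)$ forms an independent square with the cospan $(m,m)$, so $(2)$ holds with $f=1$.

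For $(4)\Rightarrow(3)$, I would take an independent pullback $(p_1,p_2)$ of $(m,m)$, which exists by \cref{a:pullback}. Independent squares commute, so $mp_1=mp_2$, and since $m$ is invertible (hence monic) this forces $p_1=p_2=:p$. The span $(p,p)$ is jointly monic by \cref{p:ind-pull-is-joint-monic}, which says precisely that $p$ is monic; thus $p$ is invertible, again by \cref{a:epic} and \cref{p:strong-epic-props}. Precomposing the square with $p^{-1}$ and using \cref{p:rev-descent} to keep it independent, I conclude that $(1,1)$ is itself an independent pullback of $(m,m)$, i.e.\ an independent kernel pair.

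The substance is in $(2)\Rightarrow(1)$. First I would recover $(3)$: given an independent square with span $(f,f)$ and cospan $(m,m)$, \cref{p:descent}, applied with common prefix $f$ (writing each leg $f$ as $1\cdot f$), yields that $(1,1)$ forms an independent square with $(m,m)$; as $(1,1)$ is trivially jointly monic, \cref{p:joint-monic-is-ind-pull} promotes this to: $(1,1)$ is an independent pullback of $(m,m)$. It remains to deduce that $m$ is monic. Let $u,v$ be parallel with $mu=mv$. The naive attempt---feeding the commutative square with span $(u,v)$ and cospan $(m,m)$ into the universal property of the independent kernel pair---fails, because that square need not be \emph{independent}, whereas the universal property of an independent pullback constrains only independent squares. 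Instead I would build independent pullbacks of the common cospan $(mu,m)=(mv,m)$: by \cref{p:ind-pullback-identity}, $(1,u)$ is a (strong) independent pullback of $(u,1)$; pasting it onto the independent pullback $(1,1)$ of $(m,m)$ and applying the independent-pullback lemma \cref{p:pull-pasting} shows that $(1,u)$ is an independent pullback of $(mu,m)$, and likewise $(1,v)$ is an independent pullback of $(mv,m)$. Since $mu=mv$, these are independent pullbacks of one and the same cospan, so the universal property of the latter, applied to the (independent) square $(1,u)$, produces $e$ with $1\cdot e=1$ and $v\cdot e=u$; hence $e=1$ and $u=v$.

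The main obstacle is exactly this last deduction of monicity: the restricted universal property of independent pullbacks blocks the usual kernel-pair argument, since one cannot show that an arbitrary commutative square $(u,v;m,m)$ with $mu=mv$ is independent (indeed it typically is not). The fix is the pasting trick above, which relocates the problem to a universal property of the independent pullback of the single cospan $(mu,m)$, rather than to independence of the square $(u,v;m,m)$.
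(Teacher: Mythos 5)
Your proof is correct, and every lemma you invoke (\cref{p:rev-descent,p:descent,p:ind-pull-is-joint-monic,p:joint-monic-is-ind-pull,p:ind-pullback-identity,p:pull-pasting,p:strong-epic-props}) is established in the paper before \cref{p:monos}, so nothing is circular. Structurally you differ from the paper, which proves (i)$\Rightarrow$(ii)$\Rightarrow$(iii)$\Rightarrow$(i) and then (i)$\Leftrightarrow$(iv): your link (1)$\Rightarrow$(4) is the paper's argument for (i)$\Leftrightarrow$(iv), and your descent-plus-joint-monicity step inside (2)$\Rightarrow$(1) is exactly the paper's (ii)$\Rightarrow$(iii). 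The genuine divergence is in extracting monicity of $m$ from the independent kernel pair $(1,1)$. The paper does this by assembling an explicit $3\times 2$ pasting \cref{e:pasting-kernel-eq} of independent squares, built from axioms \cref{a:independence:2} to \cref{a:independence:5} together with the hypothesis (iii), whose outer square has span $(u,v)$ and cospan $(m,m)$; it then feeds that square into the universal property of $(1,1)$ --- precisely the ``naive attempt'' you set aside. Your route instead pastes independent \emph{pullbacks}: by \cref{p:ind-pullback-identity} and the independent-pullback lemma \cref{p:pull-pasting}, both $(1,u)$ and $(1,v)$ are independent pullbacks of the single cospan $(mu,m)$, and uniqueness of the comparison morphism forces $u=v$. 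Both arguments are sound; the paper's works at the level of the raw axioms, whereas yours is shorter because it leverages \cref{p:pull-pasting}, which the paper proves immediately beforehand.

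Two corrections. First, the ``main obstacle'' you describe is not real: under hypothesis (iii), the commutative square with span $(u,v)$ and cospan $(m,m)$ \emph{is} always independent --- establishing exactly this is the content of the pasting \cref{e:pasting-kernel-eq} --- so your parenthetical ``indeed it typically is not'' is false, and your pasting trick is an alternative to proving independence of that square rather than a necessary repair. Second, in (4)$\Rightarrow$(3), after \cref{p:rev-descent} gives that $(1,1)$ forms an independent \emph{square} with $(m,m)$, you still need \cref{p:joint-monic-is-ind-pull} (or a transport of the universal property along the isomorphism $p$) to conclude that it is an independent \emph{pullback}; you perform this promotion explicitly in (2)$\Rightarrow$(1) but elide it here.
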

    
\begin{proof}
Let \((k_1, k_2)\) be an independent kernel pair of \(m\). Then \(mk_1 = mk_2\).

For (i) implies (ii), if \(m\) is monic, then \(k_1 = k_2\), so we may set \(f = k_1\).

For (ii) implies (iii), suppose that such a morphism \(f\) exists. The square 
\[
    \begin{tikzcd}[cramped, sep=large]
        \pob
            \arrow[r, "1"]\arrow[d, "1" swap]\arrow[dr, independent]
            \&
        \pob
            \arrow[d, "m"]
            \\
        \pob
            \arrow[r, "m" swap]
            \&
        \pob
    \end{tikzcd}
\]
is independent by \cref{p:descent}, and so is actually an independent pullback square by \cref{p:joint-monic-is-ind-pull} because \((1, 1)\) is jointly monic.

For (iii) implies (i), suppose that \((1, 1)\) is an independent kernel pair of \(m\). Let \(u\) and \(v\) be parallel morphisms such that \(mu = mv\). Then the pasting
\begin{equation}
    \label{e:pasting-kernel-eq}
    \begin{tikzcd}[cramped, sep=large]
        \pob
            \arrow[r, "u"]
            \arrow[d, "1" swap]
            \arrow[dr, independent]
            \&
        \pob
            \arrow[r, "1"]
            \arrow[d, "1"]
            \arrow[dr, independent]
            \&
        \pob
            \arrow[d, "m"]
        \\
        \pob
            \arrow[r, "u"]
            \arrow[d, "1" swap]
            \arrow[drr, independent]
            \&
        \pob
            \arrow[r, "m"]
            \&
        \pob
            \arrow[d, "1"]
        \\
        \pob
            \arrow[r, "v"]
            \arrow[d, "v" swap]
            \arrow[dr, independent]
            \&
        \pob
            \arrow[r, "m"]
            \arrow[d, "1"]
            \arrow[dr, independent]
            \&
        \pob
            \arrow[d, "1"]
        \\
        \pob
            \arrow[r, "1" swap]
            \&
        \pob
            \arrow[r, "m" swap]
            \&
        \pob
    \end{tikzcd}
\end{equation}
is independent. Hence there is a unique morphism \(w\) such that the diagram
\[
    \begin{tikzcd}[cramped, sep=large]
            \&
        \pob
            \arrow[dl, "u" swap]
            \arrow[dr, "v"]
            \arrow[d, "w"]
            \&
        \\
        \pob
            \&
        \pob
            \arrow[l, "1"]
            \arrow[r, "1" swap]
            \&
        \pob
    \end{tikzcd}
\]
is commutative. In particular, \(u = w = v\).

The equivalence of (i) and (iv) follows from \cref{p:strong-epic-props} (iii) because \(m\) is strong epic by \cref{a:epic}.
\end{proof}

In epi-regular independence categories, every morphism is a regular epimorphism.

\begin{proposition}
\label{p:strong-is-regular}
In an epi-regular independence category, every morphism is the coequaliser of its independent kernel pair.
\end{proposition}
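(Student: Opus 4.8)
The plan is to verify the universal property of the coequaliser directly. Write $m \colon A \to B$ and let $(k_1, k_2)$ be an independent kernel pair of $m$, which exists by axiom \cref{a:pullback}; by definition the square $(k_1, k_2; m, m)$ is independent, hence commutative by axiom \cref{a:independence:1}, so $m k_1 = m k_2$. Since $m$ is strong epic by axiom \cref{a:epic}, it is epic by \cref{p:strong-epic-props}, so any morphism $t$ with $t m = w$ is unique. It therefore remains to show that each morphism $w$ with $w k_1 = w k_2$ factors through $m$.

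Given such a $w$, I would take a (strong epic, jointly monic) factorisation $(e, n_1, n_2)$ of the span $(m, w)$, which exists by axiom \cref{a:factorisation}; thus $e$ is strong epic, $(n_1, n_2)$ is jointly monic, $n_1 e = m$, and $n_2 e = w$. If $n_1$ is invertible, then $t = n_2 {n_1}^{-1}$ satisfies $t m = n_2 {n_1}^{-1} n_1 e = n_2 e = w$, as required. Now $n_1$ is strong epic by axiom \cref{a:epic}, so by \cref{p:strong-epic-props} it suffices to prove that $n_1$ is monic. The first key observation is that $e$ already equalises $k_1$ and $k_2$: indeed $n_1 e k_1 = m k_1 = m k_2 = n_1 e k_2$ and $n_2 e k_1 = w k_1 = w k_2 = n_2 e k_2$, so joint monicity of $(n_1, n_2)$ forces $e k_1 = e k_2$.

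To finish, let $(l_1, l_2)$ be an independent kernel pair of $n_1$, again by axiom \cref{a:pullback}. The plan is to pull this independent kernel pair back along $e$ on each of its legs and to invoke the independent-pullback lemma \cref{p:pull-pasting} to identify the resulting iterated independent pullback with the independent kernel pair $(k_1, k_2)$ of $m = n_1 e$. This produces a comparison morphism $c$ satisfying $l_1 c = e k_1$ and $l_2 c = e k_2$. Since $e k_1 = e k_2$, we obtain $l_1 c = l_2 c$; and $c$, being a morphism, is strong epic by axiom \cref{a:epic}, hence epic by \cref{p:strong-epic-props}, so $l_1 = l_2$. As $(l_1, l_2)$ is jointly monic by \cref{p:ind-pull-is-joint-monic}, the equality $l_1 = l_2$ makes this common morphism monic, and therefore invertible by axiom \cref{a:epic} together with \cref{p:strong-epic-props}. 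Hence $(1, 1)$ is also an independent kernel pair of $n_1$, so $n_1$ is monic by \cref{p:monos}, completing the argument.

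I expect the main obstacle to be this last step: exhibiting $\ker m$ as a pasting of the independent pullbacks used to pull $(l_1, l_2)$ back along $e$, so that the comparison $c$ arises as a \emph{structural} leg of the iterated pullback rather than through a universal property. This matters because the universal property of an independent pullback applies only to \emph{independent} squares, whereas the square $(e k_1, e k_2; n_1, n_1)$ is a priori merely commutative; one cannot simply map $(k_1, k_2)$ into the independent kernel pair of $n_1$ by hand. Managing the bookkeeping---applying \cref{p:pull-pasting} once for each leg of $(l_1, l_2)$ and tracking the induced maps so that $l_1 c = e k_1$ and $l_2 c = e k_2$ hold by construction---is where the real work lies.
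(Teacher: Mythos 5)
Your proof is correct and follows essentially the same route as the paper's: factor the span $(m,w)$ as a strong epimorphism followed by a jointly monic span, reduce to invertibility of $n_1$, compare the independent kernel pair of $n_1$ with that of $m = n_1 e$ via the iterated independent pullbacks and the independent-pullback lemma (\cref{p:pull-pasting}), and conclude with \cref{p:monos}. The only cosmetic difference is that you establish $ek_1 = ek_2$ upfront via joint monicity of $(n_1, n_2)$ and then transport it along the structural equations $l_i c = ek_i$, whereas the paper runs the corresponding chain of equalities through $m_1$ and $m_2$ at the end; the underlying grid construction (the paper's $(p_1,p_2)$, $(q_1,q_2)$, $(r_1,r_2)$, with your comparison $c$ being the diagonal composite $p_2 r_1$) is identical.
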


The following proof generalises the \textit{if} direction of \cite[Prop.~2.2.2~(2)]{borceux:1994:handbook-categorical-algebra-2}.

\begin{proof}
Let \(f\) be a morphism, let \((k_1, k_2)\) be an independent kernel pair of \(f\), and let \(g\) be a morphism such that \(gk_1 = gk_2\). Let \((e, m_1, m_2)\) be a (strong epic, jointly monic) factorisation of the span \((f, g)\). We will show that \(m_1\) is invertible, so that \(m_2{m_1}^{-1}\) is the unique morphism \(h\) such that \(g = hf\). Let us construct the morphisms depicted in \cref{f:coeq-kern}.
\begin{diagram}
    \begin{tikzcd}[cramped, sep=large]
    \pob
        \arrow[r, "r_2" swap]
        \arrow[d, "r_1"]
        \arrow[rr, "k_2", bend left, shift left]
        \arrow[dd, "k_1" swap, bend right, shift right]
        \arrow[dr, independent]
        \&
    \pob
        \arrow[r, "q_2" swap]
        \arrow[d, "q_1"]
        \arrow[dr, independent]
        \&
    \pob
        \arrow[d, "e" swap]
        \arrow[dd, "f", bend left, shift left]
    \\
    \pob
        \arrow[r, "p_2"]
        \arrow[d, "p_1"]
        \arrow[dr, independent]
        \&
    \pob
        \arrow[r, "j_2"]
        \arrow[d, "j_1"]
        \arrow[dr, independent]
        \&
    \pob
        \arrow[d, "m_1" swap]
    \\
    \pob
        \arrow[r, "e"]
        \arrow[rr, "f" swap, bend right, shift right]
        \&
    \pob
        \arrow[r, "m_1"]
        \&
    \pob
    \end{tikzcd}
    \caption{}
    \label{f:coeq-kern}
\end{diagram}
Let \((j_1, j_2)\) be an independent kernel pair of \(m_1\), let \((p_1, p_2)\) be an independent pullback of \((e, j_1)\), and let \((q_1, q_2)\) be an independent pullback of \((j_2, e)\). By \cref{p:pull-pasting}, there is an independent pullback \((r_1, r_2)\) of \((p_2, q_1)\) such that \(k_1 = p_1r_1\) and \(k_2 = q_2r_2\). Then
\begin{align*}
    m_1j_1p_2r_1 &= m_1ep_1r_1 = fk_1 = fk_2 = m_1eq_2r_2 = m_1j_2q_1r_2 = m_1j_2p_2r_1,
    \\\shortintertext{and}
    m_2j_1p_2r_1 &= m_2ep_1r_1 = gk_1 = gk_2 = m_2eq_2r_2 = m_2j_2q_1r_2 = m_2j_2p_2r_1.
\end{align*}
However \((m_1, m_2)\) is jointly monic, so \(j_1p_2r_1 = j_2p_2r_1\). Also \(p_2\) and \(r_1\) are epic by axiom \cref{a:epic}, so actually \(j_1 = j_2\). By \cref{p:monos}, it follows that \(m_1\) is invertible.
\end{proof}

Combining \cref{p:strong-is-regular} and \cref{p:reg-is-strong}~(iv) yields the following alternative characterisation of epi-regular independence categories.

\begin{corollary}
\label{d:epi-regular-alt}
An independence category is epi-regular if and only if 
\begin{enumerate}[label={(E\arabic*')}]
    \item \label{a:pullback-2} every cospan has an independent pullback,
    \item \label{a:fact-2} every span has a (regular epic, jointly monic) factorisation, and
    \item \label{a:coeq-kern-pair} every morphism is the coequaliser of its independent kernel pair.
\end{enumerate}
\end{corollary}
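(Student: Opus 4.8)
The plan is to establish the two implications separately, observing first that axioms \cref{a:pullback} and \cref{a:pullback-2} are verbatim identical, so all of the work concerns the factorisation and epimorphism axioms. Throughout I read the phrase ``kernel pair'' in \cref{a:coeq-kern-pair} as the \emph{independent} kernel pair of \cref{s:ind-pull}, this being the notion native to an independence category. With that reading the entire corollary is pure bookkeeping assembled from \cref{p:strong-is-regular,p:reg-is-strong}, whose proofs contain all the genuine content.

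For the ``only if'' direction, I would assume the independence category is epi-regular, so \cref{a:pullback,a:factorisation,a:epic} hold. Then \cref{p:strong-is-regular} states exactly that every morphism is the coequaliser of its independent kernel pair, which is \cref{a:coeq-kern-pair}; in particular every morphism is regular epic. To obtain \cref{a:fact-2}, I would take an arbitrary span, apply \cref{a:factorisation} to get a (strong epic, jointly monic) factorisation $(e, m_1, m_2)$, and observe that $e$, being a morphism of the category, is regular epic by \cref{p:strong-is-regular}; hence the very same factorisation witnesses \cref{a:fact-2}.

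For the ``if'' direction, I would assume \cref{a:pullback-2,a:fact-2,a:coeq-kern-pair}. By \cref{a:coeq-kern-pair} every morphism is a regular epimorphism, so \cref{p:reg-is-strong} upgrades this to the statement that every morphism is strong epic, giving \cref{a:epic}. For \cref{a:factorisation}, a (regular epic, jointly monic) factorisation provided by \cref{a:fact-2} is at once a (strong epic, jointly monic) factorisation, again because regular epimorphisms are strong by \cref{p:reg-is-strong}.

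The only point requiring care---rather than a genuine obstacle---is the interchangeability of the two classes of epimorphism appearing in the factorisation axioms. This rests on the inclusion of regular epimorphisms among strong epimorphisms (\cref{p:reg-is-strong}) together with its partial converse (\cref{p:strong-is-regular}), namely that in an epi-regular independence category every morphism, and hence every strong epimorphism, is already regular epic. No independence-category reasoning beyond these two propositions is needed.
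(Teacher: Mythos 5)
Your proposal is correct and follows exactly the route the paper intends: the paper states the corollary as an immediate consequence of combining \cref{p:strong-is-regular} and \cref{p:reg-is-strong}, and your write-up simply spells out that bookkeeping (including the correct reading of ``kernel pair'' as the independent kernel pair). Nothing differs in substance from the paper's argument.
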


\section{The equivalence}
\label{s:equivalence}

We are now ready to show that \textit{epi-regular independence category} and \textit{dilatory dagger category} are equivalent concepts. First, in \cref{s:one-equivalence}, we show that the categories \(\RegInd\) and \(\DilDag\) are equivalent. We upgrade this to a strict 2-equivalence of 2-categories in \cref{s:two-equivalence}.

\subsection{Equivalence of categories}\label{s:one-equivalence}

The goal of this section is to prove that the functor $\Coisom \colon \DilDag \to \RegInd$ is an equivalence of categories. To do this, we will explicitly construct its adjoint pseudo-inverse.

\begin{proposition}
\label{p:rel-exists}
If \(\C\) is an epi-regular independence category, then the following data defines a dagger category \(\Rel(\C)\).
\begin{itemize}
    \item The objects are the objects of \(\C\).
    \item The morphisms \(X \to Y\) are the \emph{relations} in \(\C\) from \(X\) to \(Y\), that is, the isomorphism classes of jointly monic spans in \(\C\) from \(X\) to \(Y\). Write \([m_1, m_2]\) for the relation represented by the jointly monic span \((m_1, m_2)\). 
    \item The identity morphism on an object \(X\) is \([1_X, 1_X]\).
    \item Composition is defined by \([s_1, s_2][r_1, r_2] = [m_1, m_2]\), where \((p_1, p_2)\) is an independent pullback of \((r_2, s_1)\), and \((e, m_1, m_2)\) is a (strong epic, jointly monic) factorisation of \((r_1p_1, s_2p_2)\), as depicted in \cref{f:relation-composition}.
    \begin{diagram}
    \centering
    \begin{minipage}[b]{0.5\textwidth}
        \centering
        \begin{tikzcd}[cramped, row sep={between origins,1.7em}, column sep={between origins,4em}, labels={inner sep=0.3ex}]
            \&
            \&
            \pob
                \arrow[ddl, "p_1" swap]
                \arrow[ddr, "p_2"]
                \arrow[ddd, "e"{inner sep=0.5ex,pos=0.6}, shorten >=-1ex]
            \&
            \&
        \\
            \&
            \&
            \&
            \&
        \\
            \&
            \pob
                \arrow[ddl, "r_1" swap]
                \arrow[ddr, "r_2"{pos=0.3}]
            \&
            \&
            \pob
                \arrow[ddl, "s_1"{swap, pos=0.3}]
                \arrow[ddr, "s_2"]
            \&
        \\[-0.2em]
            \&
            \&
            \pob
            \arrow[dll, "m_1"{pos=0.6}, shift left, crossing over]
            \&
            \&
        \\[0.2em]
            \pob
            \&
            \&
            \pob
            \&
            \&
            \pob
            \arrow[from=ull, "m_2"{swap, pos=0.6}, shift right, crossing over]
        \end{tikzcd}
        \caption{}
        \label{f:relation-composition}
    \end{minipage}%
    \begin{minipage}[b]{0.5\textwidth}
        \centering
        \begin{tikzcd}[cramped, row sep={between origins,1.7em}, column sep={between origins,4em}, labels={inner sep=0.3ex}]
            \&
            \&
            \pob
                \arrow[ddl, "q_1" swap, shorten >=-1ex]
                \arrow[ddr, "q_2", shorten >=-1ex]
                \arrow[d, "w"{inner sep=0.5ex, pos=0.7}, shorten >=-1ex]
            \&
            \&
        \\[0.8em]
            \&
            \&
            \pob
                \arrow[ddl, "p_1"]
                \arrow[ddr, "p_2" swap]
                \arrow[ddd, "e"{inner sep=0.5ex,pos=0.7}, shorten >=-1ex]
            \&
            \&
        \\[-0.8em]
            \&
            \pob
                \arrow[d, "u"{inner sep=0.5ex, swap}]
            \&
            \&
            \pob
                \arrow[d, "v"{inner sep=0.5ex}]
            \&
        \\[0.8em]
            \&
            \pob
                \arrow[ddl, "r_1" swap]
                \arrow[ddr, "r_2"{pos=0.3}]
            \&
            \&
            \pob
                \arrow[ddl, "s_1"{swap, pos=0.3}]
                \arrow[ddr, "s_2"]
            \&
        \\[-0.2em]
            \&
            \&
            \pob
            \arrow[dll, "m_1"{pos=0.6}, shift left, crossing over]
            \&
            \&
        \\[0.2em]
            \pob
            \&
            \&
            \pob
            \&
            \&
            \pob
            \arrow[from=ull, "m_2"{swap, pos=0.6}, shift right, crossing over]
        \end{tikzcd}
        \caption{}
        \label{f:composition-is-defined}
    \end{minipage}
    \end{diagram}
    \item The dagger is defined by \([r_1, r_2]^\dagger = [r_2, r_1]\).
\end{itemize}
Also, if \(F \colon \C \to \C'\) is an epi-regular independence functor, then the following data defines a dagger functor \(\Rel(F) \colon \Rel(\C) \to \Rel(\C')\).
	\begin{itemize}
		\item The object map is the object map of \(F\).
		\item The morphism map sends \([r_1, r_2]\) to \([Fr_1, Fr_2]\).
	\end{itemize} 
\end{proposition}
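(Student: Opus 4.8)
The plan is to verify, in order, that composition is well defined, that the unit laws hold, that composition is associative, that the dagger laws hold, and finally that \(\Rel(F)\) is a well-defined dagger functor. The whole construction parallels the classical construction of the category of relations in a regular category, with pullbacks replaced by independent pullbacks and the pullback lemma replaced by the independent-pullback lemma (\cref{p:pull-pasting}); accordingly I expect associativity to be the only substantial step.

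For well-definedness of composition, the independent pullback exists by axiom \cref{a:pullback} and the factorisation by axiom \cref{a:factorisation}. Independence of the choices is routine: independent pullbacks are unique up to a unique isomorphism by their universal property (and are jointly monic by \cref{p:ind-pull-is-joint-monic}), (strong epic, jointly monic) factorisations are unique up to isomorphism, and replacing \((r_1, r_2)\) or \((s_1, s_2)\) by an isomorphic span merely transports these isomorphisms. The key auxiliary observation, depicted in \cref{f:composition-is-defined}, is that the composite may be computed from \emph{any} span \((q_1, q_2)\) that forms an independent square with the cospan \((r_2, s_1)\): the induced comparison \(w\) to the independent pullback \((p_1, p_2)\) is strong epic by axiom \cref{a:epic}, so \(ew\) is strong epic by \cref{p:strong-epic-props}, and hence \((ew, m_1, m_2)\) is a factorisation of \((r_1 q_1, s_2 q_2)\), giving the same relation \([m_1, m_2]\). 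I would record this flexibility as the workhorse for the remaining identities. The unit laws then follow immediately: to compute \([r_1, r_2] \circ [1_X, 1_X]\) the relevant cospan is \((1_X, r_1)\), whose independent pullback is \((r_1, 1)\) by \cref{p:ind-pullback-identity} together with the symmetry axiom \cref{a:independence:4}, so the span to factorise is already the jointly monic span \((r_1, r_2)\); the other unit law is symmetric.

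The main obstacle is associativity. Given relations \([r_1, r_2]\), \([s_1, s_2]\), \([t_1, t_2]\), I would build a single common refinement: form the independent pullbacks of \((r_2, s_1)\) and of \((s_2, t_1)\), and then, using the independent-pullback lemma \cref{p:pull-pasting} to paste, assemble a span that is simultaneously an independent pullback over \(Y\) and over \(Z\). The flexible computation from the previous paragraph then lets me compute both \(([t_1, t_2][s_1, s_2])[r_1, r_2]\) and \([t_1, t_2]([s_1, s_2][r_1, r_2])\) from this one span by a single final (strong epic, jointly monic) factorisation, so the two composites coincide. The delicate point is that each inner composite inserts a strong epic factorisation before the next independent pullback is taken; to show this does not affect the outcome I would use that every morphism is strong epic (axiom \cref{a:epic}), that strong epimorphisms are absorbed into factorisations and are closed under composition (\cref{p:strong-epic-props}), and that jointly monic spans forming independent squares are exactly independent pullbacks (\cref{p:ind-pull-is-joint-monic,p:joint-monic-is-ind-pull}), so that the factorisation and independent-pullback steps may be commuted past one another.

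Finally, the dagger laws are straightforward: \([1,1]^\dagger = [1,1]\) and \([r_1, r_2]^{\dagger\dagger} = [r_1, r_2]\) are immediate, while \(([s_1, s_2][r_1, r_2])^\dagger = [r_1, r_2]^\dagger [s_1, s_2]^\dagger\) follows because the symmetry axiom \cref{a:independence:4} turns an independent pullback of \((r_2, s_1)\) into one of \((s_1, r_2)\) and a (strong epic, jointly monic) factorisation of \((r_1 p_1, s_2 p_2)\) into one of the swapped span \((s_2 p_2, r_1 p_1)\), so both sides equal \([m_2, m_1]\). For \(\Rel(F)\), the morphism map is well defined because \(F\) preserves jointly monic spans and isomorphisms; it preserves identities trivially and composition because \(F\) preserves independent pullbacks by hypothesis and preserves (strong epic, jointly monic) factorisations—the jointly monic part by hypothesis and the strong epic part automatically, since by axiom \cref{a:epic} every morphism in either category is strong epic—and it preserves daggers since the dagger merely swaps the legs of a span.
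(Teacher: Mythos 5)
Your proposal is correct and follows essentially the same route as the paper's proof: composition via independent pullback followed by (strong epic, jointly monic) factorisation, well-definedness via comparison morphisms that are strong epic by axiom \cref{a:epic} and closure of strong epimorphisms under composition, units via \cref{p:ind-pullback-identity}, associativity via a common refinement assembled from pasted independent pullbacks of \((r_2,s_1)\), \((s_2,t_1)\) and \((u_2,v_1)\), the dagger by swapping legs, and the functor claims via preservation of independent pullbacks and jointly monic spans. The only divergence is in how the one delicate associativity step is discharged: the paper shows by an explicit independence-axiom pasting that \((ew_1, v_2w_2)\) forms an independent square with \((m_2, t_1)\) and then invokes universality, whereas you propose to commute the factorisation past the pullback using \cref{p:pull-pasting} and strong-epi absorption—which also works, e.g.\ by taking an independent pullback of \((e,p_1)\), pasting it onto that of \((m_2,t_1)\), and comparing the result with \((w_1, v_2w_2)\) via the unique isomorphism between independent pullbacks of \((m_2e, t_1)\).
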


\begin{proof}
First, consider an epi-regular independence category \(\C\).

For composition in \(\Rel(\C)\) to be well-defined, it should be independent of choices of relation representatives, independent pullbacks, and span factorisations. Let \((r_1, r_2)\), \((s_1, s_2)\), \((p_1, p_2)\) and \((e, m_1, m_2)\) be defined as in the proposition statement. Also let \(u\) and \(v\) be isomorphisms into the apices of the spans \((r_1, r_2)\) and \((s_1, s_2)\) respectively, and let \((q_1, q_2)\) be an independent pullback of \((r_2u, s_1 v)\), as depicted in \cref{f:composition-is-defined}. Universality of independent pullbacks yields a unique morphism \(w\) such that \(p_1w = uq_1\) and \(p_2w = vq_2\). Observe that \((ew, m_1, m_2)\) is a (strong epic, jointly monic) factorisation of \((r_1uq_1, s_2vq_2)\). Because (strong epic, jointly monic) factorisations are unique up to isomorphism, composition in $\Rel(\C)$ is indeed well-defined.

Let us now check that composition in \(\Rel(\C)\) is associative. Let \([r_1, r_2]\), \([s_1, s_2]\) and \([t_1, t_2]\) be composable relations, as depicted in \cref{f:rel-assoc}.
\begin{diagram}
    \begin{tikzcd}[cramped, row sep={between origins,1.7em}, column sep={between origins,4em}, labels={inner sep=0.3ex}]
        \&
        \&
        \&
        \pob
            \arrow[ddl, "w_1" swap]
            \arrow[ddr, "w_2"{pos=0.4}]
            \arrow[ddd, "d"{inner sep=0.5ex, swap}, shorten >=-0.5ex]
        \&
        \&
        \&
    \\
        \&
        \&
        \&
        \&
        \&
        \&
    \\
        \&
        \&
        \pob
            \arrow[ddl, "u_1" swap]
            \arrow[ddr, "u_2"{pos=0.4}]
            \arrow[ddd, "e"{inner sep=0.5ex, swap}, shorten >=-0.5ex]
        \&
        \&
        \pob
            \arrow[ddl, "v_1"{pos=0.3, swap}]
            \arrow[ddr, "v_2"]
        \&
        \&
    \\[-0.5em]
        \&
        \&
        \&
            \pob
            \arrow[ddl, "p_1"{swap, pos=0.6}, crossing over, shift left]
            \arrow[drr, "p_2" swap, crossing over, shift right, shorten >=0.5ex]
        \&
        \&
        \&
    \\[0.5em]
        \&
        \pob
            \arrow[ddl, "r_1" swap]
            \arrow[ddr, "r_2"{pos=0.3}]
        \&
        \&
        \pob
            \arrow[ddl, "s_1"{pos=0.2}]
            \arrow[ddr, "s_2"]
        \&
        \&
        \pob
            \arrow[ddl, "t_1"{swap}, shorten <=0.5ex]
            \arrow[ddr, "t_2"]
        \&
    \\[-0.5em]
        \&
        \&
        \pob
        \arrow[dll, "m_1"{pos=0.6}, shift left, crossing over]
        \&
        \&
        \&
        \&
    \\[0.5em]
        \pob
        \&
        \&
        \pob
        \&
        \&
        \pob
        \arrow[from=ull, "m_2"{swap, pos=0.6}, shift right, crossing over]
        \&
        \&
        \pob
    \end{tikzcd}
    \caption{}
    \label{f:rel-assoc}
\end{diagram}
Let \((u_1, u_2)\) be an independent pullback of \((r_2, s_1)\), let \((v_1, v_2)\) be an independent pullback of \((s_2, t_1)\), and let \((w_1, w_2)\) be an independent pullback of \((u_2, v_1)\); these are also depicted in \cref{f:rel-assoc}. We will show that the composite relation \([t_1, t_2] ([s_1, s_2][r_1, r_2])\) is represented by the jointly monic part of a (strong epic, jointly monic) factorisation of \((r_1u_1w_1, t_2v_2w_2)\). By symmetry, so is the composite relation \(([t_1, t_2][s_1, s_2])[r_1, r_2]\). The result then follows by uniqueness of (strong epic, jointly monic) factorisations.

Let \((e, m_1, m_2)\) be a (strong epic, jointly monic) factorisation of \((r_1u_1, s_2u_2)\), and let \((p_1, p_2)\) be an independent pullback of \((m_2, t_1)\), as depicted in \cref{f:rel-assoc}. Also let \((c, n_1, n_2)\) be a (strong epic, jointly monic) factorisation of \((m_1p_1, t_2p_2)\). Then
\[
    [t_1, t_2] ([s_1, s_2][r_1, r_2]) = [t_1, t_2][m_1, m_2] = [n_1, n_2].
\]
Now, the pasting
\[
    \begin{tikzcd}[cramped, sep=large]
        \pob
            \arrow[r, "w_1"]
            \arrow[d, "w_1" swap]
            \arrow[dr, independent]
            \&
        \pob
            \arrow[r, "v_2"]
            \arrow[d, "v_1"]
            \arrow[dr, independent]
            \&
        \pob
            \arrow[d, "t_1"]
        \\
        \pob
            \arrow[r, "u_2"]
            \arrow[d, "1" swap]
            \arrow[drr, independent]
            \&
        \pob
            \arrow[r, "s_2"]
            \&
        \pob
            \arrow[d, "1"]
        \\
        \pob
            \arrow[r, "e"]
            \arrow[d, "e" swap]
            \arrow[dr, independent]
            \&
        \pob
            \arrow[r, "m_2"]
            \arrow[d, "1"]
            \arrow[dr, independent]
            \&
        \pob
            \arrow[d, "1"]
        \\
        \pob
            \arrow[r, "1" swap]
            \&
        \pob
            \arrow[r, "m_2" swap]
            \&
        \pob
    \end{tikzcd}
\]
is independent, so universality of independent pullbacks yields a unique morphism \(d\) (depicted in \cref{f:rel-assoc}) such that \(p_1d = ew_1\) and \(p_2d = v_2w_2\). Hence \((cd, n_1, n_2)\) is a (strong epic, jointly monic) factorisation of \((r_1u_1w_1, t_2v_2w_2)\), as required.

Unitality of composition in \(\Rel(\C)\) follows from \cref{p:ind-pullback-identity}. The fact that the dagger of \(\Rel(\C)\) is well-defined, functorial and involutive is trivial.

Next, consider an epi-regular independence functor \(F \colon \C \to \C'\). For \(\Rel(F)\) to be well-defined, \((r_1, r_2)\) being jointly monic must imply \((Fr_1, Fr_2)\) being jointly monic; also the isomorphism class \([Fr_1, Fr_2]\) must be independent of the chosen representative \((r_1, r_2)\) of the isomorphism class \([r_1, r_2]\). The former is true because \(F\) preserves (strong epic, jointly monic) factorisations of spans. The latter is true because \(F\) is functorial, and so preserves isomorphisms. Clearly \(\Rel(F)\) preserves identity morphisms. It preserves composition because it preserves both independent pullbacks and (strong epic, jointly monic) factorisations of spans.
\end{proof}

To show that \(\Rel(\C)\) is dilatory, we need a better understanding of its coisometries.

\begin{lemma}
    \label{p:coisom-rel}
Let \(\C\) be an epi-regular independence category.
\begin{enumerate}
    \item A morphism in \(\Rel(\C)\) is coisometric if and only if it is of the form \([1, f]\).
    \item For all composable morphisms \(f\) and \(g\) in \(\C\),
    \[[1, gf] = [1, g][1, f].\]
    \item For all parallel morphisms \(f\) and \(g\) in \(\C\),
    \[[1, f] = [1, g] \qquad\text{implies}\qquad f = g.\]
    \item For all morphisms \(f\), \(g\), \(s\) and \(t\) in \(\C\),
    \[
        \begin{tikzcd}[cramped, sep=large]
            \pob
                \arrow[r, "g"]
                \arrow[d, "f" swap]
                \arrow[dr, independent]
                \&
            \pob
                \arrow[d, "v"]
            \\
            \pob
                \arrow[r, "u" swap]
                \&
            \pob
        \end{tikzcd}
        \qquad\text{if and only if}\qquad
        \begin{tikzcd}[cramped, sep=large]
            \pob
                \arrow[r, "{[1, g]}"]
                \arrow[d, "{[1, f]}" swap]
                \arrow[dr, independent]
                \&
            \pob
                \arrow[d, "{[1, v]}"]
            \\
            \pob
                \arrow[r, "{[1, u]}" swap]
                \&
            \pob
        \end{tikzcd}
        .
    \]
\end{enumerate}
\end{lemma}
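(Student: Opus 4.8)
The plan is to prove the four parts in the order (2), (3), (1), (4), since the later parts reuse the earlier ones. Throughout I would lean on axiom \cref{a:epic}---\emph{every} morphism of \(\C\) is strong epic---so that whenever a morphism factors through a jointly monic span there is automatically a strong-epic comparison. For part (2) I would compute \([1,g][1,f]\) straight from the definition of composition in \(\Rel(\C)\): the required independent pullback is of the cospan \((f,1)\), which by \cref{p:ind-pullback-identity} is \((1,f)\), and feeding this into the factorisation step produces the span \((1,gf)\); this is already jointly monic (its left leg is an identity), so the factorisation is trivial and \([1,g][1,f]=[1,gf]\). Part (3) is immediate: an isomorphism of spans \((1,f)\cong(1,g)\) must be the identity on the apex in order to commute with the two identity legs, and this forces \(f=g\).

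For part (1), the \emph{forward} direction is short. Since \([1,f]^\dagger=[f,1]\), the independent pullback occurring in \([1,f][f,1]\) is the identity kernel pair \((1,1)\), and the factorisation step then asks for a (strong epic, jointly monic) factorisation of \((f,f)\). Taking the strong epimorphism \(f\) (valid by \cref{a:epic}) together with the jointly monic span \((1,1)\) shows \([1,f][f,1]=[1,1]=1\), so \([1,f]\) is coisometric.

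The \emph{converse} of part (1) is where I expect the real work. Given a coisometric \([m_1,m_2]\), I would unfold \([m_1,m_2][m_2,m_1]=1\): the independent pullback involved is an independent kernel pair \((p_1,p_2)\) of \(m_1\), and the factorisation of \((m_2p_1,m_2p_2)\) must represent the identity relation, i.e.\ be isomorphic to \((1,1)\). Uniqueness of factorisations then forces \(m_2p_1=m_2p_2\); combining this with \(m_1p_1=m_1p_2\) (independent squares commute, by axiom \cref{a:independence:1}) and joint monicity of \((m_1,m_2)\) yields \(p_1=p_2\). Now \((p_1,p_2)\) is jointly monic by \cref{p:ind-pull-is-joint-monic}, so \(p_1=p_2\) is monic, while \cref{a:epic} makes it strong epic; hence it is invertible by \cref{p:strong-epic-props}, so \((p_1,p_2)\cong(1,1)\). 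Thus \((1,1)\) is an independent kernel pair of \(m_1\), whence \(m_1\) is invertible by \cref{p:monos}, and \((m_1,m_2)\cong(1,m_2{m_1}^{-1})\), giving \([m_1,m_2]=[1,m_2{m_1}^{-1}]\). The delicate point is extracting \(m_2p_1=m_2p_2\) cleanly from an equation of isomorphism classes.

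Finally, for part (4) I would invoke \cref{p:dildag-indep}: because the four morphisms are coisometries by part (1), the square in \(\Rel(\C)\) is independent exactly when \([1,u][1,f]=[1,v][1,g]\) and \([1,g][1,f]^\dagger=[1,v]^\dagger[1,u]\). By parts (2) and (3) the first equation is equivalent to \(uf=vg\). For the second I would compute both sides from the composition recipe: \([1,g][f,1]\) is the jointly monic part of a factorisation of \((f,g)\), while \([v,1][1,u]\) is an independent pullback \((p_1,p_2)\) of \((u,v)\), which is already jointly monic by \cref{p:ind-pull-is-joint-monic} so its factorisation is trivial. Hence the second equation says the factorisation of \((f,g)\) equals \([p_1,p_2]\). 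If the square is independent in \(\C\), the universal property of the independent pullback supplies a comparison \(e\) with \(p_1e=f\) and \(p_2e=g\), strong epic by \cref{a:epic}, exhibiting \((e,p_1,p_2)\) as the factorisation of \((f,g)\); conversely, if that factorisation is \([p_1,p_2]\), then \((f,g)\) factors through the independent pullback and \cref{p:rev-descent} propagates independence back to the square \((f,g,u,v)\). This establishes the stated equivalence.
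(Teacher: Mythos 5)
Your proposal is correct and follows essentially the same route as the paper's proof: the same composition computations for parts (2), (3), and both directions of (1) (unfolding \([m_1,m_2][m_2,m_1]=1\) via an independent kernel pair of \(m_1\), then using joint monicity), and the same reduction of (4) via \cref{p:dildag-indep}, \cref{p:ind-pull-is-joint-monic}, axiom \cref{a:epic}, and \cref{p:rev-descent}. The only cosmetic deviation is at the end of the converse of (1), where the paper concludes that \(m_1\) is invertible directly from \(p_1=p_2\) using \cref{p:monos}~(ii), whereas you first argue \((p_1,p_2)\cong(1,1)\) and invoke \cref{p:monos}~(iii); both are valid.
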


\begin{proof}
For the \textit{if} direction of (i), consider a morphism \(f\) in \(\C\). First note that the span \((1, f)\) is jointly monic, so \([1, f]\) is indeed a relation. Now refer to \cref{f:coisom}.
\begin{diagram}
\centering
\begin{minipage}[b]{0.5\textwidth}
    \centering
    \begin{tikzcd}[cramped, row sep={between origins,1.7em}, column sep={between origins,4em}, labels={inner sep=0.3ex}]
        \&
        \&
        \pob
            \arrow[ddl, "1" swap]
            \arrow[ddr, "1"]
            \arrow[ddd, "f"{inner sep=0.5ex,pos=0.6}, shorten >=-1ex]
        \&
        \&
    \\
        \&
        \&
        \&
        \&
    \\
        \&
        \pob
            \arrow[ddl, "f" swap]
            \arrow[ddr, "1"{pos=0.3}]
        \&
        \&
        \pob
            \arrow[ddl, "1"{swap, pos=0.3}]
            \arrow[ddr, "f"]
        \&
    \\[-0.2em]
        \&
        \&
        \pob
        \arrow[dll, "1"{pos=0.6}, shift left, crossing over]
        \&
        \&
    \\[0.2em]
        \pob
        \&
        \&
        \pob
        \&
        \&
        \pob
        \arrow[from=ull, "1"{swap, pos=0.6}, shift right, crossing over]
    \end{tikzcd}
    \caption{}
    \label{f:coisom}
\end{minipage}%
\begin{minipage}[b]{0.5\textwidth}
    \centering
    \begin{tikzcd}[cramped, row sep={between origins,1.7em}, column sep={between origins,4em}, labels={inner sep=0.3ex}]
        \&
        \&
        \pob
            \arrow[ddl, "k_1" swap]
            \arrow[ddr, "k_2"]
            \arrow[ddd, "e"{inner sep=0.5ex,pos=0.6}, shorten >=-1ex]
        \&
        \&
    \\
        \&
        \&
        \&
        \&
    \\
        \&
        \pob
            \arrow[ddl, "r_2" swap]
            \arrow[ddr, "r_1"{pos=0.3}]
        \&
        \&
        \pob
            \arrow[ddl, "r_1"{swap, pos=0.3}]
            \arrow[ddr, "r_2"]
        \&
    \\[-0.2em]
        \&
        \&
        \pob
        \arrow[dll, "1"{pos=0.6}, shift left, crossing over]
        \&
        \&
    \\[0.2em]
        \pob
        \&
        \&
        \pob
        \&
        \&
        \pob
        \arrow[from=ull, "1"{swap, pos=0.6}, shift right, crossing over]
    \end{tikzcd}
    \caption{}
    \label{f:coisom-rev}
\end{minipage}
\end{diagram}
The span \((1, 1)\) is an independent pullback of the cospan \((1, 1)\) by \cref{p:ind-pullback-identity}. Also \((f, 1, 1)\) is a (strong epic, jointly monic) factorisation of \((f1, f1)\). Hence \([1, f][1, f]^\dagger = [1, 1]\).

For the \textit{only if} direction of (i), consider a coisometry \([r_1, r_2]\) in \(\Rel(\C)\). Let \((k_1, k_2)\) be an independent kernel pair of \(r_1\). As \([r_1, r_2]\) is coisometric, the span \((r_2k_1, r_2k_2)\) factors through the span \((1, 1)\) via a strong epimorphism~\(e\), 
as depicted in \cref{f:coisom-rev}. Hence \(r_2k_1 = e = r_2k_2\). Thus \(r_1k_1 = r_1k_2\) and since \((r_1, r_2)\) is jointly monic, we get that \(k_1 = k_2\). By \cref{p:monos}, it follows that \(r_1\) is invertible. Hence \([r_1, r_2] = [1, r_2 {r_1}^{-1}]\).

For (ii), refer to \cref{f:rel-coisom-comp}.
\begin{diagram}
    \begin{tikzcd}[cramped, row sep={between origins,1.7em}, column sep={between origins,4em}, labels={inner sep=0.3ex}]
        \&
        \&
        \pob
            \arrow[ddl, "1" swap]
            \arrow[ddr, "f"]
            \arrow[ddd, "1"{inner sep=0.5ex,pos=0.6}, shorten >=-1ex]
        \&
        \&
    \\
        \&
        \&
        \&
        \&
    \\
        \&
        \pob
            \arrow[ddl, "1" swap]
            \arrow[ddr, "f"{pos=0.3}]
        \&
        \&
        \pob
            \arrow[ddl, "1"{swap, pos=0.3}]
            \arrow[ddr, "g"]
        \&
    \\[-0.2em]
        \&
        \&
        \pob
        \arrow[dll, "1"{pos=0.6}, shift left, crossing over]
        \&
        \&
    \\[0.2em]
        \pob
        \&
        \&
        \pob
        \&
        \&
        \pob
        \arrow[from=ull, "gf"{swap, pos=0.6}, shift right, crossing over]
    \end{tikzcd}
    \caption{}
    \label{f:rel-coisom-comp}
\end{diagram}
The span \((1, f)\) is an independent pullback of the cospan \((f, 1)\) by \cref{p:ind-pullback-identity}. Also \((1, 1, gf)\) is a (strong epic, jointly monic) factorisation of the span \((11, gf)\). Hence \([1, gf] = [1, g][1, f]\), as required.

For (iii), if \([1, f] = [1, g]\) then there is an isomorphism \(u\) such that \(1 = 1u\) and \(f = gu\). This means that \(f = gu = g1 = g\), as required.

For (iv), first note that \([1, v][1, g] = [1, u][1, f]\) if and only if \(vg = uf\); this follows from parts (ii) and (iii). Now let \((p_1, p_2)\) be an independent pullback of the cospan \((u, v)\). By \cref{p:ind-pull-is-joint-monic}, the span \((p_1, p_2)\) is jointly monic. Thus \([1, v]^\dagger[1, u] = [p_1, p_2]\). Hence \([1, g][1, f]^\dagger = [1, u]^\dagger[1, v]\) exactly when the span \((f, g)\) factors through the span \((p_1, p_2)\) via a strong epimorphism. By \cref{p:rev-descent} and the definition of independent pullback, this is true if and only if the square \((f, g, u, v)\) is independent.
\end{proof}

\begin{proposition}
\label{p:rel-dilatory}
If \(\C\) is an epi-regular independence category, then the dagger category \(\Rel(\C)\) is dilatory: the dilator of a relation \([r_1, r_2]\) is the span \(([1, r_1], [1, r_2])\). Also, if \(F \colon \C \to \C'\) is an epi-regular independence functor, then the dagger functor \(\Rel(F) \colon \Rel(\C) \to \Rel(\C')\) is dilatory. In fact \(\Rel\) is a functor \(\RegInd \to \DilDag\).
\end{proposition}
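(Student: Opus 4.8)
The plan is to reduce the entire proposition to the single claim that \(([1, r_1], [1, r_2])\) is a dilator of \([r_1, r_2]\), since the functoriality and functor-preservation statements then fall out quickly. Everything rests on the characterisation of coisometries in \cref{p:coisom-rel}, so I would keep that lemma at hand throughout and never unfold the composition of relations by hand beyond what it already packages.

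First I would check that \(([1, r_1], [1, r_2])\) is a \emph{dilation}. The legs are coisometries by \cref{p:coisom-rel}~(i), and to see that \([1, r_2][1, r_1]^\dagger = [r_1, r_2]\) I would just run the composition formula on \([1, r_2][r_1, 1]\): the relevant independent pullback is that of the cospan \((1, 1)\), which is \((1, 1)\) by \cref{p:ind-pullback-identity}, and since \((r_1, r_2)\) is jointly monic its factorisation is trivial, leaving \([r_1, r_2]\). Next, for universality, I would use \cref{p:coisom-rel}~(i) to write an arbitrary dilation as \(([1, f], [1, g])\) for a span \((f, g)\) in \(\C\). Running the same composition computation shows that \([1, g][1, f]^\dagger\) is the jointly monic part \([m_1, m_2]\) of a (strong epic, jointly monic) factorisation of \((f, g)\); hence the dilation condition says precisely that this factorisation has jointly monic part \((r_1, r_2)\), i.e.\ \(f = r_1 e\) and \(g = r_2 e\) for a strong epimorphism \(e\) (absorbing the comparison isomorphism using \cref{p:strong-epic-props}). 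The mediating coisometry is then \([1, e]\): by parts~(ii) and~(iii) of \cref{p:coisom-rel} the commutativity conditions \([1, f] = [1, r_1][1, e]\) and \([1, g] = [1, r_2][1, e]\) are equivalent to \(f = r_1 e\) and \(g = r_2 e\), and uniqueness follows from joint monicity of \((r_1, r_2)\) together with part~(iii).

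For an epi-regular independence functor \(F\), I would observe that \(\Rel(F)\) is already a dagger functor by \cref{p:rel-exists}, so it only remains to check that it preserves dilators. Since \(F\) preserves jointly monic spans, \((Fr_1, Fr_2)\) is jointly monic and \([Fr_1, Fr_2]\) is a relation whose dilator, by the first part applied to \(\C'\), is \(([1, Fr_1], [1, Fr_2])\); and because \(F\) preserves identities, \(\Rel(F)\) sends \(([1, r_1], [1, r_2])\) to exactly this span, which is the dilator of \(\Rel(F)[r_1, r_2]\). Functoriality of \(\Rel\) into \(\DilDag\) is then immediate from the explicit object and morphism maps. I expect the main obstacle to be the universality step: correctly matching the abstract dilation condition with a concrete span factorisation and keeping track of the up-to-isomorphism nature of factorisations; once that identification is made, the mediating map is forced to be the strong epic part and the remainder is bookkeeping via \cref{p:coisom-rel}.
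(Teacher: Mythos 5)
Your proposal is correct and follows essentially the same route as the paper's proof: both verify the dilation property by running the composition formula with the independent pullback of \((1,1)\) given by \cref{p:ind-pullback-identity}, both use \cref{p:coisom-rel}~(i) to put an arbitrary dilation in the form \(([1,f],[1,g])\), both identify the mediating coisometry as \([1,e]\) where \(e\) is the strong epic part of the factorisation of \((f,g)\), and both get uniqueness from joint monicity of \((r_1,r_2)\) together with parts (ii) and (iii). Your treatment of the functor claims, which the paper dismisses as "easy to check," is the natural argument and is fine.
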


\begin{proof}
First, we show that \(([1, r_1], [1, r_2])\) is a dilation of \([r_1, r_2]\). By \cref{p:coisom-rel}~(i), the relations \([1, r_1]\) and \([1, r_2]\) are coisometric. As shown in \cref{f:dilator-1},
\begin{diagram}
\centering
\begin{minipage}[b]{0.5\textwidth}
    \centering
    \begin{tikzcd}[cramped, row sep={between origins,1.7em}, column sep={between origins,4em}, labels={inner sep=0.3ex}]
        \&
        \&
        \pob
            \arrow[ddl, "1" swap]
            \arrow[ddr, "1"]
            \arrow[ddd, "1"{inner sep=0.5ex,pos=0.6}, shorten >=-1ex]
        \&
        \&
    \\
        \&
        \&
        \&
        \&
    \\
        \&
        \pob
            \arrow[ddl, "r_1" swap]
            \arrow[ddr, "1"{pos=0.3}]
        \&
        \&
        \pob
            \arrow[ddl, "1"{swap, pos=0.3}]
            \arrow[ddr, "r_2"]
        \&
    \\[-0.2em]
        \&
        \&
        \pob
        \arrow[dll, "r_1"{pos=0.6}, shift left, crossing over]
        \&
        \&
    \\[0.2em]
        \pob
        \&
        \&
        \pob
        \&
        \&
        \pob
        \arrow[from=ull, "r_2"{swap, pos=0.6}, shift right, crossing over]
    \end{tikzcd}
    \caption{}
    \label{f:dilator-1}
\end{minipage}%
\begin{minipage}[b]{0.5\textwidth}
    \centering
    \begin{tikzcd}[cramped, row sep={between origins,1.7em}, column sep={between origins,4em}, labels={inner sep=0.3ex}]
        \&
        \&
        \pob
            \arrow[ddl, "1" swap]
            \arrow[ddr, "1"]
            \arrow[ddd, "e"{inner sep=0.5ex,pos=0.6}, shorten >=-1ex]
        \&
        \&
    \\
        \&
        \&
        \&
        \&
    \\
        \&
        \pob
            \arrow[ddl, "s_1" swap]
            \arrow[ddr, "1"{pos=0.3}]
        \&
        \&
        \pob
            \arrow[ddl, "1"{swap, pos=0.3}]
            \arrow[ddr, "s_2"]
        \&
    \\[-0.2em]
        \&
        \&
        \pob
        \arrow[dll, "r_1"{pos=0.6}, shift left, crossing over]
        \&
        \&
    \\[0.2em]
        \pob
        \&
        \&
        \pob
        \&
        \&
        \pob
        \arrow[from=ull, "r_2"{swap, pos=0.6}, shift right, crossing over]
    \end{tikzcd}
    \caption{}
    \label{f:dilator-2}
\end{minipage}
\end{diagram}
the span \((1, 1)\) is an independent pullback of the cospan \((1, 1)\) by \cref{p:ind-pullback-identity}, and \((1, r_1, r_2)\) is a (strong epic, jointly monic) factorisation of \((r_11, r_21)\). Hence \([1, r_2][1, r_1]^\dagger = [r_1, r_2]\).

To see that \(([1, r_1], [1, r_2])\) is terminal, consider another dilation of \([r_1, r_2]\). By \cref{p:coisom-rel}~(i), it is of the form \(([1, s_1], [1, s_2])\). If a comparison morphism exists, then it is of the form \([1, e]\), also by \cref{p:coisom-rel}~(i). Hence \([1, s_1] = [1, r_1][1, e] = [1, r_1e]\) by \cref{p:coisom-rel}~(ii). Thus \(s_1 = r_1 e\) by \cref{p:coisom-rel}~(iii). Similarly \(s_2 = r_2e\). Hence \((e, r_1, r_2)\) is a (strong epic, jointly monic) factorisation of \((s_1, s_2)\). However \((r_1, r_2)\) is jointly monic, so this uniquely determines \(e\). We now prove existence. Refer to \cref{f:dilator-2}. The span \((1, 1)\) is again an independent pullback of the cospan \((1, 1)\). However \([1, s_2][1, s_1]^\dagger = [r_1, r_2]\), so the span \((s_11, s_21)\) factors through \((r_1, r_2)\) via a strong epimorphism \(e\). 
Hence \([1, r_1][1, e] = [1, s_1]\) and \([1, r_2][1, e] = [1, s_2]\) by \cref{p:coisom-rel}~(ii).

The remaining claims are easy to check.
\end{proof}

To show that \(\Rel\) is an adjoint pseudo-inverse of \(\Coisom\), we will construct the unit and counit of the adjunction, and show that they are natural isomorphisms.

\begin{proposition}
\label{p:regularish-is-coisom}
For each epi-regular independence category \(\C\), the map \(f \mapsto [1, f]\) on morphisms defines an isomorphism \(\eta_\C \colon \C \to \Coisom(\Rel(\C))\) in~\(\RegInd\). In fact \(\eta \colon 1_{\RegInd} \Rightarrow \Coisom \circ \Rel\) is a natural isomorphism.
\end{proposition}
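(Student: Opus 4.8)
The plan is to reduce everything to \cref{p:coisom-rel}, which already packages the four facts about morphisms of the form $[1, f]$ that the whole statement rests on. First I would check that $\eta_\C$ is a well-defined functor. On objects it is the identity, since $\Rel(\C)$---and hence $\Coisom(\Rel(\C))$---has the same objects as $\C$. On morphisms, \cref{p:coisom-rel}~(i) guarantees that each $[1, f]$ is coisometric, so $\eta_\C$ really does land in $\Coisom(\Rel(\C))$. Functoriality is then immediate: $\eta_\C(gf) = [1, gf] = [1, g][1, f] = \eta_\C(g)\,\eta_\C(f)$ by \cref{p:coisom-rel}~(ii), and $\eta_\C(1) = [1, 1]$ is the identity relation.

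Next I would establish that $\eta_\C$ is a bijection. It is trivially a bijection on objects. On morphisms, injectivity is exactly \cref{p:coisom-rel}~(iii), and surjectivity is the \textit{only if} direction of \cref{p:coisom-rel}~(i): every coisometry in $\Rel(\C)$, that is, every morphism of $\Coisom(\Rel(\C))$, has the form $[1, f]$. Hence $\eta_\C$ is an isomorphism of the underlying categories.

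To upgrade this to an isomorphism in $\RegInd$, I would invoke the two-directional equivalence \cref{p:coisom-rel}~(iv): a square in $\C$ is independent if and only if its $\eta_\C$-image is independent in $\Coisom(\Rel(\C))$. Thus $\eta_\C$ both preserves and reflects independent squares, so both $\eta_\C$ and its inverse are independence functors, making $\eta_\C$ an isomorphism in $\Ind$. Because an isomorphism of categories that preserves and reflects independence automatically preserves and reflects every universal property phrased in terms of independent squares, as well as joint monicity of spans, $\eta_\C$ in particular preserves independent pullbacks and jointly monic spans in both directions; as $\C$ and $\Coisom(\Rel(\C))$ are both epi-regular, $\eta_\C$ is therefore an isomorphism in $\RegInd$.

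Finally, for naturality I would verify, for each epi-regular independence functor $F \colon \C \to \C'$, that $\eta_{\C'} \circ F = \Coisom(\Rel(F)) \circ \eta_\C$. Both composites act as $F$ on objects, and on a morphism $f$ of $\C$ one computes $\Coisom(\Rel(F))\bigl([1, f]\bigr) = [F1, Ff] = [1, Ff] = \eta_{\C'}(Ff)$, using that $F$ preserves identities together with the definition of $\Rel(F)$ from \cref{p:rel-exists}. I expect no genuine obstacle here: the only substantive inputs are \cref{p:coisom-rel}~(i), which supplies surjectivity and hence the existence of the inverse functor, and the reflection half of \cref{p:coisom-rel}~(iv), which is what forces that inverse to be an independence functor. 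Everything else is bookkeeping, since the heavy lifting has already been done in \cref{p:coisom-rel}.
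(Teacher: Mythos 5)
Your proposal is correct and follows essentially the same route as the paper's own proof: both reduce the entire statement to the four parts of \cref{p:coisom-rel} (well-definedness and functoriality from (i)--(ii), fullness/surjectivity from the \emph{only if} direction of (i), faithfulness from (iii), independence-preservation in both directions from (iv)), and both then invoke the fact that an isomorphism in \(\Ind\) automatically preserves independent pullbacks and jointly monic spans to conclude epi-regularity, finishing with the routine naturality computation.
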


\begin{proof}
With reference to \cref{p:coisom-rel}, \(\eta_\C\) is well-defined by the \textit{if} direction of (i), and is functorial by (ii). It is also full by the \textit{only if} direction of (i), and faithful by (iii). As it is the identity on objects, it thus has an inverse functor. Actually \(\eta_\C\) and \({\eta_\C}^{-1}\) are independence functors by (iv). However every isomorphism in \(\Ind\) preserves independent pullbacks and jointly monic spans, so \(\eta_\C\) and \({\eta_\C}^{-1}\) are in fact epi-regular. It is easy to verify the naturality of \(\eta\).
\end{proof}

\begin{proposition}
	\label{p:dilatory-is-rel}
	For each dilatory dagger category $\D$, the map \([r_1, r_2] \mapsto r_2 {r_1}^\dagger\) on morphisms defines an isomorphism \(\varepsilon_\D \colon \Rel(\Coisom(\D)) \to \D\) in the category~\(\DilDag\). In fact \(\varepsilon \colon \Rel \circ \Coisom \Rightarrow 1_{\DilDag}\) is a natural isomorphism.
\end{proposition}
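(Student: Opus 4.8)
The plan is to verify, in turn, that $\varepsilon_\D$ is a well-defined dagger functor, that it is the identity on objects and a bijection on morphisms, that it respects dilators, and finally that the family $(\varepsilon_\D)_\D$ is natural. On objects $\varepsilon_\D$ is the identity, since $\Rel(\Coisom(\D))$, $\Coisom(\D)$ and $\D$ all share the same objects. For well-definedness on morphisms, I would note that an isomorphism of jointly monic spans in $\Coisom(\D)$ is witnessed by a unitary $u$ (an invertible coisometry, necessarily satisfying $u^{-1}=u^\dagger$), so that replacing $(r_1,r_2)$ by $(r_1u,r_2u)$ gives $r_2u(r_1u)^\dagger = r_2uu^\dagger{r_1}^\dagger = r_2{r_1}^\dagger$. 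The identity relation $[1,1]$ maps to $1$, and $[r_1,r_2]^\dagger=[r_2,r_1]$ maps to $r_1{r_2}^\dagger=(r_2{r_1}^\dagger)^\dagger$, so daggers are preserved.

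The substantial computation is preservation of composition, and this is where I expect to concentrate the argument. Recalling the definition of composition in $\Rel(\Coisom(\D))$, write $[s_1,s_2][r_1,r_2]=[m_1,m_2]$, where $(p_1,p_2)$ is an independent pullback of $(r_2,s_1)$ and $(e,m_1,m_2)$ is a (strong epic, jointly monic) factorisation of $(r_1p_1,s_2p_2)$. Since $e$ is a coisometry, $ee^\dagger=1$, so
\[
    m_2{m_1}^\dagger = m_2ee^\dagger{m_1}^\dagger = (m_2e)(m_1e)^\dagger = (s_2p_2)(r_1p_1)^\dagger = s_2\,(p_2{p_1}^\dagger)\,{r_1}^\dagger.
\]
The key input is that independence of the pullback square, by \cref{e:ind}, yields $p_2{p_1}^\dagger={s_1}^\dagger r_2$. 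Substituting gives $m_2{m_1}^\dagger = s_2{s_1}^\dagger r_2{r_1}^\dagger = \varepsilon_\D([s_1,s_2])\,\varepsilon_\D([r_1,r_2])$, as required.

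Bijectivity on morphisms rests on \cref{p:jointly-monic-is-dilator}, which identifies the jointly monic spans of coisometries $(r_1,r_2)$ with the dilators of $r_2{r_1}^\dagger$. For injectivity, if $r_2{r_1}^\dagger=s_2{s_1}^\dagger$ then $(r_1,r_2)$ and $(s_1,s_2)$ are both dilators of this common morphism, hence isometrically isomorphic, so $[r_1,r_2]=[s_1,s_2]$. For surjectivity, any $r$ in $\D$ has a dilator $(r_1,r_2)$ because $\D$ is dilatory; it is jointly monic by \cref{p:jointly-monic-is-dilator}, so $[r_1,r_2]$ is a morphism of $\Rel(\Coisom(\D))$ with $\varepsilon_\D([r_1,r_2])=r_2{r_1}^\dagger=r$. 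Being the identity on objects and bijective on morphisms, $\varepsilon_\D$ is thus an isomorphism of dagger categories. It is moreover an isomorphism in $\DilDag$: by \cref{p:rel-dilatory} the dilator of $[r_1,r_2]$ is $([1,r_1],[1,r_2])$, and $\varepsilon_\D$ sends this span to $(r_1,r_2)$, which is a dilator of $r_2{r_1}^\dagger$; hence $\varepsilon_\D$ preserves dilators, and the same holds for its inverse.

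Finally, for naturality I would check that, for every dilatory dagger functor $G\colon\D\to\D'$, the two composites $\Rel(\Coisom(\D))\to\D'$ agree. On a morphism $[r_1,r_2]$, going via $\varepsilon_\D$ and then $G$ gives $G(r_2{r_1}^\dagger)=(Gr_2)(Gr_1)^\dagger$, using that $G$ is a dagger functor, while going via $\Rel(\Coisom(G))$ gives $[Gr_1,Gr_2]$ and then $(Gr_2)(Gr_1)^\dagger$; these coincide, and both are the identity $G$ on objects. Apart from the bookkeeping of the independence identity $p_2{p_1}^\dagger={s_1}^\dagger r_2$ in the composition step, every part of the argument is formal, so the composition computation is the only real obstacle.
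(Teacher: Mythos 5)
Your proposal is correct and follows essentially the same route as the paper's proof: well-definedness via the unitary witnessing a span isomorphism, functoriality via the identity \(p_2{p_1}^\dagger = {s_1}^\dagger r_2\) supplied by independence of the pullback square, bijectivity on morphisms via \cref{p:jointly-monic-is-dilator}, and dilatoriness via \cref{p:rel-dilatory}. The only difference is cosmetic: the paper spells out that \({\varepsilon_\D}^{-1}\) preserves dilators (using \cref{p:dilator-coisometry}), where you appeal to the formal fact that the inverse of a dagger isomorphism preserving dilators does too, which is indeed valid.
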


\begin{proof}
    First, we show that \(\varepsilon_\D\) is well-defined. Let \((r_1, r_2)\) and \((s_1, s_2)\) be parallel jointly monic spans in \(\Coisom(\D)\). If \([r_1, r_2] = [s_1, s_2]\), then there is an isomorphism \(u\) in \(\Coisom(\D)\) such that \(s_1 = r_1u\) and \(s_2 = r_2u\). Hence \(r_2{r_1}^\dagger = r_2uu^\dagger {r_1}^\dagger = s_2 {s_1}^\dagger\).

    We now show that \(\varepsilon_\D\) is functorial. Let \([r_1, r_2]\) and \([s_1, s_2]\) be composable relations in \(\Coisom(\D)\), let \((p_1, p_2)\) be an independent pullback of the cospan \((r_2, s_1)\), and let \((e, m_1, m_2)\) be a (strong epic, jointly monic) factorisation of \((r_1p_1, s_2p_2)\), as depicted in \cref{f:composition-is-defined}. Then \([s_1, s_2][r_1, r_2] = [m_1, m_2]\). Recalling \cref{p:dildag-indep},
    \begin{align*}
        \varepsilon_\D([s_1, s_2][r_1, r_2])
        &= \varepsilon_\D([m_1, m_2])
        = m_2{m_1}^\dagger
        = m_2 ee^\dagger {m_1}^\dagger
        \\&= s_2 p_2 {p_1}^\dagger {r_1}^\dagger
        = s_2{s_1}^\dagger r_2 {r_1}^\dagger
        = \varepsilon_\D([s_1, s_2])\,\varepsilon_\D([r_1, r_2]).
    \end{align*}

	Next, we prove that \(\varepsilon_\D\) is faithful. Let \([r_1, r_2]\) and \([s_1, s_2]\) be parallel morphisms in \(\Rel(\Coisom(\D))\). Suppose that \(\varepsilon_\D([r_1, r_2]) = \varepsilon_\D([s_1, s_2])\). By \Cref{p:jointly-monic-is-dilator}, the spans \((r_1,r_2)\) and \((s_1, s_2)\) are both dilators of \(r_2 {r_1}^\dagger = s_2 {s_1}^\dagger\). Hence \([r_1, r_2] = [s_1, s_2]\).

	The functor \(\varepsilon_\D\) is also full. Indeed, each morphism \(f\) in \(\D\) has a dilator \((r_1, r_2)\), and \((r_1, r_2)\) is jointly monic in \(\Coisom(\D)\) by \cref{p:jointly-monic-is-dilator}, so \(f = \varepsilon_\D([r_1, r_2])\).

    Next, we show that \(\varepsilon_\D\) is dilatory. Consider a relation \([r_1, r_2]\) in \(\Coisom(\D)\). By \cref{p:rel-dilatory}, its dilator in \(\Rel(\Coisom(\D))\) is the span \(([1, r_1], [1, r_2])\). The span \((\varepsilon_\D([1, r_1]), \varepsilon_\D([1, r_2])) = (r_1, r_2)\) in \(\Coisom(\D)\) is jointly monic, so, by \cref{p:jointly-monic-is-dilator}, it is the dilator in \(\D\) of the morphism \(r_2{r_1}^\dagger = \varepsilon_\D([r_1, r_2])\).

    Finally, we show that \({\varepsilon_\D}^{-1}\) is dilatory. By everything above, the functor \({\varepsilon_\D}^{-1}\) maps each morphism in \(\D\) to the relation in \(\Coisom(\D)\) represented by its dilator. Let \((r_1, r_2)\) be a dilator of a morphism \(f\) in \(\D\). Then \({\varepsilon_\D}^{-1}(f) = [r_1, r_2]\). Also, \(({\varepsilon_\D}^{-1}(r_1), {\varepsilon_\D}^{-1}(r_2)) = ([1, r_1], [1, r_2])\), by \cref{p:dilator-coisometry}. However \(([1, r_1], [1, r_2])\) is a dilator of \([r_1, r_2]\) by \cref{p:rel-dilatory}.

    It is easy to verify the naturality of \(\varepsilon\).
\end{proof}

Putting everything together, we obtain the following theorem.

\begin{theorem}\label{t:equivalence}
The functors
\[
    \begin{tikzcd}[column sep=large, cramped]
    \RegInd
        \ar[r, "\Rel", shift left=2]
        \ar[r, left adjoint]
        \&
    \DilDag
        \ar[l, "\Coisom", shift left=2]
    \end{tikzcd}
\]
form an adjoint equivalence.
\end{theorem}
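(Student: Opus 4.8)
The plan is to assemble the adjoint equivalence out of components that are already in hand. By \cref{p:coisom,p:rel-dilatory} we have functors $\Coisom \colon \DilDag \to \RegInd$ and $\Rel \colon \RegInd \to \DilDag$, and by \cref{p:regularish-is-coisom,p:dilatory-is-rel} we have natural isomorphisms $\eta \colon 1_{\RegInd} \Rightarrow \Coisom \circ \Rel$ and $\varepsilon \colon \Rel \circ \Coisom \Rightarrow 1_{\DilDag}$. The mere existence of these natural isomorphisms already exhibits $\Rel$ and $\Coisom$ as an equivalence of categories. To upgrade this to an \emph{adjoint} equivalence with $\Rel \dashv \Coisom$, unit $\eta$ and counit $\varepsilon$, it remains only to verify the two triangle identities. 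Note that the shapes of $\eta$ and $\varepsilon$ (a unit into $\Coisom \circ \Rel$ and a counit out of $\Rel \circ \Coisom$) already force $\Rel$ to be the left adjoint.

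For the first triangle identity, I would fix an epi-regular independence category $\C$ and chase a relation $[r_1, r_2]$ in $\Rel(\C)$ through the composite $\varepsilon_{\Rel(\C)} \circ \Rel(\eta_\C)$. Unwinding the morphism maps from \cref{p:rel-exists,p:regularish-is-coisom}, the functor $\Rel(\eta_\C)$ sends $[r_1, r_2]$ to $[[1, r_1], [1, r_2]]$, and then $\varepsilon_{\Rel(\C)}$ sends this to $[1, r_2]\,[1, r_1]^\dagger$. The only computational input needed is the identity $[1, r_2]\,[1, r_1]^\dagger = [r_1, r_2]$, which is exactly the dilation calculation from the proof of \cref{p:rel-dilatory} (see \cref{f:dilator-1}). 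Hence $\varepsilon_{\Rel(\C)} \circ \Rel(\eta_\C) = 1_{\Rel(\C)}$.

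For the second triangle identity, I would fix a dilatory dagger category $\D$ and a coisometry $f$ in $\Coisom(\D)$, and chase it through $\Coisom(\varepsilon_\D) \circ \eta_{\Coisom(\D)}$. Here $\eta_{\Coisom(\D)}$ sends $f$ to $[1, f]$, and $\Coisom(\varepsilon_\D)$, being the restriction of $\varepsilon_\D$ to coisometries, sends $[1, f]$ to $f\,1^\dagger = f$. Hence $\Coisom(\varepsilon_\D) \circ \eta_{\Coisom(\D)} = 1_{\Coisom(\D)}$, so both triangle identities hold.

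I do not anticipate any real obstacle: the genuine work—constructing the two functors and proving that $\eta$ and $\varepsilon$ are natural isomorphisms—is already complete, so what remains is the mechanical coherence check above. The one substantive ingredient is the reuse of the identity $[1, r_2]\,[1, r_1]^\dagger = [r_1, r_2]$ from \cref{p:rel-dilatory}; apart from this, the verification is pure bookkeeping, and the two triangle identities together yield the claimed adjoint equivalence $\Rel \dashv \Coisom$.
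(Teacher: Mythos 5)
Your proposal is correct and takes essentially the same approach as the paper: both assemble the adjoint equivalence from the unit \(\eta\) and counit \(\varepsilon\) constructed in \cref{p:regularish-is-coisom,p:dilatory-is-rel} and then verify the triangle identities by direct element-chasing. The only (cosmetic) difference is that the paper checks just the one identity \(\Coisom(\varepsilon_\D) \circ \eta_{\Coisom(\D)} = 1_{\Coisom(\D)}\), invoking the standard fact that when \(\eta\) and \(\varepsilon\) are natural isomorphisms one triangle identity implies the other, whereas you verify both explicitly—your check of the \(\Rel\)-triangle via the identity \([1, r_2]\,[1, r_1]^\dagger = [r_1, r_2]\) from \cref{p:rel-dilatory} is sound.
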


\begin{proof}
The unit \(\eta\) and counit \(\varepsilon\) of the adjoint equivalence were constructed in \cref{p:regularish-is-coisom,p:dilatory-is-rel}. It remains to verify the triangle identities. As \(\eta\) and \(\varepsilon\) are isomorphisms, it suffices to verify only one of them. It is easy to check that \[\Coisom(\varepsilon_\D) \circ \eta_{\Coisom(\D)} = 1_{\Coisom(\D)}. \qedhere\]
\end{proof}

\begin{example}\label{x:finprob:couplings}
    The category \(\Rel(\Coisom(\FinProb))\) is the category of \textit{couplings} or \textit{transport plans} between finite probability spaces. The fact that it is isomorphic to \(\FinProb\), which follows from \cref{p:dilatory-is-rel}, is well known (see, e.g., \cite[Sec.~2]{dahlqvist2018borel} and \cite[Prop.~13.9]{fritz:synthetic}). (Note that restricting to strictly positive probability measures, in this finite context, is equivalent to quotienting the morphisms under almost sure equality. See also \Cref{s:probability-spaces}.)
\end{example}

\subsection{Strict 2-equivalence of 2-categories}\label{s:two-equivalence}

Let us now augment the categories \(\RegInd\) and \(\DilDag\) with 2-cells, and the functors \(\Coisom\) and \(\Rel\) with 2-cell maps, so that the adjoint equivalence in \cref{t:equivalence} becomes a strict 2-adjoint 2-equivalence (i.e., an adjoint equivalence of \(\Cat\)\nobreakdash-enriched categories).
The reader uninterested in 2-category theory may safely skip this subsection.

\begin{definition}
	A natural transformation \(\beta \colon G \Rightarrow G' \colon \D \to \D'\) into a dagger category \(\D'\) is \textit{coisometric} if all of its components are coisometric, that is, for each object \(X\) of \(\D\), the morphism \(\beta_X \colon GX \to G'X\) in \(\D'\) is coisometric. We will also refer to coisometric natural transformations as \textit{natural coisometries}.
\end{definition}

Given dagger categories \(\D\) and \(\D'\), the dagger functors \(\D \to \D'\) and the natural transformations between them form a dagger category (see, e.g., \cite[Ex.~2.7]{heunenkarvonen:limits}). The natural coisometries \(\D \to \D'\) are merely the coisometries in this dagger category.

We take as the 2-cells of $\DilDag$ the natural coisometries. It is easy to check that these natural transformations are closed under vertical and horizontal composition (for horizontal composition, observe that dagger functors preserve coisometries). We have thus extended \(\DilDag\) to a 2-category.

\begin{definition}
    A natural transformation \(\alpha \colon F \Rightarrow F' \colon \C \to \C'\)
    into an independence category \(\C'\) is \textit{independent} if all of its naturality squares are independent, that is, for all morphisms \(f \colon X \to Y\) in \(\C\), the square
	\[
	\begin{tikzcd}
		FX
            \ar[r, "Ff"]
            \ar[d, "\alpha_X" swap]
            \ar[dr, independent, start anchor={south east}, end anchor={north west}]
            \&
        FY
            \ar[d, "\alpha_Y"]
        \\
		F'X
            \ar[r, "F'f" swap]
            \&
        F'Y 
	\end{tikzcd}
	\]
    in \(\C'\) is independent.
\end{definition}

The independent squares in an independence category $\C$ are in bijection with the independent natural transformations to $\C$ from the \textit{walking morphism} (the category with two objects and a single nontrivial morphism between them).

We take as the 2-cells of $\RegInd$ the independent natural transformations. It is easy to check that these natural transformations are closed under vertical and horizontal composition. We have thus extended \(\RegInd\) to a 2-category.

\begin{lemma}
    For each 2-cell \(\beta \colon G \Rightarrow G' \colon \D \to \D'\) in \(\DilDag\), the components of \(\beta\) are the components of an independent natural transformation \[\Coisom(\beta) \colon \Coisom(G)\Rightarrow \Coisom(G').\]
\end{lemma}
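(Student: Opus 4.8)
The plan is to verify two things: that the components $\beta_X$ genuinely assemble into a natural transformation $\Coisom(G)\Rightarrow\Coisom(G')$, and that each of its naturality squares is independent in the sense of \cref{e:ind}. First I would check well-definedness. Since $\beta$ is a 2-cell in $\DilDag$, each component $\beta_X \colon GX \to G'X$ is coisometric, hence a morphism of $\Coisom(\D')$; and since the objects of $\Coisom(\D)$ and $\Coisom(\D')$ are those of $\D$ and $\D'$, these are exactly candidate components for a transformation $\Coisom(G)\Rightarrow\Coisom(G')$. For each morphism $f$ of $\Coisom(\D)$, that is, each coisometry $f \colon X \to Y$ in $\D$, the images $Gf$ and $G'f$ are coisometric because dagger functors preserve coisometries, and the square
\[
\begin{tikzcd}
GX \ar[r, "Gf"] \ar[d, "\beta_X" swap] \& GY \ar[d, "\beta_Y"] \\
G'X \ar[r, "G'f" swap] \& G'Y
\end{tikzcd}
\]
commutes because it is a naturality square of $\beta \colon G \Rightarrow G'$. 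So $\Coisom(\beta)$ is a natural transformation landing in $\Coisom(\D')$.

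It then remains to show each such square is independent. By \cref{e:ind} (matching the square above against the predicate, so that $Gf$, $\beta_X$, $\beta_Y$, $G'f$ play the roles of the top, left, right, and bottom edges), independence amounts to the two equations $G'f\,\beta_X = \beta_Y\,Gf$ and $Gf\,{\beta_X}^\dagger = {\beta_Y}^\dagger\,G'f$. The first is exactly the naturality equation already used. The second equation is the only point where the dagger enters, and is the step I would single out as the crux, though it is short. I would obtain it by applying naturality of $\beta$ to the morphism $f^\dagger \colon Y \to X$ of $\D$, giving $\beta_X\,G(f^\dagger) = G'(f^\dagger)\,\beta_Y$, and then taking daggers of both sides. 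Using that $G$ and $G'$ are dagger functors, so that $\big(G(f^\dagger)\big)^\dagger = Gf$ and $\big(G'(f^\dagger)\big)^\dagger = G'f$, this rearranges to ${\beta_Y}^\dagger\,G'f = Gf\,{\beta_X}^\dagger$, which is precisely the required equation.

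I would close by observing that the argument invokes only the dagger structure together with \cref{p:dildag-indep}, and not dilatoriness of $\D$ or $\D'$: coisometry of $f$ is needed solely to ensure that $Gf$ and $G'f$ are coisometries so that the square lives in $\Coisom(\D')$, whereas the two independence equations follow purely from naturality of $\beta$ and dagger functoriality of $G$ and $G'$.
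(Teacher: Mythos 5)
Your proposal is correct and follows essentially the same route as the paper: well-definedness comes from coisometry of the components of \(\beta\), commutativity from naturality, and the dagger condition of \cref{e:ind} from naturality of \(\beta\) at \(f^\dagger\) combined with dagger functoriality of \(G\) and \(G'\) (the paper writes this as the chain \(\beta_X (Gf)^\dagger = \beta_X (Gf^\dagger) = (G'f^\dagger)\beta_Y = (G'f)^\dagger\beta_Y\), which is your equation after applying \((-)^\dagger\) to both sides). Your closing observation that dilatoriness is never used also matches the paper's proof.
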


\begin{proof}
The reason why we require that each 2-cell \(\beta \colon G \Rightarrow G' \colon \D \to \D'\) of \(\DilDag\) be coisometric should now be clear: for \(\Coisom(\beta)\) to be well-defined, its components, which are the components of \(\beta\), must be morphisms of \(\Coisom(\D')\).

Now, for each morphism $f \colon X \to Y$ in $\Coisom(\D)$, the square
\[
    \begin{tikzcd}
        GX
            \ar[r, "Gf"]
            \ar[d, "\beta_X" swap]
            \ar[dr, independent, start anchor={south east}, end anchor={north west}]
            \&
        GY
            \ar[d, "\beta_Y"]
        \\
        G'X
            \ar[r, "G'f" swap]
            \&
        G'Y 
    \end{tikzcd}
\]
in \(\Coisom(\D)\), which is the naturality square of \(\Coisom(\beta)\) for \(f\), is independent. Indeed, since $G$ and $G'$ are dagger functors, and \(\beta\) is natural,
\[
    \beta_X (Gf)^\dagger = \beta_X (Gf^\dagger) = (G'f^\dagger) \beta_Y = (G'f)^\dagger \beta_Y. \qedhere
\]
\end{proof}

The fact that \(\Coisom\) is horizontally and vertically functorial on 2-cells is easy to check. We have thus extended $\Coisom$ to a 2-functor $\DilDag \to \RegInd$.

\begin{lemma}
For each 2-cell \(\alpha \colon F \Rightarrow F' \colon \C \to \C'\) in \(\RegInd\), the morphisms
\[[1, \alpha_X] \colon FX \to F'X\]
for each object \(X\) of \(\C\) are the components of a natural coisometry
\[\Rel(\alpha) \colon \Rel(F) \Rightarrow \Rel(F').\]
\end{lemma}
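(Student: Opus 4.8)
The plan is to check the two requirements of a natural coisometry in turn: that each prospective component $[1, \alpha_X] \colon FX \to F'X$ is a coisometry in $\Rel(\C')$, and that these components are natural. The first is immediate from \cref{p:coisom-rel}~(i), which says that a morphism of $\Rel(\C')$ is coisometric exactly when it has the form $[1, f]$; since $\alpha_X \colon FX \to F'X$ in $\C'$ and $\Rel(F)$, $\Rel(F')$ act on objects as $F$, $F'$, the morphism $[1, \alpha_X]$ has the correct type $\Rel(F)(X) \to \Rel(F')(X)$ and is of the required form. So the substance of the lemma is naturality.

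For naturality I would show that for every relation $[r_1, r_2] \colon X \to Y$ in $\Rel(\C)$, with $r_1 \colon R \to X$ and $r_2 \colon R \to Y$, the equation $[1, \alpha_Y][Fr_1, Fr_2] = [F'r_1, F'r_2][1, \alpha_X]$ holds, using that $F$ and $F'$ preserve jointly monic spans so that $\Rel(F)[r_1, r_2] = [Fr_1, Fr_2]$ and $\Rel(F')[r_1, r_2] = [F'r_1, F'r_2]$. The first step is to rewrite each composite using the dilator description of \cref{p:rel-dilatory}, namely $[Fr_1, Fr_2] = [1, Fr_2][1, Fr_1]^\dagger$ and $[F'r_1, F'r_2] = [1, F'r_2][1, F'r_1]^\dagger$. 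Applying \cref{p:coisom-rel}~(ii) together with the commutativity of the naturality square of $\alpha$ at $r_2$ turns the left-hand side into $[1, F'r_2][1, \alpha_R][1, Fr_1]^\dagger$, while the right-hand side becomes $[1, F'r_2][1, F'r_1]^\dagger[1, \alpha_X]$; hence it suffices to establish the single identity
\[
    [1, \alpha_R][1, Fr_1]^\dagger = [1, F'r_1]^\dagger[1, \alpha_X].
\]

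The crux, and the step I expect to be the main obstacle, is this identity, for it is exactly here that the independence of $\alpha$ (rather than mere naturality) is used. I would read the displayed identity as the condition $g f^\dagger = v^\dagger u$ of \cref{e:ind} in \cref{p:dildag-indep} for the square of coisometries of $\Rel(\C')$ with top $[1, \alpha_R]$, left $[1, Fr_1]$, bottom $[1, \alpha_X]$ and right $[1, F'r_1]$; hence it follows once that square is shown to be independent in $\Coisom(\Rel(\C'))$. By \cref{p:coisom-rel}~(iv), the square is independent if and only if the square of $\alpha_R$ (top), $Fr_1$ (left), $\alpha_X$ (bottom) and $F'r_1$ (right) is independent in $\C'$. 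This latter square is the diagonal transpose of the naturality square of $\alpha$ at $r_1$, which is independent precisely because $\alpha$ is an independent natural transformation; by axiom \cref{a:independence:4} its transpose is then independent as well. This yields the displayed identity, and with it the required naturality, so that $\Rel(\alpha)$ is a natural coisometry and hence a 2-cell of $\DilDag$.
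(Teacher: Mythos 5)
Your proof is correct, but it takes a genuinely different route from the paper's. The paper proves naturality by unwinding the definition of composition in \(\Rel(\C')\): it forms an independent pullback \((p_1, p_2)\) of the cospan \((\alpha_X, F'r_1)\) and a (strong epic, jointly monic) factorisation \((e, m_1, m_2)\) of \((p_1, (F'r_2)p_2)\), so that \([F'r_1, F'r_2][1,\alpha_X] = [m_1, m_2]\); then independence of the naturality square of \(\alpha\) at \(r_1\) yields, by the universal property of the independent pullback, a comparison morphism with \(p_1 f = Fr_1\) and \(p_2 f = \alpha_R\), whence \((ef, m_1, m_2)\) also factorises \(((Fr_1)1, \alpha_Y(Fr_2))\) and, by \cref{p:ind-pullback-identity}, \([1,\alpha_Y][Fr_1, Fr_2] = [m_1, m_2]\) as well. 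You instead argue equationally in the dagger category \(\Rel(\C')\): you decompose each relation via the dilation identity \([r_1, r_2] = [1, r_2][1, r_1]^\dagger\) from \cref{p:rel-dilatory}, use \cref{p:coisom-rel}~(ii) and commutativity of the naturality square at \(r_2\) to reduce naturality to the single exchange law \([1, \alpha_R][1, Fr_1]^\dagger = [1, F'r_1]^\dagger[1, \alpha_X]\), and then recognise this as the dagger condition of \cref{e:ind} for a square of coisometries in \(\Rel(\C')\), which you transfer (after transposing by axiom \cref{a:independence:4}) from the independence of the naturality square at \(r_1\) via \cref{p:coisom-rel}~(iv). Both arguments use independence of \(\alpha\) exactly at \(r_1\), with mere commutativity sufficing at \(r_2\). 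Your route is more modular—it delegates the pullback-and-factorisation work to \cref{p:coisom-rel}~(iv) and makes visible that independence of a natural transformation is precisely a dagger-exchange condition—while the paper's is more self-contained and explicit, exhibiting the common factorisation \([m_1, m_2]\) of both composites directly from the definition of relational composition.
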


\begin{proof}
    The components of $\Rel(\alpha)$ are coisometric by \cref{p:regularish-is-coisom}. It remains to show that \(\Rel(\alpha)\) is natural. Consider a morphism \([R, r_1, r_2] \colon X \to Y\) in \(\Rel(\C)\). Let \((p_1, p_2)\) be an independent pullback of \((\alpha_X, F'r_1)\), and let \((e, m_1, m_2)\) be a (strong epic, jointly monic) factorisation of \((1p_1, (F'r_2)p_2)\), as depicted in \cref{f:nat-coisom}. 
    \begin{diagram}
        \centering
        \begin{tikzcd}[cramped, row sep={between origins,1.7em}, column sep={between origins,4em}, labels={inner sep=0.3ex}]
            \&
            \&
            \pob
                \arrow[dddl, "Fr_1" swap, shift right=1.5, shorten <=-1.5ex]
                \arrow[dddr, "\alpha_R", shift left=1.5, shorten <=-1.5ex]
                \arrow[d, "f"{inner sep=0.5ex, pos=0.7}, shorten >=-1ex, shorten <=-1.5ex]
                \arrow[ddrr, "Fr_2", shift left=3, shorten <=-0.5ex, shorten >=1.5ex]
            \&
            \&
            \&
        \\[0.8em]
            \&
            \&
            \pob
                \arrow[ddl, "p_1"]
                \arrow[ddr, "p_2" swap]
                \arrow[ddd, "e"{inner sep=0.5ex,pos=0.6}, shorten >=-1ex]
            \&
            \&
            \&
        \\
            \&
            \&
            \&
            \&
            \pob
                \arrow[dddr, "\alpha_Y", shift left=1.2, shorten <=-3.5ex, shorten >=0.5ex]
            \&
        \\
            \&
            \pob
                \arrow[ddl, "1" swap]
                \arrow[ddr, "\alpha_X"{pos=0.3}]
            \&
            \&
            \pob
                \arrow[ddl, "F'r_1"{swap, pos=0.3}]
                \arrow[ddrr, "F'r_2"{pos=0.3}, shift left=0.6, shorten >=0.5ex]
            \&
            \&
        \\[-0.2em]
            \&
            \&
            \pob
            \arrow[dll, "m_1"{pos=0.6}, shift left, crossing over]
            \&
            \&
        \\[0.2em]
            \pob
            \&
            \&
            \pob
            \&
            \&
            \&
            \pob
            \arrow[from=ulll, "m_2"{swap, pos=0.6}, shift right, crossing over, shorten >=0.5ex]
        \end{tikzcd}
        \caption{}
        \label{f:nat-coisom}
    \end{diagram}
    Then \([F'r_1, F'r_2][1,\alpha_X] = [m_1, m_2]\). The naturality square of \(\alpha\) for \(r_1\) is independent, so universality of independent pullbacks yields a unique morphism \(f\) in~\(\C\), as depicted in \cref{f:nat-coisom}, such that \(p_1f = Fr_1\) and \(p_2f = \alpha_R\). Then \((ef, m_1, m_2)\) is a (strong epic, jointly monic) factorisation of \(((Fr_1)1, \alpha_Y (Fr_2))\), also using the naturality square of \(\alpha\) for \(r_2\). However \((1, Fr_2)\) is an independent pullback of \((Fr_2, 1)\) by \cref{p:ind-pullback-identity}. As such, it follows that
    \([1, \alpha_Y][Fr_1, Fr_2] = [m_1, m_2] = [F'r_1, F'r_2][1,\alpha_X]\).
\end{proof}

The proof that \(\Rel\) is vertically and horizontally functorial on 2-cells is routine (use \cref{p:coisom-rel}~(ii)). We have thus extended $\Rel$ to a 2-functor $\RegInd \to \DilDag$.

\begin{theorem}\label{p:2-equivalence}
The 2-functors
\[
    \begin{tikzcd}[column sep=large, cramped]
    \RegInd
        \ar[r, "\Rel", shift left=2]
        \ar[r, left adjoint]
        \&
    \DilDag
        \ar[l, "\Coisom", shift left=2]
    \end{tikzcd}
\]
form a strict 2-adjoint 2-equivalence.
\end{theorem}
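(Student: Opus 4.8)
The plan is to build directly on \cref{t:equivalence}, which already provides an adjoint equivalence of the underlying \(1\)-categories with unit \(\eta\) and counit \(\varepsilon\) that are natural isomorphisms satisfying the triangle identities. By the lemmas preceding this theorem, \(\Rel\) and \(\Coisom\) have been promoted to strict \(2\)-functors. Hence, to exhibit a strict \(2\)-adjoint \(2\)-equivalence — equivalently, an adjoint equivalence in the \(2\)-category \(\Cat\)-\(\Cat\) of \(2\)-categories, \(2\)-functors and \(2\)-natural transformations — the only thing left to verify is that \(\eta\) and \(\varepsilon\) are \(\Cat\)-natural (that is, \(2\)-natural), since their components are already invertible \(1\)-cells by \cref{p:regularish-is-coisom,p:dilatory-is-rel}, and the triangle identities already hold as equations of ordinary natural transformations, untouched by the \(2\)-cell structure.

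First I would spell out the \(2\)-naturality condition for \(\eta \colon 1_{\RegInd} \Rightarrow \Coisom \circ \Rel\). Beyond the ordinary naturality square, one must check, for each \(2\)-cell \(\alpha \colon F \Rightarrow F' \colon \C \to \C'\) in \(\RegInd\), the equality of whiskered \(2\)-cells
\[
    \eta_{\C'} \ast \alpha = \Coisom(\Rel(\alpha)) \ast \eta_\C.
\]
Evaluating at an object \(X\) of \(\C\), the left-hand component is \(\eta_{\C'}(\alpha_X) = [1, \alpha_X]\), while the right-hand component is \(\Coisom(\Rel(\alpha))_X = \Rel(\alpha)_X = [1, \alpha_X]\), using that \(\eta_\C\) is the identity on objects and the definition of \(\Rel\) on \(2\)-cells. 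The two agree, so \(\eta\) is \(2\)-natural.

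For \(\varepsilon \colon \Rel \circ \Coisom \Rightarrow 1_{\DilDag}\) and a \(2\)-cell \(\beta \colon G \Rightarrow G' \colon \D \to \D'\) in \(\DilDag\), I would check analogously that
\[
    \beta \ast \varepsilon_\D = \varepsilon_{\D'} \ast \Rel(\Coisom(\beta)).
\]
At an object \(Z\) the left-hand component is \(\beta_Z\), while the right-hand component is \(\varepsilon_{\D'}([1, \beta_Z]) = \beta_Z 1^\dagger = \beta_Z\), using that \(\Coisom(\beta)\) shares its components with \(\beta\), that \(\Rel(\Coisom(\beta))\) therefore has components \([1, \beta_Z]\), and the formula for \(\varepsilon_{\D'}\) from \cref{p:dilatory-is-rel}. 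Hence \(\varepsilon\) is also \(2\)-natural.

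Finally, since every component of \(\eta\) and \(\varepsilon\) is an isomorphism of categories by \cref{p:regularish-is-coisom,p:dilatory-is-rel}, and the inverse of a \(2\)-natural isomorphism is automatically \(2\)-natural, both \(\eta\) and \(\varepsilon\) are \(2\)-natural isomorphisms. Combined with the triangle identities from \cref{t:equivalence}, this exhibits \((\Rel, \Coisom, \eta, \varepsilon)\) as an adjoint equivalence in \(\Cat\)-\(\Cat\), which is exactly a strict \(2\)-adjoint \(2\)-equivalence. I expect the only genuine subtlety to be stating the \(2\)-naturality conditions with the correct whiskerings; the component computations are immediate from the explicit descriptions of \(\eta\), \(\varepsilon\), and the actions of \(\Rel\) and \(\Coisom\) on \(2\)-cells.
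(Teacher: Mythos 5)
Your proposal is correct and follows essentially the same route as the paper: both reduce the theorem to checking that \(\eta\) and \(\varepsilon\) are strict 2-natural transformations, and verify this componentwise via the computations \(\eta_{\C'}(\alpha_X) = [1,\alpha_X] = \Coisom(\Rel(\alpha))_{\eta_\C(X)}\) and \(\varepsilon_{\D'}([1,\beta_X]) = \beta_X = \beta_{\varepsilon_\D(X)}\). Your additional remarks (that the triangle identities are unaffected by the 2-cell structure, and that inverses of 2-natural isomorphisms are 2-natural) are accurate but not needed beyond what the paper's argument already establishes.
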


\begin{proof}
    Building on \cref{t:equivalence}, it remains to check that \(\eta\) and \(\varepsilon\) are in fact strict 2-natural transformations. For each 2-cell \(\alpha \colon F \Rightarrow F' \colon \C \to \C'\) in \(\RegInd\) and each object \(X\) in \(\C\),
    \[
        \eta_{\C'}(\alpha_X) = [1, \alpha_X] = \Rel(\alpha)_X = \Coisom(\Rel(\alpha))_{\eta_{\C}(X)},
    \]
    so \(\eta\) is a strict 2-natural transformation. Also, for each 2-cell \(\beta \colon G \Rightarrow G' \colon \D \to \D'\) in \(\DilDag\) and each object \(X\) in \(\D\),
    \[
        \varepsilon_{\D'}(\Rel(\Coisom(\beta))_X) = \varepsilon_{\D'}([1, \Coisom(\beta)_X]) = \varepsilon_{\D'}([1, \beta_X]) = \beta_X = \beta_{\varepsilon_\D(X)},
    \]
    so \(\varepsilon\) is also a strict 2-natural transformation.\todo{Inconsistent level of detail here. We brush over the proofs that \(\Rel\) and \(\Coisom\) are 2-functorial, but we have included the proofs that \(\eta\) and \(\epsilon\) are 2-natural.}
\end{proof}

\section{Multivalued surjections in regular categories}
\label{s:bitotal-relations}

Our first running example was about the dilatory dagger category \(\MSurj\) of sets and surjective multivalued functions. We saw in \cref{x:bitotal:coisom} that its wide subcategory of coisometries, which is an epi-regular independence category, is isomorphic to the category \(\Surj\) of sets and surjective functions. Observing that surjective functions are precisely the regular epimorphisms in \(\Set\), 
it is natural to wonder what conditions on a category \(\C\) ensure that its wide subcategory of regular epimorphisms is an epi-regular independence category. The following proposition says that it is sufficient for \(\C\) to be a regular category (see \cref{d:regular-cat}).

\begin{proposition}\label{p:regular-epi-example}
	If \(\C\) is a regular category, then its wide subcategory \(\RegEpi(\C)\) of regular epimorphisms is an epi-regular independence category, where a square
    \[
        \begin{tikzcd}[cramped, sep=large]
            \pob
                \arrow[r, "g"]
                \arrow[d, "f" swap]
                \&
            \pob
                \arrow[d, "v"]
                \\
            \pob
                \arrow[r, "u" swap]
                \&
            \pob
        \end{tikzcd}
    \]
    is declared independent when it is a \emph{regular pushout}~\cite[Def.~1.2]{bourn:denormalized}, that is, when
    \begin{enumerate}
        \item it is commutative, and
        \item the comparison morphism from the span \((f, g)\) to the pullback in \(\C\) of the cospan \((u, v)\) is a regular epimorphism in \(\C\).
    \end{enumerate}
\end{proposition}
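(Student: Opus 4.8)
The plan is to verify the two groups of axioms in turn: first that the stated predicate makes \(\RegEpi(\C)\) an independence category (axioms \cref{a:independence:1} to \cref{a:independence:5}), and then that it is epi-regular (axioms \cref{a:pullback} to \cref{a:epic}). Throughout I would lean on the standard facts that, in a regular category, the regular epimorphisms coincide with the strong epimorphisms, are closed under composition (via \cref{p:strong-epic-props}), and are stable under pullback (axiom (R3) of \cref{d:regular-cat}); together these ensure that \(\RegEpi(\C)\) is genuinely a wide subcategory and that the projections of the pullback of any cospan of regular epimorphisms are again regular epimorphisms.

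For the independence axioms, \cref{a:independence:1} is immediate because commutativity is built into the predicate. Axioms \cref{a:independence:2} and \cref{a:independence:5} hold because in each case the comparison morphism into the relevant \(\C\)-pullback works out to be either an identity or one of the given regular epimorphisms. Axiom \cref{a:independence:4} is automatic from the symmetry of the definition: the pullbacks of \((u,v)\) and of \((v,u)\) share an apex, and the comparison morphism from \((f,g)\) coincides with that from \((g,f)\). The substantive case is the horizontal-composition axiom \cref{a:independence:3}, where I would invoke the pasting lemma for pullbacks: writing \(P_2\) for the pullback of the right-hand cospan and \(P\) for the pullback of the composite cospan, the pasting lemma identifies \(P\) with the result of pulling a projection of \(P_2\) back along the bottom edge \(u\) of the left square. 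The comparison morphism \(c\) of the outer rectangle then factors as \(c = d\,c_1\), where \(c_1\) is the comparison of the left square (a regular epimorphism by hypothesis) and \(d\) is the base change of the comparison \(c_2\) of the right square along \(u\) (a regular epimorphism by hypothesis and pullback stability). As a composite of regular epimorphisms, \(c\) is a regular epimorphism, so the outer rectangle is independent.

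Turning to epi-regularity, for \cref{a:pullback} I would check that the \(\C\)-pullback \(P\) of a cospan \((u,v)\), with its projections, is an independent pullback in \(\RegEpi(\C)\): the projections are regular epimorphisms by stability, the defining square is independent because its comparison morphism is an identity, and for any independent square on \((u,v)\) with span \((f,g)\) the required mediating map is exactly the comparison morphism of that square, which independence forces to be a regular epimorphism (hence a morphism of \(\RegEpi(\C)\)), unique because it is already unique in \(\C\). For \cref{a:epic} I would show every morphism \(e\) is strong epic: given a lifting problem against a span \((m_1,m_2)\) that is jointly monic in \(\RegEpi(\C)\), take the kernel pair \((k_1,k_2)\) of \(e\) in \(\C\), whose legs are regular epimorphisms, and note that the composites \(h k_1\) and \(h k_2\) are again regular epimorphisms; joint monicity in \(\RegEpi(\C)\) then applies to these and yields \(h k_1 = h k_2\), so \(h\) factors through the coequalizer \(e\) of its kernel pair via a map \(d\), which is a regular epimorphism by right cancellation of strong epimorphisms and is the desired diagonal filler. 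Finally \cref{a:factorisation} follows by factoring the span in \(\C\): all three parts of the \(\C\)-factorisation are regular epimorphisms, the jointly monic leg remains jointly monic in the subcategory \(\RegEpi(\C)\) a fortiori, and the strong epic leg is strong epic in \(\RegEpi(\C)\) by \cref{a:epic}.

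The main obstacle is the gap between joint monicity in \(\C\) and joint monicity in \(\RegEpi(\C)\): a priori the latter is weaker, since one tests against fewer morphisms, yet the coequalizer and filler arguments appear to demand the stronger \(\C\)-level condition. The observation that unlocks \cref{a:epic} is that the only maps one actually needs to test against — the composites \(h k_i\) of the morphism with the kernel-pair legs — are themselves regular epimorphisms, so the weaker joint monicity available in \(\RegEpi(\C)\) already suffices; for \cref{a:factorisation} only the easy implication (joint monicity in \(\C\) implies joint monicity in \(\RegEpi(\C)\)) is needed. The remaining delicate point is the bookkeeping in \cref{a:independence:3}, where one must correctly match the comparison morphism of the pasted rectangle with the composite \(d\,c_1\) of the two individual comparison morphisms through the pullback pasting lemma.
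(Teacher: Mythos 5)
Your proof is correct, and its first half coincides with the paper's: the same preliminary facts (regular $=$ strong epimorphisms, closure under composition, pullback stability), the same easy verifications of \cref{a:independence:1,a:independence:2,a:independence:4,a:independence:5}, and for \cref{a:independence:3} the very same pasting-lemma argument — the outer comparison morphism factors as (base change along \(u\) of the right-hand comparison) composed with (the left-hand comparison), each regular epic by hypothesis and pullback stability. Your treatment of independent pullbacks also matches the paper's \cref{a:pullback-2} step. Where you genuinely diverge is in the remaining epi-regularity axioms: the paper does not verify \cref{a:factorisation} and \cref{a:epic} of \cref{d:epiregular} directly, but instead verifies the alternative axioms \cref{a:fact-2} and \cref{a:coeq-kern-pair} of \cref{d:epi-regular-alt} — every morphism of \(\RegEpi(\C)\) is the coequaliser in \(\RegEpi(\C)\) of its kernel pair, via the cancellation fact that \(uv\) regular epic and \(v\) epic imply \(u\) regular epic — and then cites that corollary. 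You instead prove \cref{a:epic} head-on as an orthogonality statement: compose the lifting problem with the kernel-pair legs, observe that these composites are themselves regular epimorphisms so that the \emph{a priori} weaker joint monicity available in \(\RegEpi(\C)\) suffices, factor through the coequaliser in \(\C\), and cancel to land back in \(\RegEpi(\C)\). This is a correct, self-contained inlining of what the paper extracts from \cref{d:epi-regular-alt} (whose proof, via \cref{p:reg-is-strong}, applies joint monicity to exactly the same kind of regular-epic test morphisms), and it has the merit of making the \(\RegEpi(\C)\)-versus-\(\C\) joint-monicity subtlety explicit; the paper's route is shorter because the corollary is already on the shelf. One small presentational gap on your side: in the \cref{a:factorisation} step you assert that all three parts of the \(\C\)-factorisation are regular epic, but for \(m_1\) and \(m_2\) this requires the same cancellation you used for the filler \(d\) (namely \(m_1 e = f\) regular epic and \(e\) epic force \(m_1\) regular epic), which the paper spells out and you should too.
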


\begin{proof}
    A morphism in \(\C\) is regular epic if (similarly to \cref{p:strong-is-regular}) and only if (\cref{p:reg-is-strong}~(iv)) it is strong epic (see \cref{d:strong-epi}). Since strong epimorphisms (in any category) are closed under composition (\cref{p:strong-epic-props}), the regular epimorphisms in \(\C\) do indeed form a wide subcategory of \(\C\).

    To show that \(\RegEpi(\C)\) is an independence category, let us verify axiom~\cref{a:independence:3}; the other axioms are easy to check. Let
    \[
        \begin{tikzcd}[cramped, sep=large]
            \pob
                \arrow[dr, independent]
                \arrow[r, "a"]
                \arrow[d, "f" swap]
                \&
            \pob
                \arrow[d, "g"]
            \\
            \pob
                \arrow[r, "u" swap]
                \&
            \pob
        \end{tikzcd}
        \qquad\text{and}\qquad
        \begin{tikzcd}[cramped, sep=large]
            \pob
                \arrow[dr, independent]
                \arrow[r, "b"]
                \arrow[d, "g" swap]
                \&
            \pob
                \arrow[d, "h"]
            \\
            \pob
                \arrow[r, "v" swap]
                \&
            \pob
        \end{tikzcd}
    \]
    be independent squares in \(\RegEpi(\C)\). We must show that the composite rectangle $(f,h,ba,vu)$ is independent. Condition (i) is obvious. For condition (ii), let \((p_1, p_2)\), \((q_1, q_2)\) and \((r_1, r_2)\) be pullbacks in \(\C\), respectively, of \((v, h)\), \((u, p_1)\) and \((u, g)\). Universality of these pullbacks yields unique morphisms \(k\), \(\ell\) and \(m\) in \(\C\) such that \cref{f:reg-cat}
    \begin{diagram}
        \centering
        \begin{tikzcd}[cramped]
            \pob
                \arrow[drrr, "a", shift left=1.5]
                \arrow[ddddrr, "f"{inner sep=0.3ex}, swap, out=-90, in=150, looseness=1.1, end anchor={west}]
                \arrow[dr, "m"{pos=0.6, inner sep=0.3ex}]
            \\
            \&
            \pob 
                \arrow[rr, "r_2"{pos=0.4}]
                \arrow[dddr, "r_1"{pos=0.6, inner sep=0.3ex}, out=-90, in=135, end anchor={north west}]
                \arrow[dr, "\ell"{pos=0.6, inner sep=0.3ex}]
            \& \&
            \pob 
                \arrow[drrr, "b", shift left=1.5]
                \arrow[dddr, "g"{swap, pos=0.6, inner sep=0.3ex}, out=-90, in=135]
                \arrow[dr, "k"{pos=0.6, inner sep=0.3ex}]
            \\
            \& \&
            \pob 
                \arrow[rr, "q_2"{swap, pos=0.3}, crossing over]
                \arrow[dd, "q_1"]
            \& \&
            \pob
                \arrow[rr, "p_2" swap]
                \arrow[dd, "p_1"]
            \& \&
            \pob
                \arrow[dd, "h"]
            \\ \\
            \& \&
            \pob 
                \arrow[rr, "u" swap]
            \& \&
            \pob
                \arrow[rr, "v" swap]
            \& \&
            \pob
        \end{tikzcd}
        \caption{}
        \label{f:reg-cat}
    \end{diagram}
    is commutative. By the pullback lemma, the span \((q_1, p_2q_2)\) is a pullback of the cospan \((vu, h)\), and the morphism \(\ell m\) is the comparison morphism from the span \((f, ba)\) to this pullback. Our goal is thus to prove that \(\ell m\) is a regular epimorphism in \(\C\). First $m$ is a regular epimorphism in \(\C\) because the square $(f,g,a,u)$ is independent. Similarly $k$ is a regular epimorphism in \(\C\) because the square $(g,h,b,v)$ is independent. By the pullback lemma, \(\ell\) is a pullback of \(k\) along \(q_2\). However regular epimorphisms are pullback stable, so \(\ell\) is also a regular epimorphism in \(\C\). As \(\ell m\) is a composite of regular epimorphisms, it is itself a regular epimorphism, as required.

	To show that $\RegEpi(\C)$ is epi-regular, we will use the alternative characterisation in \cref{d:epi-regular-alt}. For axiom~\cref{a:pullback-2}, consider a cospan \((u, v)\) in \(\RegEpi(\C)\). It has a pullback \((p_1, p_2)\) in \(\C\). As regular epimorphisms in \(\C\) are pullback stable, the span \((p_1, p_2)\) actually comes from \(\RegEpi(\C)\). It is easy to check that the resulting square is an independent pullback.
    
    For axiom~\cref{a:coeq-kern-pair}, consider a morphism \(f\) in \(\RegEpi(\C)\). We need to show that \(f\) remains regular epic when viewed as a morphism in \(\RegEpi(\C)\). Now \(f\) has a kernel pair \((k_1, k_2)\) in \(\C\). As regular epimorphisms in \(\C\) are pullback stable, the span \((k_1, k_2)\) actually comes from \(\RegEpi(\C)\). As \(f\) is regular epic in~\(\C\), it is the coequaliser in \(\C\) of its kernel pair \((k_1, k_2)\). It suffices to show that \(f\) is also the coequaliser in \(\RegEpi(\C)\) of \((k_1, k_2)\). This follows from the fact that in any category, if \(uv\) is regular epic and \(v\) is epic then \(u\) is regular epic~\cite[Prop.~7.62~(3)]{adamek:joy-of-cats}.

	For axiom~\cref{a:fact-2}, consider a span \((f, g)\) in \(\RegEpi(\C)\). It has a (regular epic, jointly monic) factorisation \((e, m_1, m_2)\) in \(\C\). As \(m_1e = f\) is regular epic, it follows that \(m_1\) is regular epic. Similarly \(m_2\) is regular epic. The span \((m_1, m_2)\) clearly remains jointly monic in \(\RegEpi(\C)\). Hence \((e, m_1, m_2)\) is a (regular epic, jointly monic) factorisation of \((f, g)\) in \(\RegEpi(\C)\).
\end{proof}

There are many well-known examples of regular categories. The following one is of particular interest to computer scientists.

\begin{example}
Fix a countably infinite set \(\A\). Let \(\Perm(\A)\) denote the group of permutations of \(\A\). A \textit{support} of an element \(x\) of a left \(\Perm(\A)\)-set \(X\) is a subset \(S \subseteq \A\) such that every permutation of \(\A\) that fixes the elements of \(S\) also fixes \(x\). A \textit{nominal set}~\cite{gabbay1999new} (see also \cite{pitts2013nominal}) is a left \(\Perm(\A)\)-set whose elements all have a finite support. Each element \(x\) of a nominal set \(X\) has a least finite support, which is denoted \(\supp x\). Nominal sets and \(\Perm(\A)\)-equivariant maps form a category \(\Nom\). It is well known (see, e.g., \cite[Sec.~6.3]{pitts2013nominal}) that \(\Nom\) is equivalent to the \textit{Schanuel topos}. Since \(\Nom\) is a topos, it is, in particular, a regular category. A morphism in \(\Nom\) is regular epic if and only if it is surjective. By \cref{p:regular-epi-example}, the category \(\RegEpi(\Nom)\) of Nominal sets and surjective equivariant maps is canonically an epi-regular independence category. 
\end{example}

\subsection{Regular independence categories}
\label{s:nom}

Call an independence category \textit{regular} if
\begin{enumerate}[label=(RI\arabic*)]
    \item 
     \label{a:strong-ind-pull}
    every cospan has a strong independent pullback,
    \item every span has a (strong epic, jointly monic) factorisation, and
    \item 
    \label{a:epic-ind-pull}
    strong epimorphisms are stable under independent pullback.
\end{enumerate}
Clearly every epi-regular independence category is a regular independence category. Also every regular category is a regular independence category where the independent squares are merely the commutative ones.

\begin{example}
The category \(\Nom\) is of course a regular independence category where the independent squares are the commutative ones. Curiously, there is a second system of independent squares in \(\Nom\) that it a regular independence category in a different way.

A square 
\[
\begin{tikzcd}[cramped, sep=large]
    X
    \arrow[r, "g"]
    \arrow[d, "f" swap]
    \&
    A
    \arrow[d, "v"]
    \\
    B
    \arrow[r, "u" swap]
    \&
    C
\end{tikzcd}
\]
in \(\Nom\) is called \emph{relatively separated} if it is commutative and
\[\supp(fx) \cap \supp(gx) = \supp(hx)\]
for all \(x \in X\), where \(h = uf = vg\).
Declaring a square independent if it is relatively separated makes \(\Nom\) into an independence category (see~\cite[Ex.~2.4]{simpson:category-theoretic-structure-independence}). 

Every cospan
\[
    \begin{tikzcd}[cramped]
        A
            \arrow[r, "u"]
            \&
        C
            \&
        B
            \arrow[l, "v" swap]
    \end{tikzcd}
\]
in \(\Nom\) has a canonical independent pullback (see~\cite[Ex.~6.3]{simpson:category-theoretic-structure-independence}), namely, 
the \textit{relatively separated product} 
\[
    A \otimes_C B = \setb[\big]{(a, b) \in A \times_C B}{\supp(a) \cap \supp(b) = \supp(c) \;\;\text{where}\;\; c = ua = vb}
\]
together with the restrictions of the coordinate projection functions. It is easy to check that these are in fact \textit{strong} independent pullbacks, so axiom (RI1) holds.

Axiom (RI2) is the same as axiom \cref{a:reg:fact}, which we already know holds.

For axiom (RI3), consider a cospan 
\[
    \begin{tikzcd}[cramped]
        A
            \arrow[r, "u"]
            \&
        C
            \&
        B
            \arrow[l, "v" swap]
    \end{tikzcd}
\]
in \(\Nom\) where $u$ is surjective. Let \(b \in B\), and let $c = vb$. As \(u\) is surjective, there exists $a \in A$ such that $ua = c$. Build a permutation $\pi$ of \(\A\) that fixes $\supp c$ and maps $\supp a \setminus \supp c$ to some set $S$ disjoint from $\supp b$. Then \((\pi \cdot a, b) \in A \times_C B\) because
\[
    u(\pi \cdot a) = \pi \cdot ua = \pi \cdot c = c = vb.
\]
Now \(\supp c \subseteq \supp a\)~\cite[Lem.~2.12~(i)]{pitts2013nominal}, so~\cite[Prop.~2.10]{pitts2013nominal}
\begin{align*}
    \supp (\pi \cdot a)
    = \pi \cdot \supp a
    = \pi \cdot \paren[\big]{(\supp a \setminus \supp c) \sqcup \supp c}
    = S \sqcup \supp c.
\end{align*}
But \(\supp c \subseteq \supp b\)~\cite[Lem.~2.12~(i)]{pitts2013nominal}, so
\[
    \supp (\pi \cdot a) \cap \supp b = (S \sqcup \supp c) \cap \supp b = \emptyset \sqcup \supp c = \supp c.
\]
Hence \((\pi \cdot a, b) \in A \otimes_B C\).
\end{example}

The following proposition is a generalisation of \cref{p:regular-epi-example} for regular independence categories. 

\begin{proposition}
\label{p:prop-reg-reg-epi-ind}
If \(\C\) is a regular independence category, then \(\RegEpi(\C)\) is an epi-regular independence category where a square in \(\RegEpi(\C)\) is declared independent exactly when it is independent in \(\C\) and the comparison morphism to the independent pullback is a regular epimorphism.
\end{proposition}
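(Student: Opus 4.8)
The plan is to mirror the proof of \cref{p:regular-epi-example}, systematically replacing ``commutative square'' by ``independent square'', ``pullback'' by ``independent pullback'', and the pullback-stability of regular epimorphisms by axiom \cref{a:epic-ind-pull}. Epi-regularity of \(\RegEpi(\C)\) will then be verified through the alternative characterisation of \cref{d:epi-regular-alt}. A useful preliminary observation, which unlocks the analogues of the pullback lemma needed throughout, is that in a regular independence category \emph{every} independent pullback is automatically strong: axiom \cref{a:strong-ind-pull} furnishes a strong independent pullback of each cospan, and since independent pullbacks are unique up to isomorphism and strongness is isomorphism-invariant, the independent pullback of any cospan coincides with the strong one. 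Hence the independent-pullback lemma holds in \(\C\) and independent pullbacks may be pasted freely.

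The crucial step, on which everything else rests, is to show that in \(\C\) the regular epimorphisms coincide with the strong epimorphisms, and that each is the coequaliser of its independent kernel pair. The inclusion regular \(\Rightarrow\) strong is \cref{p:reg-is-strong}. For the converse I would adapt the proof of \cref{p:strong-is-regular}: that argument goes through once one notes that the pasting it performs is legitimate (by the observation above) and that axiom \cref{a:epic-ind-pull} makes the independent pullbacks of the strong epimorphism \(e\) appearing there again strong epic (hence epic), exactly in place of the blanket assumption that every morphism is strong epic. The one genuinely new point is the final invertibility step. For a strong epimorphism \(f\) one obtains, as there, that the independent kernel pair \((j_1, j_2)\) of the monic part \(m_1\) is diagonal; since its legs are strong epic by \cref{a:epic-ind-pull} and jointly monic by \cref{p:ind-pull-is-joint-monic}, they are isomorphisms, so the independent kernel pair of \(m_1\) is \((1,1)\), whence \(m_1\) is monic by the argument for \cref{p:monos} ((iii) implies (i)), which uses only the independence-category axioms; as \(m_1\) is also strong epic by right-cancellation, it is invertible. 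Thus \(f\) is the coequaliser of its independent kernel pair, and in particular regular epic. I expect this lemma to be the main obstacle.

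Granting the lemma, strong epimorphisms compose (\cref{p:strong-epic-props}), so \(\RegEpi(\C)\) is a wide subcategory. To see that it is an independence category with the stated predicate, I would check axioms \cref{a:independence:1} to \cref{a:independence:5}: all but \cref{a:independence:3} are immediate, using that the predicate refines \(\C\)'s own independence predicate and that the comparison morphisms involved are identities or isomorphisms. For \cref{a:independence:3} I would reproduce the pasting computation in the proof of \cref{p:regular-epi-example} (its \cref{f:reg-cat}), now invoking the independent-pullback lemma to identify the comparison morphism of the pasted rectangle, axiom \cref{a:epic-ind-pull} to see that one of its factors is regular epic, and closure of regular epimorphisms under composition to conclude that the comparison is regular epic.

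Finally I would verify \cref{d:epi-regular-alt} for \(\RegEpi(\C)\). For \cref{a:coeq-kern-pair}, the kernel pair of a morphism \(f\) of \(\RegEpi(\C)\) is its independent kernel pair \((k_1, k_2)\), whose legs lie in \(\RegEpi(\C)\) by \cref{a:epic-ind-pull}; by the preliminary lemma \(f\) is the coequaliser of \((k_1, k_2)\) in \(\C\), and the cancellation result \cite[Prop.~7.62~(3)]{adamek:joy-of-cats} shows the unique factorising morphism is again regular epic, so \(f\) is also the coequaliser of \((k_1, k_2)\) in \(\RegEpi(\C)\). For \cref{a:fact-2}, a (strong epic, jointly monic) factorisation \((e, m_1, m_2)\) of a span in \(\C\) (which exists by the factorisation axiom for regular independence categories) has \(m_1, m_2\) regular epic by the same cancellation result, and \((m_1, m_2)\) remains jointly monic in \(\RegEpi(\C)\); since \(e\) is regular epic in \(\RegEpi(\C)\) by \cref{a:coeq-kern-pair}, this is the required factorisation. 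For \cref{a:pullback-2}, the strong independent pullback \((p_1, p_2)\) in \(\C\) of a cospan of regular epimorphisms has regular epic legs by \cref{a:epic-ind-pull}, so it lies in \(\RegEpi(\C)\); being independent in \(\C\) with identity comparison, it is independent in \(\RegEpi(\C)\), and its universal property transfers precisely because the independence predicate of \(\RegEpi(\C)\) is ``independent in \(\C\) with regular epic comparison''.
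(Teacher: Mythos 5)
Your proposal is correct and, in its overall architecture, coincides with the paper's: the paper likewise notes that by axiom \cref{a:strong-ind-pull} every independent pullback is strong (so that independent pullbacks paste), proves as the key lemma that every strong epimorphism is the coequaliser of its independent kernel pair (\cref{p:new-strong-is-reg}, which adapts \cref{p:strong-is-regular} exactly as you do, with \cref{a:epic-ind-pull} replacing the blanket axiom that every morphism is strong epic), and then runs the proof of \cref{p:regular-epi-example} \emph{mutatis mutandis} through the characterisation of \cref{d:epi-regular-alt}, including the same treatment of axiom \cref{a:independence:3} and of \cref{a:pullback-2}, \cref{a:fact-2} and \cref{a:coeq-kern-pair}. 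The one step you handle genuinely differently is the invertibility of the jointly monic part \(m_1\). The paper first generalises descent (\cref{p:gen-descent}: the descent property holds whenever the diagonal morphism is strong epic) and then proves \cref{l:strong-epi-invertible}: since the kernel-pair leg \(j\) is strong epic by \cref{a:epic-ind-pull}, descent shows that \((1,1)\) forms an independent square with \((m_1,m_1)\), and monicity of \(m_1\) follows from the universal property of \((j,j)\). You instead observe that \(j\) is also monic (by \cref{p:ind-pull-is-joint-monic}), hence invertible by \cref{p:strong-epic-props}, so the independent kernel pair may be replaced by \((1,1)\) and the argument of \cref{p:monos} applies; this bypasses the descent lemma entirely, at the small cost of silently using that independence of a square is stable under precomposing its span with an isomorphism (needed to transfer the kernel pair from \((j,j)\) to \((1,1)\)). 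That stability fact is derivable from axioms \cref{a:independence:2} to \cref{a:independence:5} by pasting, so this omission is of the same calibre as steps the paper itself leaves implicit—for instance that \(m_1\) is strong epic by right-cancellation, which you state explicitly and the paper does not. Both routes are sound; yours is slightly more economical, the paper's descent lemma is reusable elsewhere.
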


First, we generalise \cref{p:descent}.

\begin{lemma}
\label{p:gen-descent}
In a regular independence category, if \(h\) is strong epic then
\[
    \begin{tikzcd}[cramped]
        \pob
            \arrow[ddrr, independent]
            \arrow[d, "h" swap]
            \arrow[r, "h"]
            \&
        \pob
            \arrow[r, "g"]
            \&
        \pob
            \arrow[dd, "v"]
        \\
        \pob
            \arrow[d, "f" swap]
            \&
            \&
        \\
        \pob
            \arrow[rr, "u" swap]
            \&
            \&
        \pob
    \end{tikzcd}
    \qquad\text{implies}\qquad
    \begin{tikzcd}[cramped, sep=large]
        \pob
            \arrow[r, "g"]
            \arrow[d, "f" swap]
            \arrow[dr, independent]
            \&
        \pob
            \arrow[d, "v"]
        \\
        \pob
            \arrow[r, "u" swap]
            \&
        \pob
    \end{tikzcd}
    .
\]
\end{lemma}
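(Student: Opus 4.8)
The plan is to follow the proof of \cref{p:descent} essentially verbatim. That argument is set in an epi-regular independence category and uses epi-regularity in exactly two places: to obtain an independent pullback of the cospan $(u,v)$, and to know---via axiom \cref{a:epic}---that $h$ is strong epic. Here the former is provided by axiom \cref{a:strong-ind-pull}, which furnishes a \emph{strong} independent pullback and hence in particular an independent pullback, while the latter is now an explicit hypothesis. So I would first choose a strong independent pullback $(p_1, p_2)$ of $(u, v)$. As the outer rectangle is independent by hypothesis, the ordinary universal property of the independent pullback produces a unique morphism $e$ with $p_1 e = f h$ and $p_2 e = g h$.

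I would then extract the diagonal filler as in \cref{p:descent}. By \cref{p:ind-pull-is-joint-monic} the span $(p_1, p_2)$ is jointly monic, and $h$ is strong epic by hypothesis. The square with $h$ along its left edge, $e$ along its top edge, and the jointly monic span $(p_1, p_2)$ opposite commutes, since $p_1 e = f h$ and $p_2 e = g h$; so the definition of strong epic (\cref{d:strong-epi}) yields a diagonal filler $d$ with $p_1 d = f$ and $p_2 d = g$. Finally, I would apply \cref{p:rev-descent} to the independent square formed by $p_1$, $p_2$, $u$, $v$ together with the morphism $d$. This shows that the rectangle obtained by precomposing $p_1$ and $p_2$ with $d$ is independent; since $p_1 d = f$ and $p_2 d = g$, that rectangle is exactly the square with $f$ on the left, $g$ on top, $u$ on the bottom and $v$ on the right, which is the desired conclusion.

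The only point genuinely worth checking---and the one place I would slow down---is that the two borrowed lemmas survive the weakening from an epi-regular to a mere regular independence category. This is immediate from their proofs: \cref{p:rev-descent} follows purely from the independence-category axioms \cref{a:independence:1} to \cref{a:independence:5}, and \cref{p:ind-pull-is-joint-monic} rests only on \cref{p:rev-descent} and the universal property of independent pullbacks. Neither invokes any epi-regularity axiom, so both remain available, and the argument is ultimately a transcription of \cref{p:descent} in which axiom \cref{a:epic} is traded for the standing hypothesis that $h$ is strong epic.
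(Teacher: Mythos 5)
Your proof is correct and is essentially the paper's own argument: the paper likewise proves this by rerunning the proof of \cref{p:descent}, obtaining the independent pullback from axiom \cref{a:strong-ind-pull} and replacing the appeal to axiom \cref{a:epic} with the hypothesis that \(h\) is strong epic. Your extra check that \cref{p:rev-descent} and \cref{p:ind-pull-is-joint-monic} survive the weakening is sound (indeed, the latter is already stated for arbitrary independence categories), so nothing is missing.
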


\begin{proof}
The proof of \cref{p:descent} works here essentially unchanged. 
The use of axiom \cref{a:epic} is replaced by the outright assumption that \(h\) is strong epic.
\end{proof}

Now we generalise the part of \cref{p:monos} that we need.

\begin{lemma}
\label{l:strong-epi-invertible}
In a regular independence category, if a strong epimorphism \(m\) has an independent kernel pair \((j, j)\), then \(m\) is invertible.
\end{lemma}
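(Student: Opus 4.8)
The plan is to reduce the claim to monicity. Since $m$ is strong epic, \cref{p:strong-epic-props}~(iii) shows that $m$ is invertible as soon as it is monic, so it suffices to prove that $m$ is monic. For this I would show that the trivial span $(1, 1)$ is itself an independent kernel pair of $m$; granting this, the pasting argument from the proof of \cref{p:monos}~((iii)$\Rightarrow$(i))---which uses only the independence axioms \cref{a:independence:1}--\cref{a:independence:5} and the universal property of independent kernel pairs, never epi-regularity---applies unchanged to conclude that $mu = mv$ implies $u = v$.

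To see that $(1, 1)$ is an independent kernel pair of $m$, I would argue in two parts. First, that the square with span $(1, 1)$ and cospan $(m, m)$ is independent: the given independent kernel pair $(j, j)$ is by definition an independent pullback of $(m, m)$, so, since $m$ is strong epic, its leg $j$ is strong epic by the pullback-stability axiom \cref{a:epic-ind-pull}. Applying the generalised descent lemma \cref{p:gen-descent} with $h = j$ and $f = g = 1$---whose hypothesis is that the square with span $(1{\cdot}j, 1{\cdot}j) = (j, j)$ and cospan $(m, m)$ is independent, which holds as it is exactly the independent kernel pair square---then yields that the square with span $(1, 1)$ and cospan $(m, m)$ is independent. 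Second, the universal property: any independent square with span $(f, g)$ and cospan $(m, m)$ factors through $(j, j)$ via a unique $e$ with $je = f$ and $je = g$, forcing $f = g$; hence $e := f = g$ is the unique morphism with $1 \cdot e = f$ and $1 \cdot e = g$, which is precisely the universal property of $(1, 1)$ as an independent pullback of $(m, m)$. Notably this route needs neither that $j$ is monic nor that it is invertible.

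The crux is the independence of the square with span $(1, 1)$ and cospan $(m, m)$. Mere commutativity of a square over the cospan $(m, m)$ does not make it independent, so one cannot simply invoke $m \cdot 1 = m \cdot 1$; this is exactly where strong-epicity of $m$ is indispensable and cannot be dropped. It is what makes $j$ strong epic through \cref{a:epic-ind-pull}, and hence what powers the descent step \cref{p:gen-descent}. Once this square is known to be independent, the remaining pasting and the final appeal to \cref{p:strong-epic-props}~(iii) are routine.
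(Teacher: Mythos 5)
Your proof is correct and takes essentially the same route as the paper's: axiom \cref{a:epic-ind-pull} makes \(j\) strong epic, \cref{p:gen-descent} (applied with \(h = j\), \(f = g = 1\)) makes the square with span \((1,1)\) and cospan \((m,m)\) independent, the pasting \cref{e:pasting-kernel-eq} handles \(mu = mv\), and \cref{p:strong-epic-props}~(iii) concludes invertibility from monic plus strong epic. The only (immaterial) difference is bookkeeping: you first upgrade \((1,1)\) to a full independent kernel pair and invoke \emph{its} universal property, whereas the paper applies the universal property of the given pair \((j,j)\) directly to get \(u = jw = v\).
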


\begin{proof}
By axiom \cref{a:epic-ind-pull}, \(j\) is strong epic. Hence, by \cref{p:gen-descent}, the span \((1, 1)\) forms an independent square with the cospan \((m_1, m_1)\). Consider two morphisms \(u\) and \(v\) satisfying \(m_1u = m_1v\). The pasting \cref{e:pasting-kernel-eq} is independent, so there is a unique morphism \(w\) such that \(jw = u\) and \(jw = v\). In particular, \(u = jw = v\). Hence \(m\) is monic. But it is also strong epic, so it is invertible.
\end{proof}

Next, we generalise \cref{p:strong-is-regular}.

\begin{lemma}
\label{p:new-strong-is-reg}
In a regular independence category, every strong epimorphism is the coequaliser of its independent kernel pair.
\end{lemma}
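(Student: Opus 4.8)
The plan is to transcribe the proof of \cref{p:strong-is-regular}, now taking \(f\) to be a strong epimorphism rather than an arbitrary morphism, and to replace the three ingredients special to epi-regular independence categories that it used. Let \((k_1, k_2)\) be an independent kernel pair of \(f\), let \(g\) satisfy \(gk_1 = gk_2\), and let \((e, m_1, m_2)\) be a (strong epic, jointly monic) factorisation of \((f, g)\), so that \(f = m_1e\) and \(g = m_2 e\). As in \cref{p:strong-is-regular}, it suffices to prove that \(m_1\) is invertible, for then \(m_2 {m_1}^{-1}\) is the unique morphism \(h\) with \(g = hf\), uniqueness holding because \(f\), being strong epic, is epic by \cref{p:strong-epic-props}.

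First I would record that \(m_1\) is itself strong epic. This follows from \(f = m_1 e\) being strong epic by the right-cancellation property of strong epimorphisms: if \(vu\) is strong epic then \(v\) is strong epic, which is immediate from the orthogonality definition \cref{d:strong-epi} upon precomposing a lifting problem for \(v\) with \(u\). Next I would justify that the pasting argument building the morphisms of \cref{f:coeq-kern} is still available. That argument used the independent-pullback lemma, proved in \cref{p:pull-pasting} only for epi-regular independence categories; but the lemma in fact holds in any regular independence category. Indeed, by axiom~\cref{a:strong-ind-pull} every cospan has a \emph{strong} independent pullback, and since an independent pullback is terminal among the spans forming an independent square with the cospan, independent pullbacks are unique up to isomorphism; hence every independent pullback is isomorphic to a strong one and therefore is itself a strong independent pullback. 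By the fibrational characterisation recorded after the definition of strong independent pullback, the independent-pullback lemma follows. With this in hand, the construction of \((j_1, j_2)\), \((p_1, p_2)\), \((q_1, q_2)\) and \((r_1, r_2)\), together with the two chains of equalities yielding \(m_1 j_1 p_2 r_1 = m_1 j_2 p_2 r_1\) and \(m_2 j_1 p_2 r_1 = m_2 j_2 p_2 r_1\), carries over verbatim.

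The essential change is in cancelling \(p_2 r_1\). Joint monicity of \((m_1, m_2)\) again gives \(j_1 p_2 r_1 = j_2 p_2 r_1\), but where \cref{p:strong-is-regular} invoked axiom~\cref{a:epic} to conclude that \(p_2\) and \(r_1\) are epic, I would instead use axiom~\cref{a:epic-ind-pull}. The morphism \(p_2\) is the independent pullback of the strong epimorphism \(e\) along \(j_1\), and \(q_1\) is the independent pullback of \(e\) along \(j_2\), so both are strong epic; then \(r_1\), being the independent pullback of \(q_1\), is strong epic as well. Hence \(p_2 r_1\) is a composite of strong epimorphisms, so it is strong epic and in particular epic by \cref{p:strong-epic-props}, and \(j_1 = j_2\). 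Finally, since \(m_1\) is strong epic and its independent kernel pair \((j_1, j_2) = (j, j)\) has equal legs, \cref{l:strong-epi-invertible} shows that \(m_1\) is invertible, as required.

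I expect the main obstacle to be the middle step: verifying that the independent-pullback lemma survives the passage from epi-regular to merely regular independence categories, and then identifying \(p_2\), \(q_1\) and \(r_1\) correctly as (iterated) independent pullbacks of the strong epimorphism \(e\) so that axiom~\cref{a:epic-ind-pull} can be applied. Once these are in place, the remainder is a mechanical adaptation of \cref{p:strong-is-regular}.
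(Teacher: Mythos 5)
Your proof is correct and takes essentially the same route as the paper's own, which likewise adapts \cref{p:strong-is-regular} \emph{mutatis mutandis}: it uses axiom~\cref{a:strong-ind-pull} to conclude that all independent pullbacks are strong (so the pasting construction of \((r_1,r_2)\) in \cref{f:coeq-kern} survives), replaces axiom~\cref{a:epic} by axiom~\cref{a:epic-ind-pull} to see that \(p_2\) and \(r_1\) are strong epic, and invokes \cref{l:strong-epi-invertible} in place of \cref{p:monos}. If anything, you make explicit two points the paper leaves implicit—that \(m_1\) is strong epic (via right-cancellation of strong epimorphisms, which is needed before \cref{l:strong-epi-invertible} can be applied) and that strong independent pullbacks are closed under isomorphism of spans, so that the independent-pullback lemma itself holds in this generality.
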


\begin{proof}
The proof of \cref{p:strong-is-regular} works here \textit{mutatis mutandis}. By axiom~\cref{a:strong-ind-pull}, all independent pullbacks are strong. Use this fact directly (instead of \cref{p:pull-pasting}) to obtain the span \((r_1, r_2)\). Next, \(p_2\) and \(r_1\) are independent pullbacks of \(e\), which is strong epic. By axiom \cref{a:epic-ind-pull} (instead of axiom \cref{a:epic}), they are themselves also strong epic. Finally, \(m_1\) is invertible by \cref{l:strong-epi-invertible} (instead of \cref{p:monos}).
\end{proof}

Finally, we prove \cref{p:prop-reg-reg-epi-ind}.

\begin{proof}[Proof of \cref{p:prop-reg-reg-epi-ind}]
The proof of \cref{p:regular-epi-example} works here \textit{mutatis mutandis}. In particular, replace all pullbacks and kernel pairs with independent pullbacks and independent kernel pairs. Also replace the uses of the pullback lemma by appeals to the strength of the independent pullbacks. The coincidence of the class of regular epimorphisms with the class of strong epimorphisms is now by \cref{p:reg-is-strong,p:new-strong-is-reg}. When verifying axiom \cref{a:coeq-kern-pair}, the fact that \(f\) is the coequaliser in \(\C\) of its independent kernel pair \((k_1, k_2)\) follows from \cref{p:new-strong-is-reg}.
\end{proof}

\begin{example}
Since \(\Nom\) is an independence category in two different ways, the relations in \(\RegEpi(\Nom)\) have two different compositions: one uses pullbacks while the other uses relatively separated products.
\end{example}

Having seen that many of the results in the present article hold not only for epi-regular independence categories, but, more generally (with minor modifications), for regular independence categories, the reader may wonder why we did not focus on the more general notion of regular independence category from the beginning. Unfortunately, we (the authors) do not know how to generalise \cref{p:2-equivalence}, which is our main theorem. In particular, given a regular independence category \(\C\), we do not know how to characterise the morphisms in the poset-enriched dagger category \(\Rel(\C)\) that come from~\(\C\). They are, in general, neither the coisometries nor the maps. It may turn out in the future that a different notion is more deserving of the name \textit{regular independence category}.

\section{Partial injections in rm-adhesive and restriction categories}

Our second running example was about the dilatory dagger category \(\PInj\) of sets and injective partial functions.

\subsection{Rm-adhesive categories}
\label{s:adhesive}

We saw in \cref{x:partial-injections:isom} that its wide subcategory of isometries, which is a mono-coregular co-independence category, is isomorphic to the category \(\Inj\) of sets and injective functions. Observing that injective functions are precisely the regular monomorphisms in \(\Set\), it is natural to wonder what conditions on a category \(\C\) ensure that its wide subcategory of regular monomorphisms is a mono-coregular co-independence category. 
\cref{p:rm-adhesive} below says that it is sufficient for \(\C\) to be \textit{rm-adhesive}. A category \(\C\) is \textit{rm-adhesive} (or \textit{quasiadhesive}) if and only if it has pushouts along regular monomorphisms, these pushouts are pullback stable and are pullbacks, and regular subobjects are closed under binary union~\cite[Thm.~B]{garner:adhesive}.

\begin{proposition}
\label{p:rm-adhesive}
If \(\C\) is an rm-adhesive category, then its wide subcategory \(\RegMono(\C)\) of regular monomorphisms is a mono-coregular co-independence category, where a square is declared co-independent if it is a pullback square.
\end{proposition}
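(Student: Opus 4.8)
The statement is the formal dual of \cref{p:regular-epi-example}: where that result builds an epi-regular independence category of regular epimorphisms out of a regular category, here the plan is to build a mono-coregular co-independence category of regular monomorphisms out of an rm-adhesive category. So I would run the proof of \cref{p:regular-epi-example} in reverse, translating each use of the regular-category axioms \cref{d:regular-cat} into the corresponding rm-adhesive property, via the dictionary: regular epimorphism $\leftrightarrow$ regular monomorphism, pullback $\leftrightarrow$ pushout along a regular monomorphism, and ``strong epimorphisms are pullback stable'' $\leftrightarrow$ ``pushouts along regular monomorphisms are pullback stable''. One simplification relative to the epi case is that co-independent squares are declared here to be exactly the pullback squares, rather than squares whose comparison morphism has a property, so the co-independence axioms reduce to standard facts about pullbacks.

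First I would check that $\RegMono(\C)$ is a wide subcategory, that is, that regular monomorphisms are closed under composition; in an rm-adhesive category this follows from \textcite{garner:adhesive} (alternatively one shows regular monos coincide with strong monos, which are closed under composition by the dual of \cref{p:strong-epic-props}). Next I would verify the co-independence axioms. Since a square is co-independent exactly when it is a pullback, axiom \cref{a:independence:1} is just the fact that pullback squares commute; axioms \cref{a:independence:2} and \cref{a:independence:4} are the trivial identity and transposition facts for pullbacks; the pasting axiom \cref{a:independence:3} is the pullback lemma, the only extra care being that the composite edge is again a regular monomorphism, by closure under composition; and the reflected form of axiom \cref{a:independence:5} holds because every morphism of $\RegMono(\C)$ is monic, so the kernel pair of such a morphism is $(1,1)$.

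For the coregular axioms I would use the dual of the alternative characterisation in \cref{d:epi-regular-alt}. For co-independent pushouts, given a span $(u,v)$ of regular monomorphisms I would form its pushout $(i_1, i_2)$ in $\C$, which exists because $\C$ has pushouts along regular monos; rm-adhesiveness then makes this square a pullback with $i_1$ and $i_2$ again regular monos, so it is co-independent, and its universal property among pullback (co-independent) squares follows from the pushout property together with the observation that the induced comparison morphism is the inclusion of a binary union of regular subobjects, hence itself regular monic. For the (regular monic, jointly epic) factorisation of a cospan $(a,b)$ of regular monomorphisms, I would take the apex to be the binary union of the regular subobjects $a$ and $b$, which is a regular subobject by the union-closure clause of rm-adhesiveness: the inclusion of the union is the regular monic part, and the two corestrictions are jointly epic precisely because the union is the least regular subobject containing both. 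Finally, for the requirement that every morphism be the equaliser of its cokernel pair, I would take the cokernel pair of a regular mono $m$ to be the pushout of $m$ along itself (again a pushout along a regular mono, hence a pullback with regular monic legs), note that $m$ is the equaliser of this cokernel pair in $\C$ because $m$ is regular monic, and transfer this equaliser property into $\RegMono(\C)$ using the dual of the fact~\cite[Prop.~7.62~(3)]{adamek:joy-of-cats}.

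The main obstacle is that rm-adhesiveness is genuinely weaker than ``$\C^{\op}$ is regular'', so I cannot simply quote \cref{p:regular-epi-example} for the opposite category and must re-derive each dualised step from the three rm-adhesive conditions directly. The most delicate points are the two places where a comparison morphism must be shown to be a regular monomorphism---the universal map out of a co-independent pushout and the monic part of a cospan factorisation---and both of these ultimately rest on the closure of regular subobjects under binary union together with the fact that pushouts along regular monomorphisms are pullbacks.
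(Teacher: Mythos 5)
Your proposal takes essentially the same route as the paper: the co-independence axioms are verified as standard pullback facts (with the reflected axiom \cref{a:independence:5} holding because every morphism of \(\RegMono(\C)\) is monic), and mono-coregularity is checked against the dual of the alternative characterisation in \cref{d:epi-regular-alt}, with co-independent pushouts given by pushouts along regular monomorphisms (which rm-adhesivity makes pullback squares with regular monic legs, universal among co-independent squares by closure of regular subobjects under binary union), factorisations of cospans obtained as intersection-then-union, and the equaliser-of-cokernel-pair condition transferred into \(\RegMono(\C)\) via the dual of \cite[Prop.~7.62~(3)]{adamek:joy-of-cats}. This is, point for point, the paper's proof.

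One justification in your factorisation step needs repair. Joint epicness of the corestrictions \(i_1 \colon A \to A \cup B\) and \(i_2 \colon B \to A \cup B\) does not follow ``precisely because the union is the least regular subobject containing both'': leastness is a statement about regular subobjects of the common codomain \(Z\), whereas joint epicness quantifies over pairs of morphisms out of \(A \cup B\) into an \emph{arbitrary} object, and bridging the two would require equalisers, which an rm-adhesive category is not assumed to possess. The fix is immediate from your own construction: the union is computed as the pushout in \(\C\) of the intersection, and pushout injections are always jointly epic; abstractly, a co-independent pushout cospan is jointly epic by the dual of \cref{p:ind-pull-is-joint-monic}, which is exactly how the paper concludes this step.
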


\begin{proof}
First we must show that \(\RegMono(\C)\) is a co-independence category. The self-dual axioms \cref{a:independence:1,a:independence:4} hold trivially, while the self-dual axioms \cref{a:independence:2,a:independence:3} and the dual of axiom \cref{a:independence:5} are well-known properties of pullbacks. It remains to show that \(\RegMono(\C)\) is mono-coregular. 

For the dual of axiom \cref{a:pullback-2}, consider a span \((u, v)\) in \(\RegMono(\C)\). As \(\C\) is rm-adhesive and \(u\) and \(v\) are regular monic in \(\C\), there is a pushout \((i_1, i_2)\) in \(\C\) of \((u, v)\), the morphisms \(i_1\) and \(i_2\) are regular monic, and the pushout square is also a pullback square. The square is thus a co-independent square in \(\RegMono(\C)\). To see that it is a co-independent pushout, use the fact that regular subobjects are closed under binary union.

For the dual of axiom \cref{a:coeq-kern-pair}, let \(f\) be a morphism in \(\RegMono(\C)\). Then \(f\) has a cokernel pair \((q_1, q_2)\). But pushout squares in rm-adhesive categories are pullback squares, so \((f,f)\) is a pullback in \(\C\) of \((q_1, q_2)\). As \(f\) is monic, it follows that \(f\) is actually an equaliser in \(\C\) of \((q_1, q_2)\). To see that it remains an equaliser of \((q_1, q_2)\) in \(\RegMono(\C)\), use the fact that if \(mn\) is regular monic and \(m\) is monic, then \(n\) is regular monic~\cite[Prop.~7.62~(3)]{adamek:joy-of-cats}.

For the dual of axiom \cref{a:fact-2}, consider a cospan \((f, g)\) in \(\RegMono(\C)\). Let \((p_1, p_2)\) be its pullback in \(\C\). As regular monomorphisms (in any category) are pullback stable, the morphisms \(p_1\) and \(p_2\) are regular monic, and so the resulting square in \(\RegMono(\C)\) is actually a co-independent square. Let \((i_1, i_2)\) be a co-independent pushout of \((p_1, p_2)\). By universality of co-independent pushouts, there is a unique morphism \(e\) in \(\RegMono(\C)\) such that \(ei_1 = f\) and \(ei_2 = g\). As \((i_1, i_2)\) is a co-independent pushout, it is jointly epic by the dual of \cref{p:ind-pull-is-joint-monic}. Hence \((e, i_1, i_2)\) is a (regular monic, jointly epic) factorisation of the cospan \((f, g)\).
\end{proof}

\begin{example}
Let us consider an example from computer science. Fix a set~\(V\). A \emph{$V$\nobreakdash-valued heap}~\cite[Ex.~2.5]{simpson:category-theoretic-structure-independence} is a pair \((L,v)\) consisting of a finite set \(L\) and a function \(v \colon L \to V\). Think of \(V\) as the set of \textit{values} that could be stored in the memory of a computer, and of \(L\) as a set of \textit{locations} in the memory where values are stored. The function \(v\) encodes the values stored in each memory location. A \emph{heap embedding} \(f \colon (L,v) \to (L',v')\) is an injective function \(f \colon L \to L'\) satisfying \(v'f = v\). Heap embeddings model allocation of additional memory. Obviously $V$\nobreakdash-valued heaps and embeddings form a category $\Heap_V$. In particular, this category is a mono-coregular co-independence category (see~\cite[Prop.~7.5]{simpson:category-theoretic-structure-independence}), where a square in \(\Heap_V\) is declared co-independent if the underlying square in \(\Inj\) is co-independent (see \cref{f:partial-injections:ind}). This fact can instead be deduced from the previous proposition. Indeed, consider the inclusion functor \(\cat{F} \colon \FinSet \to \Set\). The regular monomorphisms in the comma category \(\cat{F} \mathbin{\downarrow} V\) are precisely the morphisms in \(\cat{F} \mathbin{\downarrow} V\) that lie over regular monomorphisms in \(\FinSet\), which are merely injective functions. Hence \(\Heap_V\) is merely another name for the category \(\RegMono(\cat{F} \mathbin{\downarrow} V)\). As \(\FinSet\) is rm-adhesive and \(\cat{F}\) preserves pushouts, the category \(\cat{F} \mathbin{\downarrow} V\) is also rm-adhesive (similarly to~\cite[Prop.~6.6~(ii)]{lack:adhesive}). It follows that \(\Heap_V = \RegMono(\cat{F} \mathbin{\downarrow} V)\) is canonically a mono-coregular co-independence category by \cref{p:rm-adhesive}.\todo{Summarise the following into a single sentence capturing the significance of this example: As a slight generalisation of the injections example, we briefly consider an example that was important for the development of independence categories~\cite{simpson:category-theoretic-structure-independence}.
It comes from the computer science notion of \emph{heap}, which describes dynamically allocated memory: in program verification, it is important to make sure that separate processes do not access the same memory. This is studied using special logics called \textit{separation logics} (e.g. \cite{o2001local}). Recently, the co-independent pushout structure of heaps has been used in \cite{kammar2017monad} and there is interest in generalising separation logics to other notions of independence and separation such as conditional independence~\cite{simpson:equivalence-conditional-independence,li2023lilac,li2024nominal}.}
\end{example}

\subsection{Restriction categories}
\label{s:partial-isomorphisms}

There is another perspective on \(\PInj\) worth exploring: it is the wide subcategory of restriction isomorphisms in the restriction category \(\Par\) of sets and partial functions (see \cref{x:partial-injections:dagger}). To give meaning to this observation, let us briefly review the basics of restriction categories.

Restriction categories are a categorical foundation for understanding partiality. A \textit{restriction category} \cite[Sec.~2.1.1]{cockett2002restriction} is a category equipped with a choice of morphism \(\overbar{f} \colon A \to A\) for each morphism \(f \colon A \to B\) such that
\begin{align*}
    f \overbar{f} = f && \overbar{f} \overbar{g} = \overbar{g} \overbar{f} && \overbar{g\overbar{f}} = \overbar{g} \overbar{f} && \overbar{g}f = f \overbar{gf}
\end{align*}
whenever these equations make sense. The morphism $\overbar{f}$ is called the \textit{restriction} of~$f$. A morphism \(f\) in a restriction category is called \textit{total} if \(\overbar{f} = 1\).

One can think of the morphisms in a restriction category as being partially defined: the morphism $\overbar{f}$ encodes the domain of definition of $f$. For example, the category \(\Par\) is a restriction category where the restriction \(\overbar{f}\) of a partial function \(f \colon A \to B\) is the partial function \(A \to A\) represented by the inclusion function \(\supp f \hookrightarrow A\), that is, it is a partial variant of the identity function on \(A\) that is only defined on \(\supp f\).

A \textit{restriction inverse} of a morphism \(f \colon A \to B\) is a morphism of \(g \colon B \to A\) such that \(gf = \overbar{f}\) and \(fg = \overbar{g}\). A morphism \(f\) is called a \textit{restriction isomorphism} if it has a restriction inverse \cite[Sec.~2.3.1]{cockett2002restriction}, in which case it has a unique restriction inverse~\cite[Lem.~2.18~(vii)]{cockett2002restriction}. If \(\C\) is a restriction category, then its restriction isomorphisms form a wide subcategory \(\ResIso(\C)\) of \(\C\). The category \(\ResIso(\C)\) is canonically a dagger category: for each morphism \(f\), the morphism \(f^\dagger\) is defined to be the unique restriction inverse of \(f\). For all morphisms \(f\) and \(g\) in \(\ResIso(\C)\), the equations $ff^\dagger f = f$ and $ff^\dagger gg^\dagger = g g^\dagger ff^\dagger$ hold~\cite[Sec.~2.3.2]{cockett2002restriction}; dagger categories like this are called \textit{inverse categories}.

The restriction isomorphisms in \(\Par\) are precisely the \textit{injective} partial functions; in other words, the dilatory dagger category \(\PInj\) is merely \(\ResIso(\Par)\). Given this observation, it is natural to wonder more generally what conditions on a restriction category \(\C\) guarantee that \(\ResIso(\C)\) is dilatory. \cref{p:res-cat} below says that it is sufficient for \(\C\) to have \textit{restriction pushouts} and \textit{binary joins}.

\textit{Restriction pushouts} are the restriction analogue of (ordinary) pushouts: they are simply pushouts whose injections are total~\cite[Sec.~2]{cockett2007restriction}. 

\textit{Join} is just another name for \textit{supremum} or \textit{least upper bound}. In a restriction category, every hom-set is canonically partially ordered~\cite[Sec.~2.1.4]{cockett2002restriction}, with \(f \leq g\) if \(f = g\overbar{f}\). Intuitively, think of $f \leq g$ as saying that wherever $f$ is defined, $g$ is also defined and is equal to $f$. Parallel morphisms \(f\) and \(g\) are called \textit{compatible} if \(f \overbar{g} = g\overbar{f}\) \cite[Prop.~6.3]{cockett2009boolean}. Intuitively, this says that wherever \(f\) and \(g\) are both defined, they are equal. We say that a restriction category has \textit{binary joins} \cite[Defs.~6.7 and~10.1]{cockett2009boolean} if every compatible pair of morphisms \(f\) and \(g\) has a join \(f \lor g\), and joins are preserved by composition, that is, $v(f \lor g)u = vfu \lor vgu$ for all morphisms \(u\) and \(v\) for which the composites are defined.

Since codilators are defined in terms of isometries, it will be helpful to better understand the isometric restriction isomorphisms. A morphism \(s\colon A \to B\) in a restriction category is called a \textit{restriction section} \cite[Sec.~3.1]{cockett2003restriction} if there exists a morphism \(r \colon B \to A\) such that \(rs = 1\) and \(sr = \overbar{r}\). In this case, the morphism \(s\) is monic, and thus total~\cite[Lem.~2.1~(vi)]{cockett2002restriction}. Hence $\overbar{s} = 1 = rs$. So \(s\) is a restriction isomorphism with restriction inverse \(r\). It follows that a morphism in a restriction category is a restriction section if and only if it is an isometric restriction isomorphism.

\begin{proposition}
    \label{p:res-cat}
	If \(\C\) is a restriction category with restriction pushouts and binary joins, then the dagger category \(\ResIso(\C)\) of restriction isomorphisms in \(\C\) is dilatory.
    
    In particular, the codilator of a restriction isomorphism \(f\) is the restriction pushout \((i_1, i_2)\) in \(\C\) of the span \((\overbar{f}, f)\): every codilation \((s_1, s_2)\) of \(f\) forms a commutative square with the span \((\overbar{f}, f)\), and the unique morphism \(s\) such that \(si_1 = s_1\) and \(si_2 = s_2\) is a restriction section with restriction inverse \(s^\dagger = i_1 {s_1}^\dagger \lor i_2 {s_2}^\dagger\), as depicted in \cref{f:res-dilator}.
    \begin{diagram}
        \centering
        \begin{tikzcd}[sep={0}, cramped]
            \pob
                \arrow[r, "f"]
                \arrow[d, "\overbar{f}" swap]
                \&[4.8em]
            \pob
                \arrow[d, "i_2" swap, shift right]
                \arrow[ddr, "s_2", out=-30, in=90, looseness=1, shorten <=-0.25ex, shorten >=0.25ex]
                \&[1.5em]
            \\[3.6em]
            \pob
                \arrow[r, "i_1", shift left]
                \arrow[rrd, "s_1" swap, out=-60, in=180, looseness=0.8, shorten <=-1ex, shorten >=0.5ex]
                \&
            \pob
                \arrow[dr, "s"{pos=0.1}, shorten >=0.5ex, shorten <=-1ex]
                \&
            \\[1em]
                \&
                \&
            \pob
        \end{tikzcd}
        \caption{}
        \label{f:res-dilator}
    \end{diagram}
\end{proposition}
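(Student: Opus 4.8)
The plan is to show that, for each restriction isomorphism $f \colon A \to B$, the restriction pushout $(i_1, i_2)$ of the span $(\overbar f, f)$ is a codilator of $f$ in $\ResIso(\C)$; since codilators witness dilatoriness (see \cref{d:dilation}), this establishes the proposition. I would proceed in three stages: first verify that $(i_1, i_2)$ is a \emph{codilation}, then verify its initiality (universal) property, and finally read off the promised formula for the restriction inverse of the mediating morphism.

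For the first stage, I would construct the restriction inverses of $i_1$ and $i_2$ using the universal property of the restriction pushout. The pair $(1_A, f^\dagger)$ is a cocone over $(\overbar f, f)$ because $1_A \overbar f = \overbar f = f^\dagger f$, so it induces a unique $r_1 \colon X \to A$ with $r_1 i_1 = 1_A$ and $r_1 i_2 = f^\dagger$; dually $(f, 1_B)$ induces $r_2 \colon X \to B$ with $r_2 i_1 = f$ and $r_2 i_2 = 1_B$. A short calculation with the restriction axioms shows that $\overbar{r_1}$ and $i_1 r_1$ agree after precomposition with both injections (one uses $\overbar{r_1} i_1 = i_1 \overbar{r_1 i_1} = i_1$ and $\overbar{r_1} i_2 = i_2 \overbar{f^\dagger} = i_2 f f^\dagger = i_1 \overbar f f^\dagger = i_1 f^\dagger = i_1 (r_1 i_2)$), so uniqueness in the pushout gives $i_1 r_1 = \overbar{r_1}$; together with $r_1 i_1 = 1_A$ this says $i_1$ is a restriction section, hence an isometric restriction isomorphism with $i_1^\dagger = r_1$. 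The same argument makes $i_2$ a restriction section with $i_2^\dagger = r_2$, and then $i_2^\dagger i_1 = r_2 i_1 = f$, so $(i_1, i_2)$ is a codilation of $f$.

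For the universal property, let $(s_1, s_2)$ be any codilation of $f$, so $s_1, s_2$ are restriction sections (hence total) and $s_2^\dagger s_1 = f$. I would first check that $(s_1, s_2)$ forms a commutative square with $(\overbar f, f)$, i.e. $s_1 \overbar f = s_2 f$; writing $\overbar f = f^\dagger f = (s_1^\dagger s_2)(s_2^\dagger s_1)$ and using that the restriction idempotents $s_1 s_1^\dagger$ and $s_2 s_2^\dagger$ on $Y$ commute (an inverse-category identity), the left-hand side reduces to $s_2 s_2^\dagger (s_1 s_1^\dagger s_1) = s_2 s_2^\dagger s_1 = s_2 f$ by the identity $s_1 s_1^\dagger s_1 = s_1$. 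The restriction pushout then yields a unique $s \colon X \to Y$ with $s i_1 = s_1$ and $s i_2 = s_2$; since this mediating morphism is unique already in $\C$, it is a fortiori the unique such morphism in $\ResIso(\C)$.

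It remains to identify $s$ as a restriction section and compute its restriction inverse, and this is the step I expect to be the main obstacle, since it is where binary joins and their interaction with composition and restriction are essential. Setting $t = i_1 s_1^\dagger \lor i_2 s_2^\dagger$ (the join existing because the two summands are compatible), I would show $ts = 1_X$ and $st = \overbar t$. For $ts = 1_X$ I compute $ts i_1 = t s_1 = i_1 \lor i_2 f = i_1 \lor i_1 \overbar f = i_1$ and $ts i_2 = t s_2 = i_1 f^\dagger \lor i_2 = i_2 \overbar{f^\dagger} \lor i_2 = i_2$, using $s_k^\dagger s_k = 1$, the cocone identity $i_1 \overbar f = i_2 f$, the comparison $i_1 f^\dagger = i_2 f f^\dagger = i_2 \overbar{f^\dagger}$, and the facts $g \overbar h \leq g$ that collapse the joins; uniqueness in the pushout then gives $ts = 1_X$. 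For $st = \overbar t$ I use that composition distributes over joins, giving $st = s_1 s_1^\dagger \lor s_2 s_2^\dagger$, together with the facts that restriction distributes over joins and $\overbar{i_k s_k^\dagger} = \overbar{s_k^\dagger}$, giving $\overbar t = \overbar{s_1^\dagger} \lor \overbar{s_2^\dagger} = st$. Hence $s$ is a restriction section with $s^\dagger = t = i_1 s_1^\dagger \lor i_2 s_2^\dagger$, completing the verification of \cref{f:res-dilator}. The subtlety to watch throughout is that the cocones I feed into the restriction pushout—most notably $(1_A, f^\dagger)$—have non-total legs, so the argument relies on the universal property of restriction pushouts holding for all cocones rather than only total ones.
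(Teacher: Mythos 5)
Your proposal is correct and takes essentially the same route as the paper's own proof: the same cocones \((1_A, f^\dagger)\) and \((f, 1_B)\) fed into the restriction pushout to produce \({i_1}^\dagger\) and \({i_2}^\dagger\), the same joint-epicness arguments, and the same join computations establishing \(ts = 1\) and \(st = \overbar{t}\) for \(t = i_1 {s_1}^\dagger \lor i_2 {s_2}^\dagger\), relying on distribution of composition and restriction over joins exactly as the paper does. The only step you assert rather than verify is the compatibility of \(i_1 {s_1}^\dagger\) and \(i_2 {s_2}^\dagger\) (the paper devotes an explicit chain of identities to it), which is routine in the same style as your other calculations, and your shorter derivation of \(s_1 \overbar{f} = s_2 f\) from commutativity of the idempotents \(s_1 {s_1}^\dagger\) and \(s_2 {s_2}^\dagger\) is a mild simplification of the paper's longer expansion.
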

\begin{proof}
	First, we show that the cospan \((i_1, i_2)\) is a codilation of \(f\), that is, that \(i_1\) and \(i_2\) are restriction sections and \({i_2}^\dagger i_1 = f\). The cospan \((1, f^\dagger)\) forms a commutative square with the span \((\overbar{f}, f)\), so there is a unique morphism \(p_1\) such that \(p_1i_1 = 1\) and \(p_1i_2 = f^\dagger\). Additionally, note that
    \begin{align*}
        \overbar{p_1} i_1
	    &= i_1 \overbar{p_1i_1}
	    = i_1
	    = i_1p_1i_1
    \\ \shortintertext{and}
        \overbar{p_1} i_2
	    &= i_2 \overbar{p_1i_2}
	    = i_2 \overbar{f^\dagger}
	    = i_2 f f^\dagger
	    = i_1 f^\dagger f f^\dagger
	    = i_1 f^\dagger
	    = i_1 p_1 i_2\text.
    \end{align*}
	Since the cospan \((i_1, i_2)\) is jointly epic, \(i_1p_1 = \overbar{p_1}\). Therefore \(i_1\) is a restriction section with ${i_1}^\dagger = p_1$. On the other hand, the cospan \((f, 1)\) also forms a commutative square with the span \((\overbar{f}, f)\), so there is a unique morphism \(p_2\) such that \(p_2i_1 = f\) and \(p_2 i_2 = 1\). Then 
    \begin{align*}
        \overbar{p_2} i_1
	    &= i_1 \overbar{p_2i_1}
	    = i_1 \overbar{f}
	    = i_2 f
	    = i_2p_2i_1\text,
        \\ \shortintertext{and}
        \overbar{p_2} i_2
	    &= i_2 \overbar{p_2i_2}
	    = i_2 \overbar{1}
	    = i_2
	    = i_2p_2i_2\text.
    \end{align*}
	Since the cospan \((i_1, i_2)\) is jointly epic, we get that \(i_2 p_2 = \overbar{p_2}\). So \(i_2\) is also a restriction section with ${i_2}^\dagger = p_2$. As such, also \(f = p_2 i_1 = {i_2}^\dagger i_1\). So \((i_1, i_2)\) is indeed a codilation of \(f\). 

	Next, let \((s_1, s_2)\) be a codilation of \(f\), meaning that
	\begin{align*}
	    {s_1}^\dagger s_1 &= 1, &
        {s_2}^\dagger s_1 &= f,&
	    s_1{s_1}^\dagger &= \overbar{{t_1}^\dagger},
	    \\
	    {s_2}^\dagger s_2 &= 1,&
        &\text{and}&
	    s_2 {s_2}^\dagger &= \overbar{{s_2}^\dagger}\text.
	\end{align*}
	Now the cospan \((s_1, s_2)\) forms a commutative square with the span \((\overbar{f}, f)\) because
	\begin{align*}
	    s_1 \overbar{f}
	    = s_1 f^\dagger f
	    &= s_1 f^\dagger ff^\dagger ff^\dagger f
	    = (s_1 f^\dagger f {s_1}^\dagger)(s_2 ff^\dagger {s_2}^\dagger) s_2f
	    \\ &= (s_2 ff^\dagger {s_2}^\dagger)(s_1 f^\dagger f {s_1}^\dagger)s_2f
	    = s_2 ff^\dagger ff^\dagger ff^\dagger f
	    = s_2f\text.
	\end{align*}
	Hence there is a unique morphism \(s\) such that \(si_1 = s_1\) and \(si_2 = s_2\). It remains to show that \(s\) is restriction section. By the definition of a restriction pushout, $i_1$ and $i_2$ are total. Recall that if $u$ is total then $\overbar{uv} = \overbar{v}$ \cite[Lem.~2.1~(iii)~and~(vi)]{cockett2002restriction}. Hence $\overbar{i_1 {s_1}^\dagger} =  \overbar{{s_1}^\dagger} = s_1 {s_1}^\dagger$ and $\overbar{i_2 {s_2}^\dagger} = \overbar{{s_2}^\dagger} = s_2 {s_2}^\dagger$. It follows that \(i_1 {s_1}^\dagger\) and \(i_2 {s_2}^\dagger\) are compatible. Indeed
	\begin{align*}
	    i_2 {s_2}^\dagger \overbar{i_1 {s_1}^\dagger}
	    &= i_2 {s_2}^\dagger \overbar{{s_1}^\dagger}
	    = i_2 {s_2}^\dagger s_1 {s_1}^\dagger
	    = i_2 f {s_1}^\dagger
	    = i_1 \overbar{f} {s_1}^\dagger
	    = i_1 (s_1 \overbar{f})^\dagger
	    \\ &= i_1 (s_2 f)^\dagger
	    = i_1 f^\dagger {s_2}^\dagger
	    = i_1 {s_1}^\dagger s_2 {s_2}^\dagger
	    = i_1 {s_1}^\dagger \overbar{{s_2}^\dagger}
	    = i_1 {s_1}^\dagger \overbar{i_2 {s_2}^\dagger}\text.
	\end{align*}
	Hence the join \(i_1 {s_1}^\dagger \lor i_2 {s_2}^\dagger\) exists. Our goal is now to show that $(i_1 {s_1}^\dagger \lor i_2 {s_2}^\dagger)s = 1$. Observe that if $u \leq v$, then $u$ and $v$ are compatible and $u \lor v = v$. As every restriction $\overbar{u}$ satisfies $\overbar{u} \leq 1$, it follows that $1 \lor \overbar{u} = 1$. Using this fact, and the fact that joins preserve composition, we compute that 
	\begin{align*}
	    (i_1 {s_1}^\dagger \lor i_2 {s_2}^\dagger)si_1
	    &= (i_1 {s_1}^\dagger \lor i_2 {s_2}^\dagger)s_1
	    = i_1 {s_1}^\dagger s_1 \lor i_2 {s_2}^\dagger s_1
	    \\ &= i_1 \lor i_2 f
	    = i_1 \lor i_2 {i_2}^\dagger i_1
	    = (1 \lor \overbar{{i_2}^\dagger})i_1
	    = i_1
    \\ \shortintertext{and}
	    (i_1 {s_1}^\dagger \lor i_2 {s_2}^\dagger)si_2
	    &= (i_1 {s_1}^\dagger \lor i_2 {s_2}^\dagger)s_2
	    = i_1 {s_1}^\dagger s_2 \lor i_2 {s_2}^\dagger s_2
	    \\ &= i_1 f^\dagger \lor i_2
	    = i_1 {i_1}^\dagger i_2 \lor i_2
	    = (\overbar{{i_1}^\dagger} \lor 1) i_2
	    = i_2\text.
	\end{align*}
	Thus $i_1$ and $i_2$ are jointly epic, and $(i_1 {s_1}^\dagger \lor i_2 {s_2}^\dagger)s = 1$. Finally, since the restriction of the join is the join of the restriction, that is, $\overbar{u \lor v} = \overbar{u} \lor \overbar{v}$ \cite[Lem.~6.6]{cockett2009boolean},  
	\begin{align*}
	    \overbar{i_1 {s_1}^\dagger \lor i_2 {s_2}^\dagger}
	    &= \overbar{i_1 {s_1}^\dagger} \lor \overbar{i_2 {s_2}^\dagger}
	    = \overbar{{s_1}^\dagger} \lor \overbar{{s_2}^\dagger}
	    \\ &= s_1 {s_1}^\dagger \lor s_2 {s_2}^\dagger 
	    = si_1 {s_1}^\dagger \lor s i_2 {s_2}^\dagger
	    = s(i_1 {s_1}^\dagger \lor i_2 {s_2}^\dagger)\text.
	\end{align*}
	Thus $s(i_1 {s_1}^\dagger \lor i_2 {s_2}^\dagger) = \overbar{i_1 {s_1}^\dagger \lor i_2 {s_2}^\dagger}$. So \(s\) is a restriction section, as desired, and moreover \(s^\dagger = i_1 {s_1}^\dagger \lor i_2 {s_2}^\dagger\).
\end{proof}

\begin{example} 
	Let $\CRing_\bullet$ be the category of commutative rings and \textit{non-unital} ring homomorphisms, that is, functions $f \colon R \to S$ such that \(f(r + s) = f(r) + f(s)\) and \(f(rs) = f(r) f(s)\), but possibly not \(f(1) = 1\). Then $\CRing_\bullet$ is a corestriction category, where the corestriction $\overbar{f} \colon S \to S$ of a non-unital ring homomorphism $f \colon R \to S$ is defined as $\overbar{f}(s) = f(1) s$. A corestriction isomorphism in $\CRing_\bullet$ is a non-unital ring homomorphism $f \colon R \to S$ for which there is another non-unital ring homomorphism $f^\dagger \colon S \to R$ such that $f(f^\dagger(s))= f(1) s$ and $f^\dagger(f(r))= f^\dagger(1) r$. The corestriction category $\CRing_\bullet$ has corestriction pullbacks and binary meets (the order does not reverse when dualising), so its wide subcategory of corestriction isomorphisms is canonically a dilatory dagger category. In particular, the dilator of a corestriction isomorphism $f \colon R \to S$ is
    \[R \boxplus_f S = \setb[\big]{(r,s)}{r \in R, s\in S, f(r) = f(1) s}\text.\]
\end{example}

\section{Probability spaces and Markov categories}
\label{s:probability-spaces}

Our third running example was about the dilatory dagger category \(\FinProb\) of finite probability spaces and measure-preserving stochastic matrices. It generalises, almost verbatim, to standard Borel probability spaces and measure-preserving Markov kernels, as we shall see in \cref{s:standard-borel}. In fact, it generalises further to the setting of Markov categories with conditionals, as we will see in \cref{s:general-probability-spaces}.

\subsection{Standard Borel probability spaces}
\label{s:standard-borel}

A \textit{measurable space} is a set \(A\) equipped with a \(\sigma\)-algebra~\(\Sigma_A\). Given measurable spaces \(A\) and \(B\), a \textit{Markov kernel} \(r \colon A \to B\) is a function \(r(-|-) \colon \Sigma_B \times A \to \Reals\) such that
\begin{enumerate}
    \item for all \(V \in \Sigma_B\), the function \(r(V|-) \colon A \to \Reals\) is measurable; and,
    \item for all \(a \in A\), the function \(r(-|a) \colon \Sigma_B \to \Reals\) is a probability measure on \(B\).
\end{enumerate}
The composite of Markov kernels \(r \colon A \to B\) and \(s \colon B \to C\) is defined, for all \(a \in A\) and \(W \in \Sigma_C\), by the \textit{Chapman--Kolmogorov formula}
\[
    (sr)(W|a) \,=\, \int_B s(W|y)\,r(dy|a).
\]

A \textit{probability space} is a pair \((A, \Pr_A)\) where \(A\) is a measurable space and \(\Pr_A\) is a probability measure. Given probability spaces \((A, \Pr_A)\) and \((B, \Pr_B)\), a \textit{measure-preserving Markov kernel} \(r \colon (A, \Pr_A) \to (B, \Pr_B)\) is a Markov kernel \(r \colon A \to B\) such that for all \(V \in \Sigma_B\),
\[
    \int_A r(V|x)\,\Pr_A(dx) \,=\, \Pr_B(V).
\]

A \textit{Bayesian inverse} of a measure-preserving Markov kernel \(r \colon (A, \Pr_A) \to (B, \Pr_B)\) is a measure-preserving Markov kernel \(s \colon (B, \Pr_B) \to (A, \Pr_A)\) such that
\[
    \int_U r(V|x) \, \Pr_A(dx) \,=\, \int_V s(U|y)\, \Pr_B(dy)
\]
for all \(U \in \Sigma_A\) and \(V \in \Sigma_B\). A measurable space \(A\) is called \textit{standard Borel} if \(\Sigma_A\) is the Borel \(\sigma\)-algebra of a completely metrisable topology on \(A\), and a probability space \((A, \Pr_A)\) is called \textit{standard Borel} if \(A\) is standard Borel. Every measure-preserving Markov kernel between standard Borel probability spaces has a Bayesian inverse~\cite[Prop.~3.13]{perrone-vanbelle:martingales}. 

Two measure-preserving Markov kernels \(h, h' \colon (B, \Pr_B) \to (A, \Pr_A)\) are called \textit{almost surely equal} if for all \(U \in \Sigma_A\), there exists \(V \in \Sigma_B\) such that \(\Pr_B(V) = 1\) and \(h(U|b) = h'(U|b)\) for all \(b \in V\). If \(h\) and \(h'\) are both Bayesian inverses of a given measure-preserving Markov kernel, then \(h\) is almost surely equal to \(h'\). The quotient \(\SBProb\) of the category of standard Borel probability spaces and measure-preserving Markov kernels by almost sure equality is a dagger category where the dagger of a morphism is its Bayesian inverse~\cite[Thm.~2.10]{dahlqvist2018borel}. The category \(\FinProb\), which was defined in \cref{x:finprob:dagger}, is equivalent to the full subcategory of  \(\SBProb\) spanned by the finite probability spaces.

The characterisation in \cref{x:finprob:coisom} of the coisometries in \(\FinProb\) as the deterministic maps generalises as follows. A measure-preserving Markov kernel \(f \colon (A, \Pr_A) \to (B, \Pr_B)\) is called \textit{almost surely deterministic} if
\[
    \Pr_A\paren[\big]{\setb[\big]{a \in A}{f(V|a) \in \set{0, 1}}} = 1
\]
for all \(V \in \Sigma_B\). The almost surely deterministic morphisms in \(\SBProb\) form a wide subcategory \(\SBProbDet\). A morphism in \(\SBProb\) is coisometric if and only if it is almost surely deterministic~\cite[Prop.~2.5]{ensarguet2023ergodic}. In other words, \(\Coisom(\SBProb) = \SBProbDet\).

There is another characterisation of the coisometries in \(\SBProb\) in terms of random variables. A \textit{random variable} on a probability space \((\Omega, \Pr)\) valued in a measurable space \(A\) is a measurable function \(X \colon \Omega \to A\). The \textit{pushforward} of \(\Pr\) along \(X\) is the probability measure \(X_* \Pr\) on \(A\) defined by
\[
    (X_* \Pr)(U) = \Pr(X^{-1}U)
\]
for all \(U \in \Sigma_A\). Also, the \textit{delta kernel} of \(X\) is the almost surely deterministic measure-preserving Markov kernel \(\delta_X \colon (\Omega, \Pr) \to (A, X_* \Pr)\) defined by
\[
    \delta_X(U|\omega)
    =
    \begin{cases}
        1 &\text{if \(X(\omega) \in U\), and}\\
        0 &\text{otherwise,}
    \end{cases}
\]
for all \(\omega \in \Omega\) and \(U \in \Sigma_A\). The mapping of a random variable to its delta kernel is \textit{retrofunctorial}; this means that \(\delta_1 = 1\), and that \(\delta_{fX} = \delta_f \delta_X\) for all random variables \(X\) and measurable functions \(f\) for which the composite \(fX\) is defined. 

Every almost surely deterministic morphism between \textit{standard Borel} probability spaces is a delta kernel of some random variable. Indeed, consider a representative \(f \colon (\Omega, P) \to (A, P_A)\) of an almost surely deterministic morphism in \(\Prob\). As \(A\) is standard Borel, \(\Sigma_A\) is countably generated. Hence, since \(f\) is almost surely deterministic, there exists \(E \in \Sigma_\Omega\) with \(P(E) = 1\) such that \(f(U|\omega) \in \set{0, 1}\) for all \(\omega \in \Omega\) and all \(U \in \Sigma_A\). The restriction of \(f\) to \(E\) is a delta kernel~\cite[Ex.~10.5]{fritz:synthetic}. Redefining \(f\) to be some arbitrary delta kernel on the complement of \(E\) transforms \(f\) into a delta kernel that is almost surely equal to \(f\).

In \cref{x:finprob:dilator}, we saw that the dagger category \(\FinProb\) is dilatory. We will now show that the larger dagger category \(\SBProb\) is similarly dilatory.

The \textit{product} of two measurable spaces \(A\) and \(B\) is their Cartesian product \(A \times B\) equipped with the \(\sigma\)-algebra generated by all subsets of \(A \times B\) of the form \(U \times V\) where \(U \in \Sigma_A\) and \(V \in \Sigma_V\). Together with the coordinate projection functions \(p_1 \colon A \times B \to A\) and \(p_2 \colon A \times B \to B\), this is a product of \(A\) and \(B\) in the category of measurable spaces and measurable functions.

Consider a measure-preserving Markov kernel \(r \colon (A, \Pr_A) \to (B, \Pr_B)\). The \textit{product} (or \textit{semidirect product}) of \(\Pr_A\) and \(r\) is the probability measure \(\Pr_A \otimes r\) on \(A \times B\) defined, for all \(E \in \Sigma_{A \times B}\), by
\[
    (\Pr_A \otimes r)(E) \,=\, \int_A r(E_x|x)\,\Pr_A(dx),
\]
where \(E_a = \setb[\big]{b \in B}{(a, b) \in E}\) is in \(\Sigma_B\) for each \(a \in A\). It is~\cite[Cor.~14.26]{klenke:probability-theory} the unique probability measure \(\mu\) on \(A \times B\) such that
\[
    \mu(U \times V) \,=\, \int_U r(V|x)\, \Pr_A(dx)
\]
for all \(U \in \Sigma_A\) and \(V \in \Sigma_B\). Observe that \((p_1)_*(\Pr_A \otimes r) = \Pr_A\) because
\[
    (p_1)_*(\Pr_A \otimes r)(U)
    = (\Pr_A \otimes r)(U \times B)
    = \int_{U} r(B|x)\,\Pr_A(dx)
    = \Pr_A(U)
\]
for all \(U \in \Sigma_A\), and that \((p_2)_*(\Pr_A \otimes r) = \Pr_B\) because
\[
    (p_2)_*(\Pr_A \otimes r)(V)
    = (\Pr_A \otimes r)(A \times V)
    = \int_A r(V|x) \, \Pr_A(dx)
    = \Pr_B(V)
\]
for all \(V \in \Sigma_B\).

\begin{proposition}
\label{p:krn:dilator}
The dagger category \(\SBProb\) is dilatory.

In particular, for each morphism \(r \colon (A, \Pr_A) \to (B, \Pr_B)\), the span
\[\begin{tikzcd}[cramped]
    (A, \Pr_A) \&
    (A \times B, \Pr_A \otimes r)
        \arrow[l, "\delta_{p_1}" swap]
        \arrow[r, "\delta_{p_2}"]\&
    (B, \Pr_B)
\end{tikzcd}\]
of coisometries is a dilator of \(r\).
\end{proposition}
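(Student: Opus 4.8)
The plan is to mirror the proof in \cref{x:finprob:dilator} for the finite case, replacing finite sums by integrals and the explicit conditional-distribution computations by appeals to the defining property of Bayesian inverses together with uniqueness of measures on the $\pi$-system of measurable rectangles. The pushforward identities $(p_1)_*(\Pr_A \otimes r) = \Pr_A$ and $(p_2)_*(\Pr_A \otimes r) = \Pr_B$ established above already make the span well-typed, so there are two things left to prove: that $(\delta_{p_1}, \delta_{p_2})$ is a dilation of $r$, and that it is terminal among dilations.

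For the \emph{dilation} part, the kernels $\delta_{p_1}$ and $\delta_{p_2}$ are almost surely deterministic, hence coisometric since $\Coisom(\SBProb) = \SBProbDet$. It remains to check $\delta_{p_2} {\delta_{p_1}}^\dagger = r$. First I would identify the Bayesian inverse concretely: writing $E_a = \setb{b \in B}{(a, b) \in E}$, I claim ${\delta_{p_1}}^\dagger(E \mid a) = r(E_a \mid a)$. This is verified by checking the defining equation of a Bayesian inverse, using that $\delta_{p_1}(U \mid (a, b)) = \ind[a \in U]$ and the disintegration formula $(\Pr_A \otimes r)(E) = \int_A r(E_x \mid x)\,\Pr_A(dx)$. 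Feeding this into the Chapman--Kolmogorov formula and using $\delta_{p_2}(V \mid (a, b)) = \ind[b \in V]$ then yields $(\delta_{p_2} {\delta_{p_1}}^\dagger)(V \mid a) = r(V \mid a)$.

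For \emph{terminality}, consider another dilation. Since every coisometry in $\SBProb$ is almost surely deterministic and every almost surely deterministic morphism between standard Borel spaces is a delta kernel, it is of the form $(\delta_X, \delta_Y)$ for random variables $X \colon \Omega \to A$ and $Y \colon \Omega \to B$ with $X_*\Pr = \Pr_A$, $Y_*\Pr = \Pr_B$, and $\delta_Y {\delta_X}^\dagger = r$. The candidate comparison coisometry is $\delta_Z$ for $Z = (X, Y) \colon \Omega \to A \times B$. That the triangle commutes is immediate from retrofunctoriality of $\delta$, since $p_1 Z = X$ and $p_2 Z = Y$ give $\delta_{p_1}\delta_Z = \delta_X$ and $\delta_{p_2}\delta_Z = \delta_Y$. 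Uniqueness follows because any comparison coisometry is itself a delta kernel $\delta_W$, and $\delta_{p_i}\delta_W = \delta_{p_i W}$ forces $p_1 W = X$ and $p_2 W = Y$ almost surely, whence $W = Z$ almost surely.

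The main obstacle is the well-definedness check underlying the existence half: that $\delta_Z$ is a morphism $(\Omega, \Pr) \to (A \times B, \Pr_A \otimes r)$, i.e. $Z_*\Pr = \Pr_A \otimes r$. This is where the hypothesis $\delta_Y {\delta_X}^\dagger = r$ does its work. Unpacking the Chapman--Kolmogorov formula for $\delta_Y {\delta_X}^\dagger$ gives $r(V \mid a) = {\delta_X}^\dagger(Y^{-1}V \mid a)$ for each $V \in \Sigma_B$ and $\Pr_A$-almost every $a$; integrating over $U \in \Sigma_A$ and applying the defining equation of the Bayesian inverse ${\delta_X}^\dagger$ turns this into $\int_U r(V \mid a)\,\Pr_A(da) = \Pr(X^{-1}U \cap Y^{-1}V)$. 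The right side is $Z_*\Pr(U \times V)$ and the left side is $(\Pr_A \otimes r)(U \times V)$, so the two measures agree on measurable rectangles and hence, these forming a $\pi$-system generating $\Sigma_{A \times B}$, everywhere. The genuine work throughout is measure-theoretic bookkeeping---handling almost-sure equality, invoking uniqueness of disintegrations in the standard Borel setting, and extending equalities from rectangles to the full $\sigma$-algebra---rather than any new conceptual ingredient beyond the finite case.
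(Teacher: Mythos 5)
Your proposal is correct and takes essentially the same route as the paper: your claimed identity \({\delta_{p_1}}^\dagger(E\mid a) = r(E_a\mid a)\) is precisely the paper's \cref{l:bloom}, and your terminality argument (representing the competing dilation by delta kernels via the standard Borel assumption, retrofunctoriality for existence of the comparison, agreement of \(\ang{X,Y}_*\Pr\) with \(\Pr_A\otimes r\) on measurable rectangles, and almost-sure uniqueness of the comparison) mirrors the paper's proof step for step. The only point the paper makes explicit that you fold into ``measure-theoretic bookkeeping'' is that \((E,a)\mapsto r(E_a\mid a)\) is actually a Markov kernel (measurability in \(a\)), which the paper secures by citing a theorem of Klenke before verifying the Bayesian-inverse equation.
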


\begin{lemma}
\label{l:bloom}
In the notation of \cref{p:krn:dilator}, for all \(a \in A\) and \(E \in \Sigma_{A \times B}\),
\[
{\delta_{p_1}}^\dagger (E|a)
= r(E_a|a).
\]
\end{lemma}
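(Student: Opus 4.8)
The plan is to recognise that, by the definition of the dagger on \(\SBProb\), the morphism \({\delta_{p_1}}^\dagger\) is the Bayesian inverse of the coisometry \(\delta_{p_1} \colon (A \times B, \Pr_A \otimes r) \to (A, \Pr_A)\). Since Bayesian inverses are unique up to almost sure equality, it suffices to exhibit the particular kernel \(s \colon (A, \Pr_A) \to (A \times B, \Pr_A \otimes r)\) defined by \(s(E|a) = r(E_a|a)\), where \(E_a = \setb[\big]{b \in B}{(a, b) \in E}\), and to verify that it is a Bayesian inverse of \(\delta_{p_1}\); the displayed identity is then precisely the assertion that this representative computes \({\delta_{p_1}}^\dagger\).

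First I would confirm that \(s\) is a measure-preserving Markov kernel. Measurability of the map \(a \mapsto r(E_a|a)\) for each fixed \(E \in \Sigma_{A \times B}\) is exactly the fact needed for the semidirect product \(\Pr_A \otimes r\) to be well-defined, so it may be taken as given. For fixed \(a\), the slice operation \(E \mapsto E_a\) commutes with countable unions and with complements, so \(E \mapsto r(E_a|a)\) is a probability measure on \(A \times B\), of total mass \(r(B|a) = 1\). Measure preservation is then immediate from the very definition of \(\Pr_A \otimes r\), since \(\int_A s(E|a)\,\Pr_A(da) = \int_A r(E_a|a)\,\Pr_A(da) = (\Pr_A \otimes r)(E)\).

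Next I would verify the defining equation of a Bayesian inverse, namely
\[
    \int_E \delta_{p_1}(U \mid (a,b))\,(\Pr_A \otimes r)(d(a,b)) \,=\, \int_U s(E|a)\,\Pr_A(da)
\]
for all \(U \in \Sigma_A\) and \(E \in \Sigma_{A \times B}\). On the left, \(\delta_{p_1}(U \mid (a,b)) = \ind_U(a) = \ind_{U \times B}(a,b)\), so the left side equals \((\Pr_A \otimes r)(E \cap (U \times B))\). The key observation is the slice identity \((E \cap (U \times B))_a = E_a\) when \(a \in U\) and \(= \emptyset\) otherwise; substituting this into the defining formula for \(\Pr_A \otimes r\) yields \((\Pr_A \otimes r)(E \cap (U \times B)) = \int_U r(E_a|a)\,\Pr_A(da)\), which is exactly the right side. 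The whole argument is routine measure-theoretic bookkeeping, and the only points demanding care are this slice identity and the tacit appeal to measurability of \(a \mapsto r(E_a|a)\)—both already implicit in the construction of the semidirect product. Uniqueness of Bayesian inverses then identifies \(s\) with \({\delta_{p_1}}^\dagger\), giving the claimed formula.
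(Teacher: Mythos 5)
Your proposal is correct and takes essentially the same approach as the paper: both exhibit the kernel \((E, a) \mapsto r(E_a|a)\), check that it is a measure-preserving Markov kernel \((A, \Pr_A) \to (A \times B, \Pr_A \otimes r)\), and verify the Bayesian-inverse equation against \(\delta_{p_1}\) via the slice identity for \(E \cap (U \times B)\). The only difference is bookkeeping: the paper constructs this kernel by appealing to \cite[Thm.~14.25]{klenke:probability-theory} (which settles measurability of \(a \mapsto r(E_a|a)\)), whereas you take that measurability as implicit in the well-definedness of \(\Pr_A \otimes r\) — a legitimate shortcut, since it is the same standard fact.
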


\begin{proof}
By \cite[Thm.~14.25]{klenke:probability-theory}, the equation
\[
    k(E|a) = \int_A \delta_1(dx|a) \int_B r(dy|x) \, \ind_E(x, y)
\]
defines a Markov kernel \(k \colon A \to A \times B\), where \(\ind_E\) is the \textit{indicator function} of \(E\). For all \(a \in A\) and \(E \in \Sigma_{A \times B}\),
\[
    k(E|a) = \int_B r(dy|a)\, \ind_E(a, y) = \int_{E_a} r(dy|a) = r(E_a|a),
\]
so \(k\) is a measure preserving Markov kernel from \((A, \Pr_A)\) to  \((A \times B, \Pr_A \otimes r)\). Finally,
\begin{align*}
    \int_E \delta_{p_1}(U|z)\,(\Pr_A \otimes r)(dz)
    &= (\Pr_A \otimes r)\paren[\big]{E \cap (U \times B)}\\
    &= \int_A r\paren[\big]{\paren[\big]{E \cap (U \times B)}_x|x}\, \Pr_A(dx)\\
    &= \int_U r(E_x|x)\, \Pr_A(dx)\\
    &= \int_U k(E|x)\, \Pr_A(dx)
\end{align*}
for all \(E \in \Sigma_{A \times B}\) and all \(U \in \Sigma_A\), so \(k^\dagger = \delta_{p_1}\).
\end{proof}

\begin{proof}[Proof of \cref{p:krn:dilator}]
First, for all \(a \in A\) and \(V \in \Sigma_B\),
\[
    (\delta_{p_2} {\delta_{p_1}}^\dagger)(V|a)
    = \int_{A \times B} \delta_{p_2}(V|z)\, {\delta_{p_1}}^\dagger(dz|a)
    = {\delta_{p_1}}^\dagger(A \times V|a)
    = r(V|a)
\]
by \cref{l:bloom}, so \((\delta_{p_1}, \delta_{p_2})\) is a dilation of \(r\).

Consider another dilation of \(r\). It is necessarily of the form
\[
    \begin{tikzcd}[cramped]
        (A, \Pr_A) \&
        (\Omega, \Pr)
            \arrow[l, "\delta_{X}" swap]
            \arrow[r, "\delta_{Y}"]\&
        (B, \Pr_B)
    \end{tikzcd}
\]
where \(X\) and \(Y\) are random variables on \((\Omega, \Pr)\) valued, respectively, in \(A\) and \(B\) such that \(\Pr_A = X_*\Pr\) and \(\Pr_B = Y_* \Pr\). Let \(\ang{X, Y} \colon \Omega \to A \times B\) denote the product pairing of \(X\) and \(Y\) in the category of measurable spaces. Then
\begin{align*}
    \int_{U} (\delta_Y {\delta_X}^\dagger)(V|x)\, \Pr_A(dx)
    &=\, \int_{x \in U} \int_{\omega \in \Omega} \delta_Y(V|\omega)\, {\delta_X}^\dagger (d\omega |x)\, \Pr_A(dx)\\
    &=\, \int_U {\delta_X}^\dagger (Y^{-1} V|x)\, \Pr_A(dx)\\
    &=\, \int_{Y^{-1} V} \delta_X(U|\omega) \,\Pr(d\omega)\\
    &=\, \Pr(X^{-1} U \mathrel{\cap} Y^{-1} V)\\
    &=\, (\ang{X, Y}_* \Pr)(U \times V)\\
    \intertext{for all \(U \in \Sigma_A\) and all \(V \in \Sigma_B\). But \(\delta_Y {\delta_X}^\dagger = r\), so}
    \int_{U} (\delta_Y {\delta_X}^\dagger)(V|x)\, \Pr_A(dx) 
    \,&=\, \int_{U} r(V|x)\, \Pr_A(dx)
    \,=\, (\Pr_A \otimes r)(U \times V)
\end{align*}
for all \(U \in \Sigma_A\) and all \(V \in \Sigma_B\). It follows that \(\Pr_A \otimes r = \ang{X, Y}_* \Pr\).

By retrofunctoriality of delta kernels, the diagram
\[
    \begin{tikzcd}[row sep=large]
        \&
        (\Omega, \Pr)
            \arrow[dl, "\delta_{X}" swap]
            \arrow[dr, "\delta_{Y}"]
            \arrow[d, "\delta_{\ang{X, Y}}"]
        \&
    \\
        (A, \Pr_A) \&
        (A \times B, \Pr_A \otimes r)
            \arrow[l, "\delta_{p_1}"]
            \arrow[r, "\delta_{p_2}" swap]
            \&
        (B, \Pr_B)
    \end{tikzcd}
\]
is commutative. For uniqueness, consider a coisometry \(f \colon (\Omega, \Pr) \to (A \times B, \Pr_A \otimes r)\) such that \(\delta_{p_1}f = \delta_X\) and \(\delta_{p_2}f = \delta_Y\). As \(\Omega\) and \(A \times B\) are standard Borel, we know that \(f = \delta_Z\) for some random variable \(Z\) on \((\Omega, \Pr)\) valued in \(A \times B\) such that \(Z_*\Pr = \Pr_A \otimes r\). Now \(p_1 Z\) is almost surely equal (in the standard sense for measurable functions) to \(X\) because \(\delta_{p_1} \delta_Z = \delta_X\)~\cite[Ex.~13.3, countably generated case]{fritz:synthetic}. Similarly \(p_2 Z\) is almost surely equal to \(Y\). Hence \(Z\) is almost surely equal to \(\ang{X, Y}\). Hence \(f = \delta_Z = \delta_{\ang{X,Y}}\).
\end{proof}

Consider two random variables \(X\) and \(Y\) on a probability space \((\Omega, \Pr)\) valued, respectively, in measurable spaces \(A\) and \(B\). For each \(V \in \Sigma_B\), there exists, by the Radon--Nikodým theorem, a measurable function \(\CPr[Y \mathbin{\in} V|X \mathbin{=} {-}] \colon A \to \Reals\) such that
\[
    \int_U \CPr[Y \mathbin{\in} V|X \mathbin{=} x]\, (X_*\Pr)(dx)
    \,=\, \Pr(X^{-1}U \mathbin{\cap} Y^{-1}V)
\]
for all \(U \in \Sigma_A\). Also, every two such measurable functions \(A \to \Reals\) are equal to each other \((X_*P)\)-almost everywhere. Unfortunately, for fixed \(a \in A\), the function \(\CPr[Y \mathbin{\in} {-}|X \mathbin{=} a] \colon \Sigma_B \to \Reals\) is not necessarily a probability distribution.

A \textit{regular conditional distribution} of \(Y\) given \(X\) is (see also \cite[Def.~8.28]{klenke:probability-theory}) a Markov kernel \(r \colon A \to B\) such that
\[
    \int_U r(V|x)\, (X_*\Pr)(dx)
    \,=\,
    \Pr(X^{-1}U \mathbin{\cap} Y^{-1}V)
\]
for all \(V \in \Sigma_B\), that is, such that
\[
    r(V|a)\,=\,\CPr[Y \mathbin{\in} V|X \mathbin{=} a]
\]
for all \(V \in \Sigma_B\) and \((X_*\Pr)\)-almost all \(a \in A\). Assuming that \(\delta_X\) has a Bayesian inverse, the Markov kernel \(\delta_Y {\delta_X}^\dagger\) is a regular conditional distribution of \(Y\) given \(X\). Indeed,
\[
    (\delta_Y {\delta_X}^\dagger)(V|a)
    \,=\, \int_\Omega \delta_Y(V|\omega) \, {\delta_X}^\dagger(d\omega|a)
    \,=\, {\delta_X}^\dagger(Y^{-1}V|a)
\]
for all \(a \in A\) and \(V \in \Sigma_B\), so
\begin{align*}
    \int_U (\delta_Y {\delta_X}^\dagger)(V|x)\, (X_*\Pr)(dx)
    \,&=\, \int_U {\delta_X}^\dagger(Y^{-1}V|x)\, (X_*\Pr)(dx)
    \\&=\, \int_{Y^{-1}V} \delta_X(U|\omega)\, \Pr(d\omega)
    \,=\, \Pr(X^{-1}U \mathbin{\cap} Y^{-1}V).
\end{align*}

The following proposition generalises the characterisation in~\Cref{x:finprob:ind} of independent squares in \(\FinProbDet\).

\begin{proposition}\label{p:krn:ind}
    Consider a commutative diagram
	\[
	\begin{tikzcd}[row sep=large]
		(\Omega, \Pr)
            \ar[d, "\delta_X" swap]
            \ar[r, "\delta_Y"]
            \ar[dr, "\delta_Z"]
            \&
        (B, Y_* \Pr)
            \ar[d, "\delta_g"]
        \\
		(A, X_* \Pr)
            \ar[r, "\delta_f" swap]
            \&
        (C, Z_* \Pr)
	\end{tikzcd}
	\]
    in \(\SBProbDet\). The outer square is independent if and only if the random variables \(X\) and \(Y\) are conditionally independent given the random variable \(Z\).
\end{proposition}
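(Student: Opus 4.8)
The plan is to reduce the statement to the characterisation of independent squares of coisometries from \cref{p:dildag-indep}, and then to identify the resulting equation of Markov kernels with the probabilistic notion of conditional independence, exactly as in the finite case treated in \cref{x:finprob:ind}. Since \(\SBProb\) is dilatory by \cref{p:krn:dilator} and all four morphisms in the square are delta kernels (hence coisometric), and since the square is already assumed commutative, \cref{p:dildag-indep} tells us that the outer square is independent if and only if \(\delta_Y {\delta_X}^\dagger = {\delta_g}^\dagger \delta_f\) in \(\SBProb\). Unfolding almost sure equality, this means that for each \(V \in \Sigma_B\),
\[
    (\delta_Y {\delta_X}^\dagger)(V|a) = ({\delta_g}^\dagger \delta_f)(V|a)
\]
for \((X_* \Pr)\)-almost all \(a \in A\).

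First, I would identify the two sides of this equation with conditional distributions. By the discussion preceding this proposition, the left-hand side is a regular conditional distribution of \(Y\) given \(X\), so \((\delta_Y {\delta_X}^\dagger)(V|a) = \CPr[Y \mathbin{\in} V|X \mathbin{=} a]\) for \((X_* \Pr)\)-almost all \(a\). For the right-hand side, I would first observe that \({\delta_g}^\dagger = \delta_Y {\delta_Z}^\dagger\): commutativity of the square gives \(\delta_Z = \delta_g \delta_Y\), so \({\delta_Z}^\dagger = {\delta_Y}^\dagger {\delta_g}^\dagger\), and hence \(\delta_Y {\delta_Z}^\dagger = \delta_Y {\delta_Y}^\dagger {\delta_g}^\dagger = {\delta_g}^\dagger\) because \(\delta_Y\) is coisometric. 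Applying the same discussion with \(X\) replaced by \(Z\), the kernel \({\delta_g}^\dagger = \delta_Y {\delta_Z}^\dagger\) is a regular conditional distribution of \(Y\) given \(Z\), so \({\delta_g}^\dagger(V|c) = \CPr[Y \mathbin{\in} V|Z \mathbin{=} c]\) for \((Z_* \Pr)\)-almost all \(c\). Since \(\delta_f\) is a delta kernel, \(({\delta_g}^\dagger \delta_f)(V|a) = {\delta_g}^\dagger(V|fa)\), and as \(Z = fX\) pushes \((X_* \Pr)\) forward to \((Z_* \Pr)\), a \((Z_* \Pr)\)-null exceptional set in \(C\) pulls back along \(f\) to an \((X_* \Pr)\)-null set in \(A\). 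Thus the independence condition becomes
\[
    \CPr[Y \mathbin{\in} V|X \mathbin{=} a] = \CPr[Y \mathbin{\in} V|Z \mathbin{=} fa]
\]
for each \(V \in \Sigma_B\) and \((X_* \Pr)\)-almost all \(a\).

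It then remains to recognise this as conditional independence of \(X\) and \(Y\) given \(Z\). Pulling the displayed equation back along \(X\) and using \(Z = fX\) turns it into \(\CPr[Y \mathbin{\in} V|X] = \CPr[Y \mathbin{\in} V|Z]\) almost surely, for each \(V \in \Sigma_B\). Because \(\sigma(Z) \subseteq \sigma(X)\), conditioning on \(X\) coincides with conditioning on the pair \((X, Z)\), so this equality is exactly the standard characterisation that \(X\) and \(Y\) are conditionally independent given \(Z\). As \(B\) is standard Borel, \(\Sigma_B\) is countably generated, which lets the family of almost sure statements indexed by \(V\) be replaced by a single almost sure statement on a countable generating set and then extended to all of \(\Sigma_B\), reconciling the two quantifier orders (``for each \(V\), almost all \(a\)'' versus ``almost surely, for all \(V\)''). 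Since each step is an equivalence, this yields the desired biconditional.

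The main obstacle I anticipate is the measure-theoretic bookkeeping rather than any conceptual difficulty: regular conditional distributions and the morphisms of \(\SBProb\) are only defined up to almost sure equality, so care is needed to ensure that the kernel equation \(\delta_Y {\delta_X}^\dagger = {\delta_g}^\dagger \delta_f\), whose natural form quantifies ``for each \(V\), for almost every \(a\)'', really does match the conditional independence statement, whose natural form quantifies ``almost surely, for every \(V\)''. Countable generation of \(\Sigma_B\), together with the fact that pushforwards and pullbacks of null sets along \(f\) and \(X\) behave well here, is the key tool that makes these two forms interchangeable and keeps the exceptional sets under control.
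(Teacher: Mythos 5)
Your proposal is correct and takes essentially the same route as the paper's proof: reduce independence of the square to the almost-sure kernel equation \(\delta_Y {\delta_X}^\dagger = {\delta_g}^\dagger \delta_f\), compute \(({\delta_g}^\dagger \delta_f)(V|a) = {\delta_g}^\dagger(V|fa)\), use coisometry of \(\delta_Y\) to obtain \({\delta_g}^\dagger = \delta_Y {\delta_Z}^\dagger\), and identify the resulting equality of regular conditional distributions with conditional independence of \(X\) and \(Y\) given \(Z\). The only difference is that you spell out the quantifier-order and null-set bookkeeping explicitly, whereas the paper discharges this final identification by citing a standard reference (Klenke, Def.~12.20).
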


\begin{proof}
The outer square is independent exactly when  \(\delta_Y {\delta_X}^\dagger = {\delta_g}^\dagger \delta_f\) almost surely. Now, for all \(V \in \Sigma_B\) and all \(a \in A\),
\[
    ({\delta_g}^\dagger \delta_f)(V|a)
    \,=\, \int_C {\delta_g}^\dagger(V|z)\, \delta_f(dz|a)
    \,=\, {\delta_g}^\dagger(V|fa)
\]
Also, \({\delta_g}^\dagger = \delta_Y {\delta_Y}^\dagger {\delta_g}^\dagger = \delta_Y {\delta_Z}^\dagger\) almost surely. Hence the outer square is independent if and only if, for all \(V \in \Sigma_B\) and \((X_*P)\)-almost all \(a \in A\),
\[
    \CPr[Y \mathbin{\in} V|X \mathbin{=} a]
    \,=\, \CPr[Y \mathbin{\in} V|Z \mathbin{=} fa].
\]
This is one way~\cite[Def.~12.20]{klenke:probability-theory} to say that \(X\) and \(Y\) are conditionally independent given \(Z\).
\end{proof}

Finally, a \emph{coupling} of (or \emph{transport plan} between) two probability spaces \((A,\Pr_A)\) and \((B,\Pr_B)\) is usually defined~\cite[Def.~1.1]{villani} in one of two ways:
\begin{enumerate}
    \item as a pair of random variables \(X\) and \(Y\) on some probability space \((\Omega, \Pr)\) valued, respectively, in the measurable spaces \(A\) and \(B\), such that \(\Pr_A = X_*\Pr\) and \(\Pr_B = Y_* \Pr\); or,
    \item as a probability measure \(\Pr\) on the measurable space \(A \times B\) such that \(\Pr_A = (p_1)_* \Pr\) and \(\Pr_B = (p_2)_* \Pr\).
\end{enumerate}
A coupling of the first kind is essentially the same as a \textit{span} in \(\SBProbDet\), while one of the second kind is essentially equivalent to a \textit{relation} in \(\SBProbDet\). For a probability theorist, the two definitions above are more-or-less interchangeable; from the perspective of category theory, this is because every every span in \(\SBProbDet\) has a (strong epic, jointly monic) factorisation. The usual composition of couplings of the first kind~\cite[Ch.~1, Gluing Lemma]{villani} corresponds to span composition via independent pullback, while the usual composition of couplings of the second kind~\cite[Lem.~7.6]{villani-older} corresponds to composition of relations as defined in \cref{p:rel-exists}. Our main theorem, \cref{p:2-equivalence}, says that the dilatory dagger category \(\Rel(\SBProbDet)\) of standard Borel probability spaces and couplings is canonically isomorphic to the dilatory dagger category \(\SBProb\) of standard Borel probability spaces and measure-preserving Markov kernels.

\subsection{Markov categories with conditionals}
\label{s:general-probability-spaces}

\emph{Markov categories}~\cite[Def.~2.1]{fritz:synthetic} (see also~\cite[Def.~2.3]{chojacobs2019strings}) are a framework for synthetic probability theory modelled on the category of measurable spaces and Markov kernels. The present subsection outlines the Markov-categorical generalisation of the story in the previous subsection.

If a Markov category \(\C\) has \textit{conditionals}~\cite[Def.~11.5]{fritz:synthetic}, then the \textit{probability spaces} in \(\C\) and their \textit{measure-preserving morphisms}~\cite[Def.~13.8]{fritz:synthetic}, quotiented by \textit{almost sure equality}~\cite[Def.~13.1]{fritz:synthetic}, form~\cite[Prop.~13.9]{fritz:synthetic} a category \(\Prob(\C)\). Also every morphism in \(\Prob(\C)\) has a \textit{Bayesian inverse}~\cite[Rem.~13.10]{fritz:synthetic}, so \(\Prob(\C)\) is canonically a dagger category. This construction is a generalisation of the construction of \(\SBProb\) in \cref{s:standard-borel}. Indeed, the standard Borel measurable spaces and their Markov kernels form a Markov category \(\SBMeas\) with conditionals, and \(\SBProb \cong \Prob(\SBMeas)\).

A morphism in \(\Prob(\C)\) is coisometric exactly when~\cite[Prop.~2.5]{ensarguet2023ergodic} it is \textit{almost surely deterministic}~\cite[Def.~13.11]{fritz:synthetic} as a morphism in \(\C\).

\begin{proposition}
    \label{p:prob-dilatory}
	Let \(\C\) be a Markov category with conditionals. The dagger category \(\Prob(\C)\) is dilatory.
\end{proposition}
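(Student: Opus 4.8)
The plan is to mirror the construction of the dilator in the standard Borel case (\cref{p:krn:dilator}), translating each measure-theoretic operation into its Markov-categorical counterpart. Given a measure-preserving morphism $r \colon (A, \Pr_A) \to (B, \Pr_B)$ in $\Prob(\C)$, I would form the joint state
\[
    \mu \,=\, (1_A \otimes r)\, c_A\, \Pr_A
\]
on the object $A \otimes B$, where $c_A \colon A \to A \otimes A$ is the copy morphism of $\C$; this is the synthetic analogue of the semidirect product $\Pr_A \otimes r$. Let $\pi_A \colon A \otimes B \to A$ and $\pi_B \colon A \otimes B \to B$ be the two projections. Both are deterministic, hence coisometric in $\Prob(\C)$, and both are measure-preserving because the marginals of $\mu$ are $\Pr_A$ and $\Pr_B$. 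I claim that the span $(\pi_A, \pi_B)$, through $(A \otimes B, \mu)$, is a dilator of $r$.

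First I would verify that this span is a dilation, that is, $\pi_B \, {\pi_A}^\dagger = r$. Exactly as in \cref{l:bloom}, the Bayesian inverse ${\pi_A}^\dagger$ is a disintegration of $\mu$ along its $A$-marginal, so composing it with $\pi_B$ returns the conditional of the $B$-coordinate given the $A$-coordinate, which is $r$ by construction of $\mu$. This is a short string-diagram computation using the defining property of Bayesian inverses~\cite[Rem.~13.10]{fritz:synthetic}.

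For terminality, consider an arbitrary dilation of $r$: a span of almost surely deterministic measure-preserving morphisms $f \colon (\Omega, \Pr) \to (A, \Pr_A)$ and $g \colon (\Omega, \Pr) \to (B, \Pr_B)$ with $g f^\dagger = r$. The comparison morphism is the pairing $h = \langle f, g \rangle = (f \otimes g)\, c_\Omega$, which is almost surely deterministic because $f$ and $g$ are, and which satisfies $\pi_A h = f$ and $\pi_B h = g$ by the counit laws for $c_\Omega$. The crux is that $h$ is measure-preserving, i.e.\ $h_* \Pr = \mu$. This follows from the hypothesis $g f^\dagger = r$ together with the defining identity of the Bayesian inverse $f^\dagger$, namely $(1_A \otimes f^\dagger)\, c_A\, \Pr_A = \mathrm{swap}\,(1_\Omega \otimes f)\, c_\Omega\, \Pr$: substituting into $\mu = (1_A \otimes g)(1_A \otimes f^\dagger)\, c_A\, \Pr_A$ rewrites $\mu$ as the joint distribution of $f$ and $g$ under $\Pr$, which is precisely $h_* \Pr$. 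Uniqueness up to almost sure equality holds because an almost surely deterministic morphism into $A \otimes B$ is recovered, almost surely, from its two projections.

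The main obstacle will be the two almost-sure statements in the last paragraph: the measure-preservation $h_* \Pr = \mu$ and the uniqueness of the comparison morphism. In \cref{p:krn:dilator} these were handled by concrete measure-theoretic arguments (disintegrations and the gluing of couplings), whereas here they must be carried out within the calculus of conditionals, Bayesian inverses, and almost sure equality developed in~\cite[Sec.~11 and 13]{fritz:synthetic}. Everything else is a faithful, routine transcription of the standard Borel proof into string-diagrammatic form.
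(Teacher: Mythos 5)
Your proposal is correct and follows essentially the same route as the paper's proof: the dilator is the span of projections out of $(A \otimes B, \ang{1,r}\Pr_A)$, the dilation property comes from $\ang{1,r}$ being a Bayesian inverse of the first projection, and terminality is witnessed by the pairing $\ang{f,g}$, whose measure-preservation follows from the Bayesian-inverse identity for $f^\dagger$ together with $gf^\dagger = r$, and whose uniqueness follows from the fact that an almost surely deterministic morphism into $A \otimes B$ is almost surely recovered from its two marginals. The two ``obstacles'' you flag are handled in the paper exactly as you anticipate, by the calculus of almost sure equality and a citation for the marginal-recovery fact.
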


\begin{proof}[Proof sketch]
    Let \(\otimes\) denote the monoidal product of \(\C\). For all objects \(A\) and \(B\) of \(\C\), let \(\pi_1\) and \(\pi_2\) denote, respectively, the canonical projections \(A \otimes B \to A\) and \(A \otimes B \to B\). For all morphisms \(r \colon C \to A\) and \(s \colon C \to B\) in \(\C\), let \(\ang{r,s}\) denote the composite of the copy map \(C \to C \otimes C\) with \(r \otimes s \colon C \otimes C \to A \otimes B\).

    Fix a morphism \(r \colon (A, \Pr_A) \to (B, \Pr_B)\) in \(\Prob(\C)\). Since
    \[
        \pi_1 \ang{1,r}\Pr_A = \Pr_A
        \qquad\text{and}\qquad
        \pi_2 \ang{1,r}\Pr_A = r\Pr_A = \Pr_B,
    \]
    the morphisms \(\pi_1\) and \(\pi_2\) form a span
    \[
        \begin{tikzcd}[cramped]
            (A, \Pr_A)
            \&
            \paren[\big]{A \otimes B, \ang{1, r}\Pr_A}
                \arrow[l, "\pi_1" swap]
                \arrow[r, "\pi_2"]
            \&
            (B, \Pr_B)
        \end{tikzcd}
    \]
    in \(\Prob(\C)\). Its legs are coisometric because \(\pi_1\) and \(\pi_2\) are deterministic. Also \(\ang{1, r}\) is a Bayesian inverse of \(\pi_1\), so \(\pi_2{\pi_1}^\dagger\) is almost surely equal to \(\pi_2 \ang{1, r} = r\).\footnote{This factorisation of \(r\) is known as the \emph{bloom-shriek factorisation}~\cite[Def.~7.3]{fullwood:information-stochastic} of \(r\).}
    Hence the span is a dilation\footnote{Here, and throughout this document, the term \textit{dilation} refers to the concept in \Cref{d:dilation}. The notion of \textit{dilation} in a Markov category~\cite{fritz2022dilations} is related, but distinct.} of \(r\).

    To see that it is a \textit{terminal} dilation of \(r\), let
    \begin{equation}
        \label{e:Markov:dilation}
        \begin{tikzcd}[cramped]
            (A, \Pr_A)
            \&
            (\Omega, \Pr)
                \arrow[l, "f" swap]
                \arrow[r, "g"]
            \&
            (B, \Pr_B)
        \end{tikzcd}
    \end{equation}
    be another dilation of \(r\). Suppose that there is a coisometry \(h\) that makes the diagram
    \[
        \begin{tikzcd}[row sep=large]
            \&
            (\Omega, \Pr)
                \arrow[dl, "f" swap]
                \arrow[dr, "g"]
                \arrow[d, "h"]
            \&
        \\
            (A, \Pr_A) \&
            (A \times B, \ang{1,r}\Pr_A)
                \arrow[l, "\pi_1"]
                \arrow[r, "\pi_2" swap]
                \&
            (B, \Pr_B)
        \end{tikzcd}
    \]
    almost surely commutative. Then \(h\) is almost surely deterministic, so it almost surely equals \(\ang{\pi_1h,\pi_2h}\)~\cite[Prop~2.54]{fritz2023involutive}, which almost surely equals \(\ang{f,g}\). It remains to prove existence. Clearly \(\pi_1\ang{f,g} = f\) and \(\pi_2\ang{f,g} = g\). By the definition of \(f^\dagger\) and of almost sure equality of \(r\) and \(gf^\dagger\),
    \[
        \ang{f,g}\Pr = \ang{1, gf^\dag}\Pr_A = \ang{1,r}\Pr_A,
    \]
    so \(\ang{f,g}\) is a morphism \((\Omega, \Pr) \to (A \times B, \ang{1, r}\Pr_A)\) in \(\Prob(\C)\). This morphism is coisometric because it is a composite of the copy map, which is deterministic, with \(f \otimes g\), which is coisometric because \(f\) and \(g\) are coisometric.
\end{proof}

The characterisation (\Cref{p:krn:ind}) of independent squares in \(\SBProbDet\) also generalises to \(\Prob(\C)\). Coisometries in \(\Prob(\C)\) can be regarded~\cite[Def.~2.1]{fritz:synthetic} as random variables in \(\C\), and there is a suitable notion of conditional independence~\cite[Def.~12.1]{fritz:synthetic}.

\begin{proposition}
    \label{p:prob-independence}
    Let \(\C\) be a Markov category with conditionals. Consider a commutative diagram
	\[
	\begin{tikzcd}
		(\Omega,\Pr) \ar{r}{g} \ar[swap]{d}{f} \ar{dr}{h} \& (B,\Pr_B) \ar{d}{v} \\
		(A,\Pr_A) \ar[swap]{r}{u} \& (C,\Pr_C)
	\end{tikzcd}
	\]
    of coisometries in \(\Prob(\C)\). The outer square is independent if and only if \(f\) and \(g\) are conditionally independent given \(h\).
\end{proposition}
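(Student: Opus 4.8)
The plan is to reduce the statement to a one-line computation with Bayesian inverses, mirroring the concrete argument in \cref{p:krn:ind}, and only at the end to match the resulting equation with Fritz's definition of conditional independence. By \cref{p:dildag-indep}, since the displayed diagram is already commutative with diagonal \(h = uf = vg\), the outer square is independent in \(\Coisom(\Prob(\C))\) precisely when
\[
    g f^\dagger = v^\dagger u .
\]
Thus the entire proof amounts to rewriting this single equation and recognising it as conditional independence.

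First I would simplify the right-hand side. Since \(h = vg\), taking daggers gives \(h^\dagger = g^\dagger v^\dagger\); and because \(g\) is a coisometry, \(g g^\dagger = 1\), so \(g h^\dagger = g g^\dagger v^\dagger = v^\dagger\). Hence \(v^\dagger u = g h^\dagger u\), and the outer square is independent if and only if
\[
    g f^\dagger = g h^\dagger u .
\]
(Dually, \(h = uf\) and \(f f^\dagger = 1\) give \(u = h f^\dagger\), so the right-hand side could also be written \(g h^\dagger h f^\dagger\); keeping \(u\) explicit is cleaner.) All of these equalities are understood up to almost sure equality, which is harmless since Bayesian inverses are only defined almost surely and coisometries in \(\Prob(\C)\) are exactly the almost surely deterministic morphisms.

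Next I would read the two sides probabilistically. The Bayesian inverse \(f^\dagger\) is a conditional of \(\Omega\) given \(f\), so \(g f^\dagger\) is a conditional distribution of \(g\) given \(f\); likewise \(g h^\dagger\) is a conditional of \(g\) given \(h\), and precomposing with the deterministic map \(u\) (which carries the value of \(f\) to the value of \(h\), as \(h = uf\)) re-expresses it as a function of \(f\). So \(g f^\dagger = g h^\dagger u\) is precisely the assertion that the conditional of \(g\) given \(f\) depends on \(f\) only through \(h\)—the abstract form of the equality \(\CPr[g \mid f] = \CPr[g \mid h]\) reached in the proof of \cref{p:krn:ind}.

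The main obstacle is the final identification: showing that \(g f^\dagger = g h^\dagger u\) is equivalent to conditional independence of \(f\) and \(g\) given \(h\) in the sense of \cite[Def.~12.1]{fritz:synthetic}. For this I would unfold that definition in the presence of conditionals and exploit that \(h\) is a common deterministic coarsening of \(f\) and \(g\). Concretely, the joint state \(\langle f, g\rangle \Pr\) together with \(h\) is displayed by the copy map on \(C\) feeding conditionals of \(f\) and \(g\) given \(h\); using the defining property of the Bayesian inverses and determinism of \(h\), the factorisation condition of \cite[Def.~12.1]{fritz:synthetic} collapses to the single statement that the conditional of \(g\) given \(f\) factors through \(h\), i.e. \(g f^\dagger = g h^\dagger u\). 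Establishing this collapse diagrammatically—rather than appealing to a pointwise probabilistic fact as in \cref{p:krn:ind}—is the substantive step, and once it is in place both implications of the proposition follow immediately from the equivalence chain above.
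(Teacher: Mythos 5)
Your reduction is sound as far as it goes: by \cref{p:dildag-indep}, given commutativity, independence of the outer square is exactly the equation \(gf^\dagger = v^\dagger u\) in \(\Prob(\C)\), and your rewriting \(v^\dagger = gh^\dagger\) (from \(h = vg\) and \(gg^\dagger = 1\)) is a correct dagger-category computation; all these equalities hold on the nose because morphisms of \(\Prob(\C)\) are already almost-sure-equivalence classes. But this reduction is essentially a restatement of the hypothesis: the definition of independence in \(\Coisom(\Prob(\C))\) \emph{is} the equation \(gf^\dagger = v^\dagger u\). The entire content of the proposition is the equivalence of that equation with conditional independence in the sense of \cite[Def.~12.1]{fritz:synthetic}, and that is precisely the step you label ``the substantive step'' and do not carry out. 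Your sketch of it --- that the factorisation condition of \cite[Def.~12.1]{fritz:synthetic} ``collapses'' to the statement that the conditional of \(g\) given \(f\) factors through \(h\) --- is the claim to be proved, not an argument for it. Making it precise requires working with the joint state \(\ang{f,h,g}\Pr\), constructing its conditionals, and exploiting that \(h\) is a deterministic coarsening of both \(f\) and \(g\) (the synthetic analogue of passing between \(\CPr[g \mid f, h]\) and \(\CPr[g \mid f]\) uses \(h = uf\) with \(u\) deterministic), together with careful bookkeeping of almost-sure equality. None of this is routine symbol-pushing of the kind in your first two paragraphs, and the proposal gives no indication of how either implication would actually be verified diagrammatically.

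For comparison: the paper does not prove this equivalence from scratch either --- its proof is a citation to \cite[Prop.~9]{stein:RV}, which establishes exactly the correspondence between the Bayesian-inverse equation and synthetic conditional independence. So your proposal, after a correct but essentially trivial preamble, defers to an unexecuted plan the very result that the paper delegates to an external reference. To complete the proof you would need either to invoke that result explicitly, as the paper does, or to reproduce its argument in full; as written, the proposal stops exactly where the proposition begins.
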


\begin{proof}
	See \cite[Prop.~9]{stein:RV} (there the term \textit{independent} refers to its Definition~6).
\end{proof}

Finally, the canonical isomorphism \(\Rel(\Coisom(\Prob(\C))) \cong \Prob(\C)\) from \cref{t:equivalence} is a variant of the known isomorphism~\cite[Prop.~13.9~(b)]{fritz:synthetic} of the category of \textit{couplings}~\cite[Rem.~12.10]{fritz:synthetic} in \(\C\) with \(\Prob(\C)\).

\section{Hilbert spaces and contractions}
\label{s:contractions}

A function \(f \colon X \to Y\) between Hilbert spaces \(X\) and \(Y\) is called \textit{adjointable} if there is a function \(g \colon Y \to X\) such that \(\innerProd{y}{fx} = \innerProd{gy}{x}\) for all \(x \in X\) and all \(y \in Y\). There is at most one such function \(g\); it is called the \textit{adjoint} of \(f\) and denoted by \(f^\dagger\). Hilbert spaces and adjointable maps form a dagger category~\(\Hilb\). A morphism \(f \colon X \to Y\) in \(\Hilb\) is isometric if and only if \(\norm{fx} = \norm{x}\) for all \(x \in X\).

A \textit{contraction} \(f \colon X \to Y\) between Hilbert spaces \(X\) and \(Y\) is a linear map \(f \colon X \to Y\) such that \(\norm{fx} \leq \norm{x}\) for all \(x \in X\). All contractions are adjointable, and the adjoint of a contraction is again a contraction. Hence contractions form a wide dagger subcategory \(\Hilb_{\leq 1}\) of \(\Hilb\). The isometries in \(\Hilb_{\leq 1}\) are precisely the isometries in \(\Hilb\), so a cospan in \(\Hilb\) is a codilation in \(\Hilb\) if and only if it is a codilation in \(\Hilb_{\leq 1}\).

\begin{proposition}
The dagger category \(\Hilb_{\leq 1}\) is dilatory.
\end{proposition}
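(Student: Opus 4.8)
The plan is to construct the codilator of a contraction explicitly, mimicking the finite-dimensional construction in \cref{x:mat:dilator} but replacing the Cholesky factorisation with the \emph{defect operator} from the theory of minimal isometric dilations~\cite[Sec.~I.3]{nagyfoias}. By the paragraph preceding the proposition, it suffices to build, for each contraction $r \colon X \to Y$, an initial codilation in $\Hilb$. Since $r$ is a contraction, $1 - r^\dagger r$ is a positive contraction, so by the continuous functional calculus it has a positive square root $D_r = (1 - r^\dagger r)^{1/2} \colon X \to X$. Let $K = \overline{\Ran D_r}$ be the associated defect space, a closed subspace of $X$ and hence a Hilbert space, and set $Z = K \oplus Y$. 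I would take as the candidate codilator the cospan with legs $i_1 \colon X \to Z$ and $i_2 \colon Y \to Z$ given by $i_1 x = (D_r x, r x)$ and $i_2 y = (0, y)$.

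First I would check that this is a codilation. Both legs are bounded linear maps; $i_2$ is visibly isometric, and $i_1$ is isometric because $\norm{i_1 x}^2 = \norm{D_r x}^2 + \norm{r x}^2 = \innerProd{x}{(1 - r^\dagger r) x} + \innerProd{x}{r^\dagger r x} = \norm{x}^2$. Since $i_2^\dagger$ is the projection onto the $Y$\nobreakdash-summand, $i_2^\dagger i_1 = r$, as required. For initiality, let $(s_1, s_2)$ be any codilation of $r$ into a Hilbert space $Z'$; I must produce a unique isometry $e \colon Z \to Z'$ with $e i_1 = s_1$ and $e i_2 = s_2$. The forced choice is $e(D_r x, y) = (s_1 x - s_2 r x) + s_2 y$, so the crux is to make sense of the first summand. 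The key identity, obtained by expanding the norm and using $s_2^\dagger s_1 = r$ together with the isometry of $s_1$ and $s_2$, is
\[
    \norm{s_1 x - s_2 r x}^2 = \norm{x}^2 - \norm{r x}^2 = \innerProd{x}{(1 - r^\dagger r) x} = \norm{D_r x}^2.
\]
This shows that $D_r x \mapsto s_1 x - s_2 r x$ is well defined and isometric on $\Ran D_r$, hence extends by continuity to an isometry $e_1 \colon K \to Z'$.

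I would then set $e(d, y) = e_1 d + s_2 y$. A second short computation, again using $s_2^\dagger s_1 = r$, shows that $\Ran e_1 \perp \Ran s_2$, which makes $e$ isometric on all of $Z = K \oplus Y$; the identities $e i_1 = s_1$ and $e i_2 = s_2$ are then immediate. Uniqueness follows because the closed linear span of $\Ran i_1 \cup \Ran i_2$ is all of $Z$: it contains $i_1 x - i_2(r x) = (D_r x, 0)$ and all of $\Ran i_2 = \set{0} \oplus Y$, and the closure of $\Ran D_r$ is $K$, so any two isometries agreeing on both legs coincide. Finally, being an isometry in $\Hilb$, the map $e$ is an isometry in $\Hilb_{\leq 1}$, and the whole cospan is a codilation there too, so $(Z, i_1, i_2)$ is the required codilator. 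The main obstacle is the orthogonality step making $e$ globally isometric: respecting the direct-sum decomposition requires exactly the vanishing of the cross term $\innerProd{e_1(D_r x)}{s_2 y}$, which unwinds, via $s_2^\dagger s_1 = r$, to $\innerProd{r x}{y} - \innerProd{r x}{y} = 0$; everything else is the defect-operator bookkeeping standard in the theory of minimal isometric dilations.
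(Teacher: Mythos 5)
Your proof is correct, and the codilator you build is literally the same as the paper's: the paper takes the cospan $\pairBig{e}{r} \colon A \to D \oplus B$ and $\pairBig{0}{1} \colon B \to D \oplus B$, where $e$ is the corestriction of $\sqrt{1 - r^\dagger r}$ to the closure of its range, which is exactly your $(D_r x, rx)$ and $(0,y)$ into $K \oplus Y$. The difference lies entirely in how initiality is established. The paper observes that for any codilation $(X, f, g)$ one has $\copairBig{f}{g}^\dagger \copairBig{f}{g} = \copairBig{f}{g}^\dagger\copairBig{f}{g} = \begin{bmatrix} 1 & r^\dagger \\ r & 1\end{bmatrix} = \begin{bmatrix} e & 0 \\ r & 1\end{bmatrix}^\dagger \begin{bmatrix} e & 0 \\ r & 1\end{bmatrix}$, invokes Douglas' lemma~\cite[Thm.~1]{douglas:majorization-factorization-range} to produce the mediating isometry $h$ with $\copairBig{f}{g} = h \begin{bsmallmatrix} e & 0 \\ r & 1\end{bsmallmatrix}$, and gets uniqueness from the fact that $\begin{bsmallmatrix} e & 0 \\ r & 1 \end{bsmallmatrix}$ is epic in $\Hilb$. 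You instead construct the isometry by hand: the defect identity $\norm{s_1 x - s_2 r x}^2 = \norm{D_r x}^2$ makes $D_r x \mapsto s_1 x - s_2 rx$ well defined and isometric on $\Ran D_r$, continuity extends it to $K$, and the vanishing cross term $\innerProd{e_1(D_r x)}{s_2 y} = 0$ glues it with $s_2$ into a global isometry, with uniqueness from density of the span of the two ranges. Your argument is essentially an inlined proof of the special case of Douglas' lemma that the paper cites (Douglas' proof proceeds by exactly this well-definedness-plus-continuous-extension device), so it is more self-contained and elementary, at the cost of length; the paper's version is shorter and makes the structural reason for the factorisation---equality of the two Gram operators---more visible. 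Both arguments also correctly handle the reduction from $\Hilb_{\leq 1}$ to $\Hilb$ via the remark that the two categories have the same isometries.
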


The proof is inspired by the proof that every contraction in a Douglian pre-Hilbert \(*\)-category has a dilator~\cite[Cor.~7.17]{dimeglio:rcategories}.

\begin{proof}
Consider a morphism \(r \colon A \to B\) in \(\Hilb_{\leq 1}\). Since \(r\) is a contraction, the morphism \(1 - r^\dagger r\) is positive. Let \(e \colon A \to D\) be the corestriction of \(\sqrt{1 - r^\dagger r}\) to the closure in \(A\) of its range. Then \(e\) is epic in \(\Hilb\) and \(e^\dagger e = 1 - r^\dagger r\). The cospan
\[
    \begin{tikzcd}[cramped, sep=huge]
        A
            \arrow[r, "{\begin{bsmallmatrix}e\vphantom{0}\\r\vphantom{1}\end{bsmallmatrix}}"]
            \&
        D \oplus B
            \&
        B
            \arrow[l, "{\begin{bsmallmatrix}0\vphantom{e}\\1\vphantom{r}\end{bsmallmatrix}}" swap]
    \end{tikzcd}
\]
is a codilation of \(r\) because
\[
    \begin{bmatrix}
        e & 0\\
        r & 1
    \end{bmatrix}^\dagger
    \begin{bmatrix}
        e & 0\\
        r & 1
    \end{bmatrix}
    = \begin{bmatrix}
        1 & r^\dagger\\
        r & 1
    \end{bmatrix}.
\]
To show that it is a codilator of \(r\), consider another codilation \((X, f, g)\) of \(r\). Then
\[
    \begin{bmatrix}
        f&
        g
    \end{bmatrix}^\dagger
    \begin{bmatrix}
        f &
        g
    \end{bmatrix}
    =
    \begin{bmatrix}
        1 & r^\dagger\\
        r & 1
    \end{bmatrix}
    =
    \begin{bmatrix}
        e & 0\\
        r & 1
    \end{bmatrix}^\dagger
    \begin{bmatrix}
        e & 0\\
        r & 1
    \end{bmatrix}.
\]
By Douglas' lemma~\cite[Thm.~1]{douglas:majorization-factorization-range}, there is an isometry \(h \colon D \oplus B \to X\) such that
\[
    \begin{bmatrix}
        f &
        g
    \end{bmatrix}
    = h
    \begin{bmatrix}
        e & 0\\
        r & 1
    \end{bmatrix},
\]
that is, such that the following diagram is commutative.
\[
    \begin{tikzcd}[sep=large]
        \&
    X
        \&
    \\
    A
        \arrow[r, "{\begin{bsmallmatrix}e\vphantom{0}\\r\vphantom{1}\end{bsmallmatrix}}" swap]
        \arrow[ur, "f"]
        \&
    D \oplus B
        \arrow[u, "h"{pos=0.4}]
        \&
    B
        \arrow[l, "{\begin{bsmallmatrix}0\vphantom{e}\\1\vphantom{r}\end{bsmallmatrix}}"]
        \arrow[ul, "g" swap]
    \end{tikzcd}
\]
Uniqueness of \(h\) follows from the fact that \(\begin{bsmallmatrix}
    e & 0\\
    r & 1
\end{bsmallmatrix}\) is epic in \(\Hilb\).
\end{proof}

It follows that the category \(\Hilb_1\) of Hilbert spaces and isometries is a mono-coregular co-independence category. It is interesting to note that the category \(\Hilb\) is itself a coregular category. Every regular subobject in \(\Hilb\) has an isometric representative, so the category \(\Hilb_1\) is \textit{almost} the same as the wide subcategory of regular monomorphisms in \(\Hilb\). From this perspective, this example is somewhat similar to the dual of the ones discussed in \cref{s:bitotal-relations}.

\end{document}